\newcommand{\Z}{\mathbb{Z}}
\newcommand{\R}{\mathbb{R}}
\newcommand{\C}{\mathbb{C}}
\newcommand{\U}{\mathbb{U}}
\newcommand{\E}{\mathbb{E}}
\newcommand{\mc}{\mathcal}
\newcommand{\mb}{\mathbb}
\newcommand{\mf}{\mathfrak}
\newcommand{\eps}{\varepsilon}
\newcommand{\ind}{{\bf 1}}
\newcommand{\wt}{\widetilde}
\newcommand{\ra}{\rangle}
\newcommand{\la}{\langle}
\newcommand{\VV}{{\mathcal V}}
\newcommand{\vertiii}[1]{{\left\vert\kern-0.25ex\left\vert\kern-0.25ex\left\vert #1 
    \right\vert\kern-0.25ex\right\vert\kern-0.25ex\right\vert}}
\DeclareMathOperator{\Cov}{Cov}
\DeclareMathOperator{\Var}{Var}
\DeclareMathOperator{\Div}{div}
\DeclareMathOperator{\sgn}{sgn}
\DeclareMathOperator{\osc}{osc}
\newcommand{\m}{\mathbf{m}}
\newtheorem*{thmr}{Theorem}
\newtheorem{thm}{Theorem}
\newtheorem{Def}[thm]{Definition}
\newtheorem*{Defr}{Definition}
\newtheorem{Prop}[thm]{Proposition}
\newtheorem*{Propr}{Proposition}
\newtheorem{Lem}[thm]{Lemma}
\newtheorem{Cor}[thm]{Corollary}
\def\Xint#1{\mathchoice
{\XXint\displaystyle\textstyle{#1}}%
{\XXint\textstyle\scriptstyle{#1}}%
{\XXint\scriptstyle\scriptscriptstyle{#1}}%
{\XXint\scriptscriptstyle\scriptscriptstyle{#1}}%
\!\int}
\def\XXint#1#2#3{{\setbox0=\hbox{$#1{#2#3}{\int}$ }
\vcenter{\hbox{$#2#3$ }}\kern-.6\wd0}}
\def\dashint{\Xint-}
\numberwithin{equation}{section}
\title{Metric growth dynamics in Liouville quantum gravity}
\author{Julien Dub\'edat  \thanks{Department of Mathematics, Columbia University, 2990 Broadway, New York, NY 10027, USA.}  \and Hugo Falconet \thanks{Courant Institute, New York University, 251 Mercer street, New York, NY 10012, USA.}}
\date{\today}
\begin{document}

\maketitle

\begin{abstract}
We consider the metric growth in Liouville quantum gravity (LQG) for $\gamma \in (0,2)$. We show that a process associated with the trace of the free field on the boundary of a filled LQG ball is stationary, for every $\gamma \in (0,2)$.  The infinitesimal version of this stationarity combined with an explicit expression of the generator of the evolution of the trace field $(h_t)$ provides a formal invariance equation that a measure on trace fields must satisfy.  When considering a modified process corresponding to an evolution of LQG surfaces, we prove that the invariance equation is satisfied by an explicit $\sigma$-finite measure on trace fields. This explicit measure on trace fields only corresponds to the pure gravity case. On the way to prove this invariance, we retrieve the specificity of both $\gamma = \sqrt{8/3}$ and of the LQG dimension $d_{\gamma} = 4$. In this case, we derive an explicit expression of the (nonsymmetric) Dirichlet form associated with the process $(h_t)$ and construct dynamics associated with its symmetric part.
\end{abstract}

\tableofcontents

\section{Introduction}

\subsection{Liouville quantum gravity}

Gaussian free fields (GFF) appear as the universal scaling limit of a large class of random discrete surfaces \cite{Kenyon-GFF, Naddaf-Spencer} and play a fundamental role in mathematical physics. Formally,  they are measures on fields $h$ defined on a domain $D$ such that
\begin{equation}
\label{eq:path-integral-GFF}
\rho(dh) \propto \exp \left( - \sigma^{-2} \int_D | \nabla h |^2 d\lambda \right) \mc{D}h
\end{equation}
where $\mc{D}h$ is the formal Lebesgue measure on fields, $\sigma >0$ and $\lambda$ is the Lebesgue measure on $D$. In two dimensions,  they belong to the class of $\log$-correlated Gaussian fields for which the covariance kernel is given (up to a multiplicative constant) by $\E(h(x) h(y)) = - \log | x-y| + O(1)$ and are conformally invariant measures. Furthermore, the field has an important domain Markov property, which plays a role here.

Now, suppose given a metric tensor $ds^2$ on a two dimensional Riemannian manifold $X$. Then, under mild assumptions, \textit{locally}, it can be represented using isothermal coordinates by $ds^2 = \rho (du^2 + dv^2)$ for some smooth $\rho >0$ and the associated conformal factor $\phi$ is given by $\rho = e^{\phi}$. Using the complex coordinate $z= u + i v$, the volume form and distance function are locally given by
$$
e^{\phi(z)} d^2 z \quad  \quad \text{and} \quad \inf_{\pi : x \to y} \int_{\pi} e^{\frac{\phi}{2}} ds,
$$
where the infimum is taken over all piecewise continuously differentiable paths $\pi$ with endpoints $x$ and $y$. In what follows, we are interested in the case where the conformal factor $\phi$ is given by $\gamma h$, where $h$ is a Gaussian free field and $\gamma \in (0,2)$. Since $h$ is a random Schwartz distribution with negative regularity, the exponential $e^{\gamma h}$ only makes sense formally.  The volume form and distance function are then given by
\begin{equation}
\label{eq:Forme-LQG}
 ``e^{\gamma h(z)} d^2 z" \quad\quad \text{and} \quad  ``\inf_{\pi : x \to y}  \int_{\pi} e^{\frac{ \gamma h}{d_{\gamma}}} ds",
\end{equation}
where $d_{\gamma} > 2$ is the almost sure Hausdorff/Minkowski dimension of the associated metric measure space. Liouville quantum gravity (LQG) is the random geometry associated with this metric measure space. It was originally introduced in the physics literature in 1981 by Polyakov  \cite{P81}. In \cite{DS11}, Duplantier and Sheffield gave a rigorous meaning to the volume form $``e^{\gamma h}  d^2z"$ by taking the limit
\begin{equation}
\label{eq:LQG-measure}
\mu_h(dz) =  \lim_{\eps \to 0} \eps^{\gamma^2/2} e^{\gamma h_\eps(z)} \ dz
\end{equation}
where $h_{\eps}(z)$ is the $\eps$-circle average approximation of the field. This is a special case of Gaussian multiplicative chaos \cite{Kahane85, Shamov16, Berestycki17} (a generalization to $\log$-correlated Gaussian fields in any dimension). Moreover, they proved the following coordinate change formula: if $f: D \to D'$ is a conformal map then, almost surely, the push-forward of the measure $\mu_h$ by $f$ is given by
\begin{equation}
\label{eq:measure-cov}
f_{*} \mu_h = \mu_{ h\circ f^{-1} + Q \log |(f^{-1})'|}, 
\end{equation}
where
\begin{equation}
\label{def:Q}
Q = \frac{\gamma}{2} + \frac{2}{\gamma}.
\end{equation}

More recently, for every $\gamma \in (0,2)$, the distance function associated with LQG was constructed in \cite{DDDF19, GM19uniqueness}. It is proved to be the scaling limit of a similar approximation scheme as \eqref{eq:LQG-measure}, called Liouville first passage percolation (LFPP), but with a specific mollification procedure (the heat kernel), which we denote by $h_{\eps}^*$ and a different parameter in the exponential,
\begin{equation}
\label{def:xi}
\xi = \frac{\gamma}{d_{\gamma}},
\end{equation}
where $d_{\gamma}$ was shown to exist before the construction of the distance in  \cite{DZZ18, DG18}. Then, for appropriate normalizing constants $\lambda_{\eps}$ satisfying $\lambda_{\eps} = \eps^{1-\xi Q + o(1)}$, the $\gamma$-LQG metric $D_h$ is given by the following limit
\begin{equation}
\label{eq:Renormalization-Metric}
D_h(x,y) = \lim_{\eps \to 0} \lambda_{\eps}^{-1} \inf_{\pi : x \to y} \int_{\pi} e^{\xi h_\eps^*} ds.
\end{equation}
It is almost surely bi-H\"older with respect to the Euclidean distance and therefore induces the Euclidean topology. However, it is almost surely not a Riemannian metric. It also satisfies, for every conformal map $f : D \to D'$,
\begin{equation}
\label{eq:metric-cov}
f_{*} D_h = D_{ h\circ f^{-1} + Q \log |(f^{-1})'|}. 
\end{equation}
Given \eqref{eq:measure-cov} and \eqref{eq:metric-cov} it is natural to consider two pairs $(D,h)$ and $(D' , h')$ related by a conformal map as
\begin{equation}
\label{eq:conformal-covariance}
h' = h\circ f^{-1} + Q \log |(f^{-1})'|
\end{equation}
as different parametrizations of the same \textit{LQG surface}. This coordinate change formula, sometimes referred to as ``LQG coordinate change" says that this metric measure space depends only on the quantum surface, not on the particular choice of parametrization.

Fine properties of the geodesics and metric balls of this metric measure space have been the focus of an intense direction of research recently. Regarding geodesics, Gwynne and Miller proved a ``confluence of geodesics" phenomenon in \cite{GM19confluence} and geodesic networks were studied in \cite{Gwynne-Geodesics}. Concerning metric balls, the LQG volume of LQG balls of radius $r$ in a compact set was proved to be of order $r^{d_\gamma +o(1)}$ in \cite{AFS20}, a contrast with the LQG volume of Euclidean balls where the exponent depends on location of the center of the ball.  An argument of Miller and Sheffield  \cite[Proposition 2.1]{TBM-charac} shows that the boundary of a filled LQG ball is a Jordan curve.  Gwynne \cite{Gwynne-Boundary} established a formula for the Hausdorff dimension of the boundary of LQG balls in terms of $\gamma$ and $d_{\gamma}$, conditionally on a zero-one law type result which was subsequently proved in \cite{GPS20}. Stronger statements of many of these results have been obtained for the specific value $\gamma = \sqrt{8/3}$  \cite{Angel-geodesics, MQ-strong, LG-measure, LG-stars}, for which a connection with the Brownian map exists (and is discussed below). Here, we are interested in the growth process associated with LQG metric balls for every $\gamma \in (0,2)$.

\subsection{QLE$(8/3,0)$ and the $\sqrt{8/3}$-LQG metric}

\paragraph{Quantum Loewner Evolutions.}  Schramm-Loewner evolutions (SLE) are a one-parameter family of random non-self-crossing and conformally invariant curves in the plane, usually indexed by the parameter $\kappa$, describing the roughness of the curves. They were introduced by Schramm \cite{Schramm} as a combination of stochastic calculus and of Loewner's  theory of the evolution of planar slit domains. This family describes the scaling limit of interfaces of some discrete statistical physics models at criticality, such that percolation (with SLE$_6$) and the Ising model (with SLE$_3$). Chordal SLEs are defined in simply connected domains of the complex plane, with prescribed starting point and endpoint on the boundary. Radial SLE curves have one fixed boundary point and one fixed interior point.  A relation between level lines of the GFF and SLE$_4$ was established in \cite{SS-dgff} and further relations between SLE and the GFF have been studied in \cite{Dub-SLE-GFF, Hadamard-Izyurov, MS-imaginery}.

In \cite{MS16}, Miller and Sheffield constructed a family of random growth models, called Quantum Loewner Evolutions (QLE),  describing an evolution of triple $(K_t, \nu_t, \mathfrak{h}_t)$ where  $(K_t)$ are compact sets of the unit disk growing inward ($K_0$ being the unit circle), $(\nu_t)$ are probability measures on the circle and $(\mathfrak{h}_t)$ are  harmonic functions  on the disk. A natural growth model starting from the origin grows outward toward infinity, but by applying a conformal inversion the growth target becomes the origin. Roughly speaking, the construction of QLE relies on using SLE$_\kappa$ as an exploration method of a quantum surface ($\Phi, \mb{D}$) where $\Phi$ is a specific GFF with an $\alpha$-singularity at the origin, namely $\alpha \log | \cdot |^{-1}$ where $\alpha = \frac{\kappa+6}{2\sqrt{\kappa}}$ for $\kappa > 1$, and $\mb{D}$ is the unit disk.

More precisely, it is constructed as a subsequential limit of a continuous approximation. An SLE$_{\kappa}$ is grown starting from a boundary point of the unit circle $\mb{U}$ sampled according to a specific boundary LQG measure ($e^{- \sqrt{\kappa}^{-1} \mf{h}_0^{\delta}}$) and grows inward, targeting the origin for $\delta$ units of capacity time. $(K_t^{\delta})$ is the associated growing hull, for $t < \delta$. It is associated with a family of conformal maps $g_t^\delta : \mb{D} \backslash K_t^\delta \mapsto \mb{D}$ such that $g_t^\delta(0)=0$ and $(g_t^\delta)'(0) = e^t$. $\mf{h}_t^\delta$ represents the harmonic extension of the (formal) values of the field $\Phi$ on the boundary of the component of $\mb{D} \backslash K_t^\delta$ containing $0$, when mapped back to $\mb{D}$ with $g_t^\delta$ and using an LQG change of coordinates.  Namely, $\mf{h}_t^\delta$ is the harmonic part of $\Phi \circ (g_t^\delta)^{-1} + Q \log |((g_t^{\delta})^{-1})'|$. Here,  $Q = 2/\gamma + \gamma/2$ with $\gamma = \min (\sqrt{\kappa}, \sqrt{16/\kappa})$. At ``time" $\delta$, one uses the LQG coordinate change formula and this process, namely sampling a boundary point (now with $e^{- \sqrt{\kappa}^{-1} \mf{h}_\delta^\delta}$), then an SLE$_{\kappa}$ and uniformizing after $\delta$ units of capacity time, is then iterated. This procedure is described in Figure \ref{fig:approx}. In this approximation, the image $\xi_t^\delta = g_t^\delta(\gamma_t^\delta)$ of the the tip $\gamma_t^\delta$ of the SLE$_\kappa$  is a Brownian motion on the unit circle. This QLE approximation is generated by a sequence of independent Brownian motions $(\xi_{kt}^\delta)_{t \in [0,\delta)}$. 

\begin{figure}[ht]
\centering
\includegraphics[scale=1]{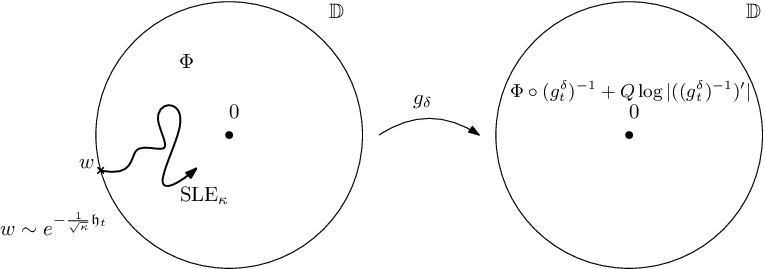}
\caption{QLE approximation}
\label{fig:approx}
\end{figure}

The key observation is that this process is stationary in (capacity) time, namely
$$
\Phi \circ (g_t^\delta)^{-1} + Q \log |((g_t^{\delta})^{-1})'| \overset{(d)}{=} \Phi
$$
seen as field modulo constant. In particular, the harmonic extension $\mf{h}_t^{\delta}$ of the boundary values of the field  is stationary in $t$. Furthermore,  for every $\delta \in (0,1)$,  for fixed $t \geq 0$, the distribution of $\mf{h}_t^{\delta}$ is explicit.

This gives in fact an approximation $(\zeta_t^{\delta}, g_t^{\delta}, \mf{h}_t^{\delta})$ where the boundary probability measure $\zeta_t^{\delta}$ only consists of a Dirac mass at a point sampled from $e^{- \sqrt{\kappa}^{-1} \mf{h}_t^{\delta}}$ at times $t=0, \delta, 2\delta$, ... and a Dirac mass at the location of the running Brownian motion at intermediate times. Miller and Sheffield proved in \cite{MS16} that as $\delta \to 0$, there exist subsequential limits $(\zeta_t, g_t, \mf{h}_t)$ that satisfy the triangle of maps given in Figure \ref{fig:triangle}, with $\zeta_t = \nu_t$. For a precise statement of this tightness result, we refer the reader to \cite[Section 6]{MS16}.

\begin{figure}[ht]
\centering
\includegraphics[scale=1]{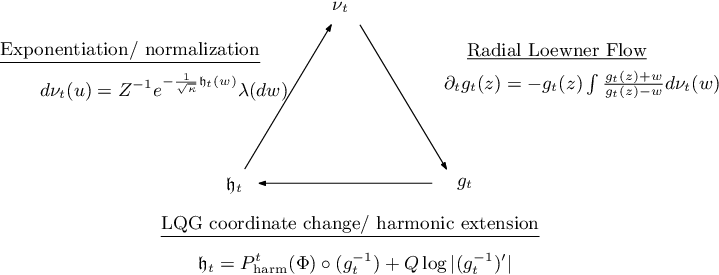}
\caption{QLE dynamics. The map from $\mf{h}_t$ to $\nu_t$ is defined for Lebesgue typical times.}
\label{fig:triangle}
\end{figure}
When $\kappa = 6$, this process was then used to construct the $\sqrt{8/3}$-LQG metric in \cite{MS15b, MS16a, MS16b} without using the renormalization procedure \eqref{eq:Renormalization-Metric}.  We discuss this in further details below.

\paragraph{$\sqrt{8/3}$-LQG metric.} We introduce here additional background and references to the literature associated with the special case $\gamma = \sqrt{8/3}$ but none of this is used anywhere else below.

The metric associated with the specific value $\gamma = \sqrt{8/3}$ was constructed earlier and in a completely different manner. Underlying its construction is the relation between SLE and quantum surfaces pioneered by Sheffield in \cite{Sheffield2016}. Subsequent works with Duplantier and especially with Miller culminated in the construction of this metric and its equivalence with the Brownian map (the scaling limit of uniform random planar maps on the sphere \cite{LeGall13, Miermont13}). To achieve this, they developed a theory of quantum surfaces (variant of the free field $h$ with an associated volume form $e^{\gamma h}$ but without a metric) including some surfaces without boundary: the quantum cone $\mb{C}$ and the quantum sphere $S^2$, and some surfaces with a boundary: the quantum wedge $\mb{H}$ and the quantum disk $\mb{D}$. The later ones are additionally equipped  with a boundary length measure $e^{\frac{\gamma}{2} h}$. These surfaces have nice geometric interpretations (e.g., the cone is the local limit of an LQG surface near a $\gamma$-typical point and the sphere can be obtained by ``pinching off a bubble" from a quantum cone). Furthermore, they obtained an explicit description of the associated fields.

Quantum surfaces have their equivalent versions in the Brownian geometry describing the scaling limits of uniform random planar maps, seen as metric measure spaces (graph distance, counting measure) where the convergence is w.r.t. the Gromov-Hausdorff-Prokhorov topology. The analogue objects are the Brownian plane, sphere, half-plane and disk \cite{LeGalltakagi}. They also have another and equivalent interpretation in terms of Liouville Quantum Field Theory (LQFT)  \cite{DKRV16, AHS17}. This latter perspective generalizes to quantum surfaces on complex tori \cite{HRV18} and on higher genus surfaces \cite{GRV16b}. This different approach was recently used to prove integrability results conjectured from Conformal Field Theory \cite{DOZZ, Bootstrap}.

The relations between SLE and LQG can be motivated by their discrete counterparts \cite{ang03, Angel-Schramm, Angel-Curien, Curien-Glimpse}. When ``gluing" two independent uniform infinite half plane quadrangulations, one gets a new uniform infinite random quadrangulation together with a self avoiding path. The associated scaling limit corresponds to $\sqrt{8/3}$-LQG with an independent SLE$_{8/3}$ \cite{SAW-LQG}. One can also consider site percolation on a uniform infinite half plane triangulation and the associated cluster separating path. This gives rise to a non-self-crossing curve which cuts some ``holes". The distribution of the surfaces on these holes is explicit and given by independent uniform triangulations of the disk with prescribed boundary lengths. The associated scaling limit corresponds to $\sqrt{8/3}$-LQG with an independent SLE$_{6}$ and the surfaces cut out form a Poisson point processes of quantum disks associated with an explicit intensity measure on boundary lengths. We emphasize  that the field on the boundary of  the surfaces cut-out is understood well enough to make sense of the boundary length $e^{\frac{\gamma}{2}h}$. In \cite{sphere}, Miller and Sheffield describe the exploration of a whole-plane SLE$_6$ from one $\sqrt{8/3}$-typical point to another one on the $\sqrt{8/3}$-LQG sphere.  They rigorously proved that the holes cut out by an SLE$_6$ are given by a Poissonian collection of quantum disks, that the law of the region which contains the target point of an SLE$_6$ is equal to that of a quantum disk weighted by its quantum area and that the law of the tip of an SLE$_6$ is distributed according to the quantum length measure of the boundary of the unexplored region.

This last point implies that a certain ``reshuffling operation" is the analogue of the Eden model on a $\sqrt{8/3}$-LQG sphere. Roughly speaking, first passage percolation on uniform triangulation induces at large scale a distance proportional to the graph distance \cite{Curien-LeGall}. As such, one expects that macroscopic behaviour of first passage percolation on $\sqrt{8/3}$-LQG  induces the $\sqrt{8/3}$-LQG metric. Let us focus on the Eden model on a uniform infinite planar triangulation.  Start the Eden exploration from a root triangle. Pick an edge uniformly on the boundary and explore the adjacent triangle. Iterate this process. The holes cut out are conditionally independent triangulations of the disk given their boundary lengths. The boundary length of the infinite component is a Markov chain. This is similar to the percolation model described above (in term of the distribution of the surfaces cut out and of the boundary length process) but with ``tip-forgetting" (in the Eden case, the growth occurs all along the boundary, in the percolation case the exploration follows the interface separating percolation clusters). The analogue construction in the continuum is a reshuffling of SLE$_6$ curves. With the tip of the SLE$_6$ distributed according to the quantum length measure of the boundary of the unexplored region, it is natural to ``reshuffle" the SLE$_6$ exploration, namely to sample a point according to the boundary length measure and an SLE$_6$ from this point for a certain amount of time $\delta$ and to repeat this process. Letting $\delta \to 0$ gives a growth process where the growth naturally occurs all along the boundary: this is the QLE$(8/3)$ process. The time parametrization used to define the QLE process here is different than the capacity time mentioned above and is intrinsic to the LQG surface, they call it the ``quantum natural time parametrization". The distance from an LQG typical point $x$ to another LQG typical point $y$ is then defined to be the amount of time it takes QLE to grow from $x$ to $y$. The proof that this is almost surely equal to the amount of time it takes QLE to grow from $y$ to $x$ is the bulk of \cite{MS15b}. Finally, they use another parametrization: with $X_t$ the LQG length of the complementary component of the QLE, they introduce the ``quantum distance time"  such that if a point is reached at ``quantum natural time" $t$, its ``quantum distance time" is $\int_0^t \frac{du}{X_u}$.  The quantum natural parametrization is the  analogue of parameterizing a percolation growth by the number of edges traversed. This time change is the analogue of adding edges at a rate proportional to boundary length.  Once this metric is constructed, they proved a characterization of the Brownian map \cite{TBM-charac} and used it to conclude on the equivalence between $\sqrt{8/3}$-LQG and the Brownian map. 

We refer the reader to the surveys \cite{Miller-ICM, MOT-Survey, Sheffield-survey} for additional details and an extensive list of references.

\subsection{Infinitesimal generator, invariance equation and pure gravity Dirichlet form}

We present here informal statements of our main results and outline the main proof ideas. The precise statements require the introduction of additional notations and are postponed to the relevant sections of the paper. We explain the nature and meaning of these statements in this introduction.

\paragraph{Stationary growth process, generator of the LQG metric growth, and motivations.}  We consider a  full plane GFF $\phi_{\C}$, seen as $\sigma$-finite measure ($\phi_{\mb{C}} =\phi_{\mb{C}}^0 + c$ , where $\phi_{\mb{C}}^0$ is a whole-plane GFF modulo constant and c is a “sample” from the Lebesgue measure on $\mb{R}$), $B_t$ the geodesic ball centered at $0$ of radius $e^t$ for a $(\xi,Q)$ metric and denote by $B_t^{\bullet}$ the filling of $B_t$ (the union of $\overline{B_t}$ and the set of points in the complex plane $\mb{C}$ which are disconnected from $\infty$ by $\overline{B_t}$). Furthermore, we denote by $\hat g_t$  the map uniformizing the complement of $B_t^{\bullet}$ with $\hat g_t(z)=az^{-1}+o(1)$ for $z\gg 1$, $a>0$. Using the LQG change of coordinates and the Markov property of the free field,  we decompose 
\begin{equation}
\label{eq:beta=0}
\phi_\C\circ \hat g_t^{-1}+Q\log|(\hat g_t^{-1})'| = \phi_t+\bar{h}_t,
\end{equation}
where $\phi_t$ is a Dirichlet GFF on $\mb{D}$ and $\bar{h}_t$ is an independent harmonic function on $\mb{D}\backslash \{ 0\}$. We are particularly interested in the process $\bar{h}_t$. We also consider the following decomposition
\begin{equation}
\label{eq:beta/=0}
\phi_\C\circ \hat g_t^{-1}+Q\log|(\hat g_t^{-1})'| - t/\xi = \phi_t+\tilde{h}_t.
\end{equation}
The processes $(\bar{h}_t)$ and $(\tilde{h}_t)$ encode the boundary values of the restriction of the GFF on the boundary of metric balls after uniformization, or their harmonic extension. In this paper, we are interested in their dynamics. The growth process is naturally measurable with respect to $\phi_{\mb{C}}$ (as the metric is) and the field $\phi_t$ represents the randomness of the domain to be explored after time $e^{t}$, it therefore encodes the noise driving these processes.

In Section \ref{sec:stationarity-martingales}, in particular in Proposition \ref{prop:stat}, we will see that $(\phi_t,\tilde{h}_t)$ in \eqref{eq:beta/=0} is stationary (as a $\sigma$-finite measure). We use a uniform notation with general parameters $(\alpha, \chi, \beta)$ corresponding to the decomposition $\phi_\C\circ \hat g_t^{-1}-\chi \log|(\hat g_t^{-1})'| - \beta t = \phi_t+h_t + \alpha \log | \cdot |$ where $h_t$ is a harmonic function on $\mb{D}$. Above, the $\alpha$-singularity is given by $\alpha = -2 Q$. When considering the LQG metric associated with the field $\phi_\C + \omega \log | \cdot|$, then $\alpha = -2Q - \omega$. The relationship between $h$ and $\tilde{h}$ is given by $\tilde{h}_t = h_t  + \alpha \log |\cdot|$ with $\alpha = -2Q -\omega$.


In order to describe the generator of the process $(h_t)$, we  consider martingales associated to this growth process (with the natural filtration of the growth).  The expression is obtained by using the vanishing of the drift of local martingales and the calculation involves the variation of the Green function of perturbed domains (Hadamard variation, Lemma \ref{lem:hadamard}) and variation of conformal maps (Loewner equation).  The expression  involves various integrals over the disk as we integrate against test functions in the bulk. It also includes a boundary measure $\mu$ (i.e., a Borel measure on the unit circle) which is the driving measure of the Loewner-Kufarev equation. Here, since we do not use the parametrization by capacity but rather a parametrization using LQG distances, this is a priori not a probability measure.

We emphasize that we will see the trace field $h = h_0$ as a harmonic function. In particular, natural test functionals are of the form $\int_{\mb{D}} h f d\lambda$ for smooth functions $f$ with compact support in the bulk. These can be seen as testing against the trace field $h$ on $\mb{U}$ via \eqref{eq:adjoint-int} below. An associated natural but rather weak topology is that of local uniform convergence  in $\mb{D}$. The following definition makes sense for a general  Borel measure $\mu$ on $\mb{U}$.

\begin{Defr}[Definition \ref{def:generator}] We consider functionals of harmonic functions $h$ that take the form
$F(h)=\psi(\int_{\mb D}f_1 h d\lambda,\dots, \int_{\mb D}f_n h d\lambda)$, where $\psi \in C_c^{\infty}(\mb{R}^n)$, $f_i \in  C_c^{\infty}(\mb{D} )$. For any such $f_i$, we set $p_i := f_i^* := H^* f_i$ where $H^*$ is the adjoint of the Poisson kernel (see Section \ref{sec:notation}). We define an operator ${\mc L}_{\alpha, \chi, \beta}$ on the set of test functionals of the form 
\begin{equation}
{\mc L}_{\alpha, \chi, \beta} F := \sum_i b(p_i)\psi_i+\frac 12\sum_{ij}\sigma(p_i,p_j)\psi_{i,j} 
\end{equation}
where $b$ and $\sigma$ are given by
$$
\sigma(p,q)=  4\pi^2  \int_{\mb{U}} pqd\mu
$$
and
$$
b(f^*) =  \int_{\mb D} h(D_\mu f)d\lambda-2\pi\alpha\int_{\mb U}f^*d\mu+\chi\int_{\mb D}f\Re(L'_\mu) d\lambda -\beta \int_{\mb U} f^*d\lambda,
$$
 and where
$$
 D_\mu f(z)   = -2\int_{\mb U}\Re\left(\partial_z\left(z\frac{z+w}{z-w}f(z)\right)\right)\mu(dw), \qquad L_\mu(z) :=-\int_{\mb U}z\frac{z+w}{z-w}\mu(dw).
$$
\end{Defr}

In principle, the knowledge of stationary properties and of an exact expression of the generator of the dynamics help to understand the underlying invariant distribution: in our case the distribution of the free field on the boundary of a metric ball. This can be motivated by the following finite dimensional analogy. Suppose known that the Ornstein-Uhlenbeck dynamics $dX_t = - X_t dt + \sqrt{2} dB_t$  leaves invariant the standard Gaussian (and suppose we do not know the p.d.f. of the standard Gaussian). Given that the generator is $\mc{L} f(x) = f''(x) - x f'(x)$, one obtains with $g = f'$ and $X$ distributed as a standard Gaussian variable $\mc{N}(0,1)$, the condition $\E(g'(X)) = \E(X g(X))$. Now, this relation encodes all information about   $\mc{N}(0,1)$ and characterizes it.  In infinite dimensions, when considering instead the Euclidean growth, it is possible to carry this scheme, this leads to the dynamics \eqref{eq:conc-circles} and the invariant measure is of the form \eqref{eq:field-circle}. However, in the present setting finding by this method an explicit distribution seems to be a difficult problem.

Of course, in the case of $\mc{N}(0,1)$, once one knows the explicit form of the invariant measure, a simple proof of $\E(g'(X)) = \E(X g(X))$ is by integration by parts. For that reason, we look for a  ``good candidate" $\rho(dh)$ satisfying a condition $\int \mc{L} F(h) \rho(dh) = 0$. Formally, if $h \sim \int e^{-V(h)} \mc{D}h$, one would like to use an infinite dimensional integration by parts to retrieve the relation $\int \mc{L} F(h) \rho(dh) = 0$, or, assuming that this relation holds, an equation that the potential $V$ must satisfy. Here and below, $\mc{D}h$ formally stands for an infinite dimensional analogue of the Lebesgue measure, which is translation invariant. Although it doesn't make sense, for a quadratic potential $V$, it usually falls in the classical framework of Gaussian measure in Banach or Hilbert spaces.

\paragraph{A solution of the invariance equation.}
We consider in Section \ref{sec:Trace-fields-and-IBP} the $\sigma$-finite measure $\rho_c$ corresponding to the canonical  log-correlated Gaussian field on $\mb{U}$ (defined in \eqref{eq:field-circle}) with an infinite measure on zero modes and the GMC measure $\mu$ which is  well-defined  for $\rho_c$ a.e. $h$,
\begin{equation}
\label{eq:potential-v}
 \quad \rho_c(dh) = e^{- \frac{1}{4\pi} \int_{\mb{D}} | \nabla H h |^2 d\lambda - c \int_{\mb{U}} h d\lambda} \mc{D}h \quad \text{and} \quad  \mu = e^{-\xi h}.
\end{equation}
The first term is a path integral representation of $\rho_c$ and $\mc{D}h$ stands formally for the infinite dimensional Lebesgue measure on boundary fields on the unit circle $\mb{U}$.  Furthermore, $H h$ is the harmonic extension of $h$ on the disk. Note that $\int_{\mb{D}} | \nabla H h |^2 d\lambda$ is exactly the $H^{1/2}(\mb{U})$ norm.

The measure $\mu = e^{-\xi h}$ is the Gaussian multiplicative chaos measure (GMC) associated with $h$, with parameter $\xi$ (for more on GMC, see Section \ref{subsec:gff}). We discuss why this is a natural measure to consider in this setup in Section \ref{sec:discussion-boundary-measure}. We emphasize here that it is not known that the Loewner measure of the metric growth is given by such a multiplicative chaos measure. Additional discussions are postponed to that section.

Given the explicit form of $\mc{L}_{\alpha, \chi, \beta}$ in $\eqref{def:L}$ with this specific boundary measure $\mu$, which is well-defined on the class of cylindrical bulk test functions for $\rho_c$ a.e. $h$, we look for explicit measures on trace fields that satisfy $\int \mc{L}_{\alpha, \chi, \beta} F(h) \rho(dh) = 0$. In one of our main theorem (Theorem \ref{thm-invariance}), we prove the following.
\begin{thmr}[Theorem \ref{thm-invariance}] Consider the generator $\mc{L}_{\alpha, \chi, \beta}$ as defined above together with $\rho_c(dh)$ and $\mu$ defined in \eqref{eq:potential-v}.  If $F$ is a cylindrical bulk test functional, the following invariance condition holds
\begin{equation}
\label{eq:invariance-condition-intro}
\int \mc{L}_{\alpha, \chi, \beta} F(h)  \rho_c(dh) = 0,
\end{equation}
as soon as the following relations are satisfied:
\begin{equation}
\label{eq:sufficient-condition}
2\xi + (2\xi)^{-1} = -\chi, \quad - 2\pi c = \chi - \alpha, \quad \xi^2 = (2\pi c)^2, \quad \beta = 0.
\end{equation}
\end{thmr}
In the natural LQG setup associated with \eqref{eq:beta=0} where the growth is considered from a $\gamma$-typical point (a $\omega = - \gamma$ singularity), we have $(\alpha,\chi,\beta) = (-2Q+\gamma, -Q,0)$. In this case, the first condition in \eqref{eq:sufficient-condition} gives $\frac{\gamma}{2}+\frac{2}{\gamma} = Q = -\chi  = 2\xi +(2\xi)^{-1}$ so $2\xi = \frac{\gamma}{2}$ or $2\xi = \frac{2}{\gamma}$. With $\xi = \gamma/d_{\gamma}$, the former case gives $2\xi = \gamma/2$, so $d_{\gamma} = 4$. The later one gives $d_{\gamma} = \gamma^2$ which can be excluded by Ang's bound \cite{Ang} ($d_{\gamma} \geq 2 + \gamma^2/2$ so $\gamma^2 \geq 4$ which is impossible when $\gamma \in (0,2)$).  The second and third conditions give $Q - \gamma = \chi - \alpha = -2\pi c = \pm \xi$. Since $\xi = \frac{\gamma}{4}$, this implies $Q = \frac{5}{4} \gamma$ or $Q = \frac{3}{4} \gamma$. With $Q > 2$ and $\gamma \in (0,2)$, we exclude the second possibility. So, $Q = \frac{5}{4} \gamma$ which implies $\gamma^2 = 8/3$ and, together with $d_{\gamma} = 4$, this is consistent with the known formula $d_{\sqrt{8/3}} = 4$. The Hausdorff dimension of the Brownian map was obtained in \cite{LeGall07} and transferred to $\sqrt{8/3}$-LQG by the equivalence \cite{MS15b}.  We emphasize that this calculation does not provide another proof of the formula $d_{\sqrt{8/3}} = 4$.  That would require additional steps, among which the uniqueness of measure satisfying this invariance, and the fact that $\mu = e^{- \xi h}$ in the context of LQG metric growth. For more on this and another approach, see Section \ref{sec:discussion-boundary-measure}.

It is natural to consider other terms in the expression of $\rho_c(dh)$ in \eqref{eq:potential-v}, such as the total mass of a GMC measure, but we have not managed to find a measure on fields $h$ satisfying an invariance equation when $\mu = e^{-\xi h}$ with $\xi > 0$ outside of the pure gravity case.

The proof of the invariance \eqref{eq:invariance-condition-intro} naturally relies on (infinite dimensional) Gaussian integration by parts since they characterize the boundary measure $\rho_c(dh)$ in \eqref{eq:potential-v}. However, given the definition of the generator and the form of the boundary measure $\rho_c(dh)$, this is not trivial nor immediate and requires in particular boundary localization of the terms appearing in the definition of $\mc{L}$. This is the content of Section \ref{sec:boundary-localization-kernelV} whose main result is the following proposition. 
\begin{Propr}[Proposition \ref{Prop:gen-loc}]
For a bulk cylindrical test function $F(h)=\psi(\int_{\mb D}f_1 h d\lambda,\dots, \int_{\mb D}f_n h d\lambda)$,  and with $p_i = f_i^*$, we have
\begin{equation}
{\mc L}_{\alpha, \chi, \beta} F = \sum_i b(p_i)\psi_i+\frac 12\sum_{ij}\sigma(p_i,p_j)\psi_{i,j} 
\end{equation}
where $\sigma(p,q)=  4\pi^2  \int_{\mb{U}} pqd\mu$ and, with the kernels $V_{f^*}$ from Section \ref{sec:boundary-localization-kernelV},
$$
b(f^*) =  \int_{\U^2}V_{f^*}(w,w') \partial_n Hh(w')\mu(dw)d\lambda(w')-2\pi \chi \int_{\mb{U}} \partial_n H p_i d\mu+2\pi(\chi-\alpha)\int_{\mb U}f^*d\mu-\beta \int_{\mb U} f^*d\lambda.
$$
\end{Propr}

Furthermore, the invariance is also based on a non-trivial cancellation of several terms. These are observed in Section \ref{sec:invariance-condition}, which contains both Theorem \ref{thm-invariance} and its proof.

\paragraph{An explicit expression of a Dirichlet form in the pure gravity case.}
From Section \ref{sec:dirichlet-forms} to the end of the paper, we suppose here that we are in the pure gravity case, i.e. $(\gamma,\xi,Q)= (4/\sqrt{6},1/\sqrt{6},5/\sqrt{6})$, for which the  equations in \eqref{eq:sufficient-condition} are satisfied. We use then the notation $\mc{L}$ instead of $\mc{L}_{\alpha, \chi,0}$. The main theorem of this section is Theorem \ref{thm:dirichlet-form}, in which we derive  an  expression of the Dirichlet  form $\mc{E}$ associated with the infinitesimal generator $\mc{L}$. The theorem  states the following.
\begin{thmr}[Theorem \ref{thm:dirichlet-form}]
For test functions $F$ and $G$ of the form  $\varphi(\int_{\mb D}f_1h d\lambda,\dots, \int_{\mb D}f_nh d\lambda)$
where $f_i \in C^{\infty}_c(\mb D \setminus \{ 0 \})$ with at least one $f_i$ with non-vanishing mean and $\varphi$ compactly supported, we have 
\begin{align}
\label{eq:Dirichlet-Form}
\mc{E}(F,G):= \int F (-\mc{L} G) d\rho = & 2\pi^2 \int \langle DF, DG \rangle_{L^2(\mu)} d\rho + 2\pi^2  \iint_{\mb{U}} \left( \widetilde{\widetilde{DF} DG} - \widetilde{DF \widetilde{DG}} \right) d\mu d\rho  \\
& + 2\pi^2 \xi  \int \left( \int_{\mb{U}} DF d\lambda \cdot G - \int_{\mb{U}} DG d\lambda \cdot F \right) | \mu | d\rho.  \nonumber
\end{align}
\end{thmr}
In the above theorem, $| \mu |$ is the total mass of the boundary GMC measure $e^{-\xi h}$, $\wt{u}$ is the harmonic conjugate of $u$ (see Section \ref{sec:harmonic-conjugate}), and $DF$ denotes the $L^2(\lambda)$ gradient of $F$,  characterized by $\langle DF, p \rangle = D_p F$ where $D_p F$ is the Fr\'echet derivative of $F$ in the direction $p$, where $p$ is in the Cameron-Martin space of $h$. 

We can decompose the Dirichlet form with its symmetric and anti-symmetric parts as follows
\begin{align*}
\label{eq:symmetric-part}
\mc{E}(F,G) := \int F (- \mc{L} G) d\rho =  \tilde{\mc{E}}(F,G) + \check{\mc{E}}(F,G)
\end{align*}
where the symmetric part is given by
\begin{equation}
 \tilde{\mc{E}}(F,G) = 2\pi^2 \int \langle DF, DG \rangle_{L^2(\mu)} d\rho
\end{equation}
and the antisymmetric part is given by
\begin{align}
\label{eq:antisymmetric-part}
\check{\mc{E}}(F,G) & = 2\pi^2  \iint_{\mb{U}} \left( \widetilde{\widetilde{DF} DG} - \widetilde{DF \widetilde{DG}} \right) d\mu d\rho + 2\pi^2\xi  \int \left( \int_{\mb{U}} DF d\lambda \cdot G - \int_{\mb{U}} DG d\lambda \cdot F \right) | \mu | d\rho.
\end{align}

This result generalizes the invariance equation we obtained. We provide two proofs: the first one in Section \ref{sec:dirichlet-expression} relies on the machinery developed in Section 4. The second one, which is not rigorous (but can be made with some additional efforts) and done in Section \ref{sec:dirichlet-other-proof}, is based on a non-trivial observation that some terms in the generator can be written under divergence form. We include it as this way of calculating might be easier to study or guess what happens away from $\gamma = \sqrt{8/3}$.

\paragraph{Metric growth from an SPDE perspective.}  

The Dirichlet form formalism is a useful theory to prove the existence of weak solution of stochastic partial differential equations (SPDE). It is particularly useful when the form is symmetric \cite{FOT11} and some extensions to non-symmetric forms are considered as well in \cite{Ma-Rockner}.  We discuss in Section \ref{sec:symmetric-part} (Proposition \ref{prop:weak-solution}) how to construct a weak solution associated with the symmetric part of the Dirichlet form, using a formal change of variables. 

For the stochastic heat equation (SHE), the stochastic quantization of $\phi^4_2$ \cite{Albeverio-Rockner, DaPrato-Tubaro} or $\exp(\Phi)_2$ \cite{Hoshino-L2, Hoshino-L1, G18} and for the stochastic Ricci flow \cite{DS19}, the invariant measure of the dynamics is constructed using the Gaussian free field as a building block and it is possible to construct weak solutions using the formalism of symmetric Dirichlet forms. We gather in the table below these dynamics together with their invariance measure and their Dirichlet form, by order of complexity. The $\xi = 0$ case of our Dirichlet form is similar to the SHE in the sense that the Laplacian is replaced by another operator (the Dirichlet-to-Neumann operator, introduced in Section \ref{sec:Dir-to-Neu}). Furthermore, our symmetric Dirichlet form has a similar structure as the one of the Stochastic Ricci flow. We note that making sense of $\Phi^4_2$ and $\exp (\Phi)_2$ takes some work. See, e.g., \cite[Section 9]{Hairer-LN} in the case of $\Phi^4_2$. The $\exp (\Phi)_2$ model is a specific case of Gaussian Multiplicative Chaos and we refer the reader to \cite{Berestycki17}. In all these cases, the integration by parts to obtain the symmetric Dirichlet form from the expression of the generator is straightforward and is essentially a one line computation. We note that in some of these cases, strong solutions are in fact known to exist and the proof generally follows a Da Prato-Debussche argument, which goes back to \cite{DaPrato}.

\noindent
\begin{table}[!ht]
\begin{adjustwidth}{-0.5cm}{}
\begin{tabular}{| l | l | l | c | }
\hline
 & Dynamics & Invariant measure & Dirichlet form \\ \hline 
 &  & & \\ 
S.H.E.   & $ \partial_t h = \Delta h + \sigma \xi$ & $  \rho_0(dh) = e^{ - \sigma^{-2}  \int h (-\Delta h) d\lambda } \mc{D}h$ & $\frac{\sigma^2}{2} \int \langle DF, DG \rangle_{L^2(\lambda)} d\rho_0$  \\ 
 & & & \\ \hline
& & & \\ 
S.Q. of $\Phi_2^4$  & $\partial_t h = \Delta h - :h^3: +  \sigma \xi $ &  $ \nu(dh) = e^{ - \sigma^{-2} \int \frac{1}{2} :h^4: d\lambda } \rho_0(dh) $ & $\frac{\sigma^2}{2} \int \langle DF, DG \rangle_{L^2(\lambda)} d\nu$ \\ 
 & & & \\ \hline
& & & \\ 
S.Q. of $\exp(\Phi)_2$  & $\partial_t h = \Delta h  - \alpha  e^{\alpha h } + \sigma \xi$ & $ \nu(dh) = e^{ - 2 \sigma^{-2} \int e^{\alpha h} d\lambda }\rho_0(dh)$ &  $ \frac{\sigma^2}{2} \int \langle DF, DG \rangle_{L^2(\lambda)} d\nu$ \\ 
 &   & & \\ \hline
& $\partial_t \phi = e^{-2 \phi} \Delta \phi - \alpha + \sigma e^{-\phi} \xi_0$ &  & \\ 
S. Ricci flow & or, with $A = e^{2 \phi} A_0$,  & $\nu(d\phi) = e^{ - \sigma^{-2} \alpha |A| }  $  & $\frac{\sigma^2}{2} \int \langle DF, DG \rangle_{L^2(A)}  d\nu$ \\
& $ \partial_t A = 2 \Delta \phi A_0 -2 \alpha A + 2 \sigma e^{\phi} \xi_0 A_0$  & $ \qquad \qquad e^{ - \sigma^{-2}  \int_{\mb{T}^2} \phi (- \Delta \phi) d A_0 } \mc{D} \phi$  &  \\ 
\hline
\end{tabular}
\caption{Some dynamics and associated symmetric Dirichlet forms. The Lebesgue measure is denoted by $\lambda$.}
\label{tab:background-dynamics}
\end{adjustwidth}
\end{table}

{\it An SPDE with a non-trivial anti-symmetric part: stochastic Burgers.} 

 One would naturally like to use the theory of non-symmetric Dirichlet forms \cite{Ma-Rockner} to construct a process $(h_t)$ whose Dirichlet form is given by \eqref{eq:Dirichlet-Form}. In \cite{Ma-Rockner}, this is possible when some \textit{weak sector condition} holds but here, it is not clear whether this is the case. Very roughly speaking, the condition holds when the symmetric part of Dirichlet form can be used to ``control" the anti-symmetric part.  An interesting feature in our framework is that bulk cylindrical test functionals belong to the domain of the generator, but do not seem to belong to the domain of the symmetric part of the generator (this is discussed in Section \ref{sec:symmetric-part}). 

The stochastic Burgers equation is an example of an SPDE for which the anti-symmetric part is non-trivial as well. It can be obtained by starting from the Kardar-Parisi-Zhang (KPZ) equation describing a large class of surface growth models and is given by $\partial_t h = \Delta h + (\partial_x h)^2 + \xi$, $h: \mb{R}^+ \times \mb{T} \to \mb{R}$ (see, e.g., the survey \cite{KPZ-corwin}). To get an invariant probability measure, one moves from KPZ to stochastic Burgers by taking $u=\partial_x h$. Then $\partial_t u = \Delta u + \partial_x u^2 + \partial_x \xi$ and $u$ has white-noise as a stationary solution. An interesting feature of this equation is that it is possible to make sense of the term $``\int_0^t \mc{L} F(u_s) ds"$ appearing in the martingale problem for cylindrical test functions but $\mc{L} F$ alone does not exist for these test functionals \cite{Goncalves-Jara, Gubinelli-Jara, Gubinelli-Perkowski-Generator-Burgers}. 
The stochastic Burgers generator can be decomposed as $\mc{L} = \mc{L}_0 + \mc{G}$ where $\mc{L}_0$ is the Ornstein-Uhlenbeck generator and $\mc{G}$ is the Burgers generator. The symmetric part of $\mc{L}$ is $\mc{L}_0$.
The analysis of the dynamics is done by thinking of $\mc{L}$ as a perturbation of $\mc{G}$: when directly solving the resolvent equation, the starting point is to rewrite $(\lambda-\mc{L})u = f$ as $(\lambda-\mc{L}_0)u = \mc{G}u + f$. 

In our framework, the symmetric part of the generator is just within reach, we can make sense of a process only when studying formally $(e^{\xi h_t})$ and it is not clear whether a process associated with the Dirichlet form can be constructed by starting from the process associated with the symmetric part of the Dirichlet form.

Associated dynamics are given (formally), for a space-time white noise $W$, by
\begin{equation}
\label{eq:weak-sols}
\frac{d}{dt} e^{\xi h_t}  =  \pi \xi (\partial_n H h_t  + \xi ) +2\pi \xi e^{\frac{1}{2} \xi h_t} W(dw,dt)
\end{equation}
The reason why these dynamics describe the symmetric part of the Dirichlet form only at a formal level is explained at the beginning of Section  \ref{sec:symmetric-part} and  in Section \ref{sec-formal-gen}. However, the proposition below, which provides a construction of a weak solutions of \eqref{eq:weak-sols}, is rigorous.

In the following statement, $\mc{M}(\mb{U})$ denotes the space of measure on the unit circle with finite total mass. 
\begin{Propr}[Informal version of Proposition \ref{prop:weak-solution}]
For every $\xi \in (0,1)$, there exists an $\m$-symmetric diffusion  $( (\mu_{t}^{\xi})_{t\geq 0}, (P_{\mu_0})_{\mu_0 \in \mc{M}(\mb{U})})$ on $\mc{M}(\mb{U})$ such that for any smooth function $p$ and $\m$-every $\mu_0 \in \mc{M}(\mb{U})$, under $P_{\mu_0}$, $\mu_0^{\xi} = \mu_0$ and
\begin{equation}
d \int_{\mb{U}} p(w) \mu_t^{\xi}(dw) = \pi \xi \int_{\mb{U}} p (\partial_n H h_t  + \xi ) d\lambda dt +2\pi \xi \left( \int_{\mb{U}} p(w)^2 \mu_t^{\xi}(dw) \right)^{1/2} d\beta_t^p
\end{equation}
where $(\beta_t^p)_{t \geq 0}$ is a standard Brownian motion. 
\end{Propr}
The definition of $\m$ in the statement and the Dirichlet form corresponding to these dynamics are given in the paragraph surrounding \eqref{def:symmetric-dirichlet-form}. More details on how for each $t>0$, $h_t$ is defined as measurable function of $\mu_t^{\xi}$ are given in that section as well.

\paragraph{Acknowledgments.} We would like to thank the referee for their careful reading as well as for their helpful suggestions for improvements.

\section{Preliminaries}
\label{sec:prel}

\subsection{Notation} 

\label{sec:notation}

We will denote by $c$ and $C$ constants whether they should be thought
as small or large. They may vary from line to line and depend on other parameters that are fixed. $\mb{C}$ stands for the complex plane, $\mb{U}$ for the unit circle and  $\mb{D}$ for the unit disc in the plane. If $z\in \mb{C}$, we denote by $\Re(z)$ and $\Im (z)$  the real and imaginery parts of $z$ and by $\bar{z}$ the complex conjugate of $z$.

We denote by $\partial_n$ the inward pointing normal derivative and by $\partial_\theta$ the tangential derivative in the counterclockwise direction. We use $\Delta = \frac{\partial^2}{\partial x^2}+\frac{\partial^2}{\partial y^2}$ and $\Delta_{\mb{U}} = \partial_\theta^2$. We  denote by $\partial_t$ the derivative in time and also write sometimes $\frac{d}{dt}$.  Furthermore, if $\psi \in C^{\infty}(\mb{R}^n)$, we denote by $\psi_i$ its partial derivative with respect to its $i$-th coordinate and by $\psi_{i,j}$ its second order partial derivative with respect to the $i$-th and $j$-th coordinates.

Let $\lambda$ denote the Lebesgue measure (on the plane, disk or circle). We set $\dashint_E f d\lambda = \lambda(E)^{-1} \int_E f d\lambda$.  We denote by $\mc{M}(\mb{U})$ the space of Borel measures with finite total mass on $\mb{U}$ and for  $\mu \in \mc{M}(\mb{U})$, we denote by $|\mu|$ its total mass. 

$L^2(\mb{U})$ and $L^2(\mb{D})$ are the space of $L^2$ integrable functions on $\mb{U}$ and $\mb{D}$ w.r.t. the Lebesgue measure. When we consider another measure $\mu$ instead of $\lambda$, we denote the associated $L^2$ space by $L^2(\mu)$. We use the following notation for the Fourier basis of $L^2(\mb{U})$,
\begin{equation}
\label{def:Fourier}
e_0=\frac 1{\sqrt{2\pi}}, \quad 
e_{2m-1}=\frac{\cos(m\cdot)}{\sqrt\pi}, \quad
e_{2m}=\frac{\sin(m\cdot)}{\sqrt\pi}.
\end{equation}
We also set $H^s(\mb{U}) := \{ p  = \sum_{k} a_k e_k : \sum  |k|^{2s} |a_k|^{2} < \infty  \}$, the Sobolev space with index $s$ on $\mb{U}$.

We denote by $H$ the Poisson kernel given by
\begin{equation}
\label{def:Poisson-Kernel}
H(z,w) = \frac{1}{2\pi} \Re( \frac{w+z}{w-z}) 
\end{equation}
The harmonic extension of a function $h_\partial$ on $\mb{U}$ to ${\mb D}$ is then given by $ (Hh_{\partial})(z)=\int_{\mb U} H(z,w)h_{\partial}(w)d\lambda(w)$. We denote by $H^*$ the adjoint of $H$ w.r.t. the standard $L^2({\mb D})$, $L^2({\mb U})$ norms:
$$
(H^*f)(w)=\int_{\mb D}H(z,w)f(z)d\lambda(z)
$$
so that 
\begin{equation}
\label{eq:adjoint-int}
\langle p,H^*f\rangle_{L^2({\mb U})}=\langle Hp,f\rangle_{L^2({\mb D})}
\end{equation}
We sometimes abuse the notation and write $f^*$ instead of $H^* f$.

\subsection{Harmonic conjugation} 
\label{sec:harmonic-conjugate}

If $f = u + iv$ is a holomorphic function on $\mb{D}$, we say that $v$ is a harmonic conjugate of $u$. The harmonic conjugate of $u$ is unique up to a constant. We denote by $\widetilde{u}$ the harmonic conjugate of $u$ whose value at the origin is zero, if it exists. For an introduction to harmonic conjugation, see Chapter 3 in \cite{bounded-analytic-functions}. 

Let $p\in C^\infty({\mb U})$. Its harmonic extension $Hf$ has a harmonic conjugate on $\mb{D}$ given by, for $z \in \mb{D}$,
$$(\widetilde{Hp})(z)=\frac 1{2\pi}\int_{\mb U}p(w)\Im\left(\frac{w+z}{w-z}\right)d\lambda(w)$$
with $(\widetilde{Hp})(0)=0$. It extends continuously to a smooth function on the boundary:
\begin{equation}
\label{eq:harm-conj-pv}
\tilde p(w)=\frac 1{2\pi}p.v.\int_{\mb U}p(w')\Im\left(\frac{w'+w}{w'-w}\right)d\lambda(w')
\end{equation}
By construction $\tilde{\tilde p}=-p+\dashint_{\mb U}p$. This conjugation has a simple effect on the Fourier basis of $L^2({\mb U})$:
$$
1\mapsto 0, \Re(w^n)\mapsto\Im(w^n), \Im(w^n)\mapsto -\Re(w^n)
$$
and we have the antisymmetry
$$
\langle \tilde p,q\rangle_{L^2({\mb U})}=-\langle p,\tilde q\rangle_{L^2({\mb U})}
$$
Clearly, it also commutes with rotations, so that $\partial_\theta \tilde p=\widetilde{\partial_\theta p}$. Furthermore, writing $P Q = (p + i \tilde{p})(q + i \tilde{q}) = p q - \tilde{p} \tilde{q} + i (\tilde{p} q + p \tilde{q})$ gives the following identity:
\begin{align*}
\widetilde{(\tilde p q+p\tilde q)}&=-(pq-\tilde p\tilde q)+\dashint (pq-\tilde p\tilde q)\\
&=-(pq-\tilde p\tilde q)+\dashint p\dashint q
\end{align*}
In the left-hand side, the conjugation is applied to the sum $\tilde{p}q + p \tilde{q}$.

\subsection{Dirichlet-to-Neumann operator}

\label{sec:Dir-to-Neu}

The Dirichlet-to-Neumann operator, a pseudo-differential operator, is given by the composition $\partial_n \circ H$ which we shorten as $\partial_n H$. Writing the Fourier basis using $\Re(z^n)$ and $\Im (z^n)$, we see that
\begin{equation}
\partial_n H e_i = - \lambda_i e_i, \quad \partial_\theta e_i = -\lambda_i \tilde{e}_i \quad \text{where} \quad \lambda_i: =  \lceil i/2 \rceil
\end{equation}
So the Fourier functions form an eigenvector basis for $\partial_n H$. In particular $(\partial_n H)^2 = - \partial_\theta^2 = -\Delta_{\mb{U}}$. So, ``$\partial_n H = -(-\Delta_{\mb{U}})^{1/2}$" is an instance of the fractional Laplacian. In general, we have the following relations
\begin{align*}
\partial_nHp&=-\partial_\theta\tilde p=-\widetilde{\partial_\theta p}\\
\partial_nH\tilde p&=\partial_\theta p
\end{align*}
Furthermore, if  $p \in C^{\infty}(\mb{U})$, then 
$$
(\partial_nHp)(w) = -\frac 1{2\pi}p.v.\int_{{\mb U}}\partial_\theta p(w')\Im\left(\frac{w'+w}{w'-w}\right)d\lambda(w').
$$

\subsection{Gaussian free fields}

\label{subsec:gff}

Gaussian free fields (GFF) are random distributions (in the sense of Schwartz) that are Gaussian and whose covariance kernel is given by a Green function associated with the Laplacian. Below, we are interested in the whole-plane GFF, its restriction to the unit circle and in GFFs on the disc. In this latter case, we need to prescribe boundary conditions to make sense of $\Delta^{-1}$. We will consider the Dirichlet GFF with zero boundary values and the Neumann  GFF (sometimes called free GFF). In this latter case, the GFF is canonically defined in the space of distributions modulo constants, or seen as a $\sigma$-finite measure with the Lebesgue distribution on the zero modes. We refer the reader to Section 4 in \cite{Dub-SLE-GFF} and to \cite{Scott-GFF, Powell-Werner} for more background on the GFF.  Below, we introduce some notation and basic facts that we need.

\paragraph{GFF on the disk.}  The Dirichlet GFF with zero boundary values on $\mb{D}$ is a Gaussian field whose covariance kernel is given by 
$$
G_{D}(z_1, z_2) = - \log \left| \frac{z_1-z_2}{1-\bar{z}_1 z_2} \right|,
$$
which solves $\Delta G_D(z,\cdot) = -2\pi  \delta_z(\cdot)$ and $G_D(z,\cdot) = 0$ on $\mb{U}$ for $z \in \mb{D}$.

The Neumann GFF on $\mb{D}$ has covariance kernel given by
$$
G_N(z_1, z_2) = -\log | (z_1-z_2) (1-\bar{z}_1 z_2) |, 
$$
a solution of $\Delta G_N(z,\cdot) = -2\pi  \delta_z(\cdot)$ and $\partial_n G_D(z,\cdot) = 1$ on $\mb{U}$. We can decompose the Neumann GFF  $h_N$ as $h_N = h^{\circ} + h^{\partial}$ where $h^{\circ}$ is a Dirichlet GFF with zero boundary value and $h^{\partial}$ has covariance kernel given on $\mb{D}$ by
$$
G_{\partial}(x,y) = \E(h^{\partial}(x),h^{\partial}(y))= - 2\log | 1 - \bar{x} y|.
$$
Note that $h^{\partial}$ can be realized by taking the harmonic extension of a Gaussian field on $\mb{U}$ whose covariance is given for $w,w' \in \mb{U}$ by
$$
\E( h^{\partial}(w) h^{\partial}(w')) = - 2\log | w-w'|.
$$
We also introduce the Green kernel $G = \Delta^{-1}$ on the unit disk
\begin{equation}
\label{eq:def-Green}
G(z_1, z_2) = \frac{1}{2\pi}\log\left|\frac{z_1-z_2}{1-z_1\bar z_2}\right|,
\end{equation}
and we note that $G_D(z_1, z_2) = -2\pi G(z_1,z_2)$.

\paragraph{Whole-plane GFF.}

The ``law" of the whole-plane GFF $\phi_{\mb{C}}$ is given by 
$$
\int F(m+\phi_{\mb{C}}^0) dm \otimes d \phi_{\mb{C}}^0
$$
where $\phi_{\mb{C}}^0$ is a whole-plane GFF modulo constant, normalized so that for $f,g \in H^1(\mb{C})$ with $\dashint f = 0$, $\dashint g = 0$,
$$
\Cov (\langle \phi_{\mb{C}}^0, f \rangle_{L^2} \langle  \phi_{\mb{C}}^0, g \rangle_{L^2}) = \int f(x) (-\log |x-y|) f(y) dx dy.
$$
Here, we see the whole-plane GFF as a $\sigma$-finite measure. 

\paragraph{Boundary field on the circle.} The Neumann GFF $h_N$ with covariance $-\log| w- z|$ in the bulk has covariance kernel given by $-2\log | w-z|$ on the boundary and its restriction to the boundary can be written as
\begin{equation}
\label{eq:field-circle}
h = h_N |_{\mb{U}} =  \sqrt{2\pi} \sum_{m \geq 1}  \frac{e_m}{\sqrt{\lambda_m}} X_m
\end{equation}
where the  $X_m$'s are i.i.d. standard Gaussian. Indeed, for $w = e^{i \theta}$, $z = e^{i \theta'}$, 
\begin{align*}
\E(h(w) h(z)) & = 2\pi \sum_{i=1}^{\infty} \frac{1}{\lambda_i} e_i(w) e_i(z) = -2 \Re\left( - \sum_{m=1}^{\infty}\frac{1}{m} e^{i(\theta-\theta')m} \right)  \\
& = -2 \Re \log (1-e^{i(\theta-\theta')}) = -2 \log |1- \bar{z} w| = -2 \log |w-z| 
\end{align*}
From $\eqref{eq:field-circle}$, one can check that $h$ is almost surely in the Sobolev spaces $H^{-s}(\mb{U})$ for every $s > 0$. This Gaussian field can be represented by using the $L^2(\lambda)$ white noise $W = \sum_{m \geq 1} X_m e_m$ for i.i.d. standard Gaussian $(X_m)$ as follows
$$
h = \sqrt{2\pi} \sum_{m \geq 1} \frac{e_m}{\sqrt{\lambda_m}} X_m = \sqrt{2\pi} (-\partial_n H )^{-1/2} W.
$$

Recall that $H$ denotes the Poisson operator (harmonic extension of $h_\partial$ to ${\mb D}$), and $\partial_nH$ the Dirichlet-to-Neumann operator (inward pointing normal derivative). For $h$ harmonic in ${\mb D}$,
$$\int_{\mb D}|\nabla h|^2 d\lambda = \int_{\mb{D}} | \nabla H h |^2 d\lambda =-\int_{\mb U}h\partial_n Hh d\lambda_{\partial}$$ 
Let $\rho_N$ be the measure on trace fields induced by a Neumann (free) GFF, summed over zero modes, with 2-point function $\sim -\log|x-y|$ in the bulk.  Formally, it is given by
\begin{equation}
\label{def:rho0}
\rho_N(dh) = \rho_0(dh_0) \otimes dm \propto \exp \left( -\frac{1}{4\pi} \int_{\mb{U}} h(-\partial_n H h) d\lambda \right) \mc{D}h
\end{equation}
$h$ is the sum $h = m + h_0$ where $m = \dashint_{\mb{U}} h d\lambda$ is the mean of $h$ and $\rho_0(dh_0)$ is the probability measure associated with \eqref{eq:field-circle}. Finally,  we note that the restriction of the kernel $- \frac{1}{2\pi} G_N(z_1,z_2)$ to ${\mb U}$ inverts $\partial_nH$, namely $\partial_n H G_{\partial}(\cdot,x) = - 2\pi \delta_x(\cdot)$.  

\paragraph{Gaussian integration by parts for $\rho_N$.}

Here we discuss the integration by parts formula for a Gaussian measure. We specify it for $\rho_N(dh) = \rho_0(dh_0) \otimes dm$, the measure on trace fields induced by a Neumann GFF summed over zero modes, formally given in \eqref{def:rho0}.

The Cameron-Martin space of the restriction of the Neumann GFF to $\mb{U}$ is 
$$
\left \lbrace p : \int_{\mb{D}} | \nabla H p |^2 d\lambda < \infty \right \rbrace  = \left \lbrace p : - \int_{\mb{U}} p \partial_n H p d\lambda < \infty \right \rbrace = H^{1/2}(\mb{U})
$$
Indeed,  using the Fourier decomposition of $p$, namely $p=\sum_{k \geq 1} p_k e_k$, we have $- \int_{\mb{U}} p \partial_n H p d\lambda = \sum_{k \geq 0} \lambda_k p_k^2$.  Set 
$$
\langle p,q \rangle_{H^{1/2}(\mb{U})} := \frac{1}{2\pi} \int_{\mb{D}} \nabla H p \cdot \nabla H q d\lambda   = -  \frac{1}{2\pi} \int_{\mb{U}} p \partial_n H q d\lambda
$$
By the Cameron-Martin formula, for $p \in H^{1/2}(\mb{U})$, $t \in \mb{R}$,
$$
\int G(h + t p) \rho_N(dh) = \int G(h) \exp \left( t  \langle h, p \rangle_{H^{1/2}(\mb{U})} - \frac{t^2}{2} \| p \|^2_{H^{1/2}(\mb{U})} \right) \rho_N(dh).
$$
The integration by parts 
\begin{equation}
\label{prel:ibp}
\int D_p G(h) \rho_N(dh) = \int G(h) \langle h, p \rangle_{H^{1/2}(\mb{U})} \rho_N(dh)
\end{equation}
follows by taking the derivative w.r.t. $t$ in the Cameron-Martin formula and by evaluating it at $t=0$. In what follows, we often write  $- \frac{1}{2\pi} \langle p , \partial_n H h \rangle_{L^2(\lambda)}$ instead of $\langle p, h \rangle_{H^{1/2}(\mb{U})}$.

\paragraph{Gaussian multiplicative chaos.} We are interested here in measures on $\mb{U}$ of the form 
$$
M_{\alpha}(h) := e^{\alpha h(w) - \alpha^2\E(h(w)^2)/2} \lambda(dw)
$$ 
for $\alpha \in (-1,1)$, where $\lambda$ is the Lebesgue measure on $\mb{U}$ and where $h$ is a log-correlated field with $-2 \log$ singularity on the diagonal (in particular $h$ given by \eqref{eq:field-circle} or by \eqref{def:rho0}). This falls in the general theory of Gaussian Multiplicative Chaos (GMC) \cite{Kahane85, Shamov16, Berestycki17}. In particular, this object is constructed as the limit of an approximation scheme similar to \eqref{eq:LQG-measure} namely using a regularization (so that the approximating measures are well-defined) and a renormalization (in order to obtain a non trivial limit). GMC measures $e^{\gamma \phi}$ are defined in any dimension $d \geq 1$ with $\log$-correlated Gaussian fields $\phi$ s.t. $\E(\phi(x) \phi(y)) = - \log |x-y| + O(1)$ when $\gamma^2 < 2d$, which is the reason of $\alpha \in (-1,1)$ above. We note that by an application of the Cameron-Martin formula,  if $F$ is some bounded continuous function, then 
\begin{equation}
\label{eq:weighting-mass}
\E   \left ( \int_{\mb{U}}  f(w) M_\alpha(h)(dw)    F(h)  \right )   = \int_{\mb{U}} f(w)\E[    F( (h(w')   +E[h(w') h(w)])_{w' \in \mb{U}}   )   ]  \lambda(dw).
\end{equation}
Furthermore, for $p$ smooth, $D_p \int_{\mb{U}} f(w) M_{\alpha}(h)(dw) = \alpha \int_{\mb{U}} f(w) p(w) M_{\alpha}(h)(dw) $.

\subsection{Radial Loewner chain}

A growth process $(K_t)$ on $\overline{\mb{D}}$ growing inwards from the boundary, namely with $K_0 = \mb{U}$, is associated with simply connected decreasing domains $D_t = \mb{D} \backslash K_t$, $D_0 = \mb{D}$, $0 \in D_t \subset D_s$ for $s<t$, that can be encoded with a family of conformal maps via the Riemann mapping theorem. Under mild conditions, these conformal maps satisfy the Loewner-Kufarev equation, driven by a measure $\nu_t(dw)dt$ on $[0,T] \times \mb{U}$. When considering the LQG metric growth, we use the notation $\mu_t$ to denote a disintegration of this measure at ``time" $t$.  In fact, the Loewner dynamics give a way to reconstruct the growth process from the measure $\nu_t(dw)dt$ by solving for each $z \in \mb{D}$ an ordinary differential equation (ODE) and considering at each time $t$ the set of points whose lifetime is greater than $t$.

The radial Loewner equation is given, for $z \in \mb{D}$, by $g_0(z) = z$ and
$$
\partial_t g_t(z) = - g_t(z) \frac{g_t(z)+\xi_t}{g_t(z)-\xi_t}
$$
where $\xi_t$ is a continuous process on $\mb{U}$. When $\xi_t$ is a Brownian motion on $\mb{U}$ with variance $\kappa$, this corresponds to radial SLE$_\kappa$ growing from a boundary point towards the origin.  The radial Loewner-Kufarev equation is given, for $z \in \mb{D}$, by $g_0(z) = z$ and
\begin{equation}
\label{eq:loewner}
\partial_t g_t(z) = - g_t(z) \int \frac{g_t(z)+w}{g_t(z)-w} \nu_t(dw).
\end{equation}
Here, the Loewner chain is driven by a measure $\nu_t(dw) dt$ on $[0,T] \times \mb{U}$.  This  ODE is well defined up to a random time $T_z$ (the lifetime of the solution $g_t(z)$). Then, set $D_t := \{ z \in \mb{D} ~ : ~ T_z > t \}$, a simply connected domain containing the origin and $K_t: = \mb{D} \backslash D_t$. In the case of SLE$_\kappa$, there exists a continuous process $(\gamma_t)$ with values in $\overline{\mb{D}}$ such that $\mb{D} \backslash K_t$ is almost surely the connected component of $\mb{D} \backslash \gamma_{[0,t]}$ containing the origin. The growth occurs from the ``tip" of this SLE$_{\kappa}$ path $(\gamma)$. In general, the growth is no longer concentrated at a single point and we are mainly interested in this case. In the simple case where $\nu_t(dw) = s(t) \lambda(dw)$, we have $D_t = e^{- 2\pi \int_0^t s(u) du} \mb{D}$. The equation \eqref{eq:loewner} gives in particular $\partial_t \log  g_t'(0) = |\nu_t |$ so the conformal radius of $D_t$ from $0$ is given by $g_t'(0)^{-1} = e^{-\int_0^t |\nu_s | ds}$.  We also see that a change of parametrization of a growth process $(\bar{g}_t)$ into $(g_t) = (\bar{g}_{u(t)})$ reads $\nu_t(dw) = u'(t) \bar{\nu}_{u(t)}(dw)$ for the driving measures. A natural assumption on the measure $\nu$ is  to ask for $t \mapsto |\nu_t|$ to be locally integrable and in this case, one can solve the Loewner-Kufarev equation. In the other direction, for the equation \eqref{eq:loewner} to hold (in integrated form), it is sufficient to assume that $t \mapsto g_t'(0)$ is absolutely continuous.

\section{Infinitesimal generator of the LQG metric growth}

\label{sec:section-gen}

In \ref{sec:stationarity-martingales}, we study how invariance properties of the whole-plane GFF translate in stationarity at the level of metric growth. Then, we consider the natural filtration given by the metric growth and associated martingales. In \ref{sec:generator}, we consider a shift on (bulk field, boundary field, boundary measure) which generalizes the previous framework and whose properties are motivated by these martingales. Finally, we formally compute the infinitesimal generator of the boundary field process (which can also be seen as a process on harmonic functions) in this generalized framework.

\subsection{Stationarity and martingales of a $(\xi,Q)$-metric}
\label{sec:stationarity-martingales}

Let $\phi_{\C}$ be a full plane GFF (summing over zero modes), $B_t$ the geodesic ball centered at $0$ of radius $e^t$ for a $(\xi,Q)$ metric and $B_t^{\bullet}$ the filling of $B_t$  (the union of $\overline{B_t}$ and the set of points in the complex plane $\mb{C}$ which are disconnected from $\infty$ by $\overline{B_t}$).

Let $\hat g_t:U_t\subset\hat\C\setminus B_t\rightarrow{\mb D}$ the map uniformizing the complement of $B_t^{\bullet}$ (i.e., the unbounded component $U_t$ of the complement of $B_t$ in $\hat\C$), with $\hat g_t(z)=az^{-1}+o(1)$ for $z\gg 1$, $a>0$. Let ${\mc F}_t=\sigma(B_t,(\phi_{\C})_{|B_t})$ (or its right continuous completion). The $\sigma$-finite measure $\mu_\C$ induced by $\phi_\C$ has an image measure $\mu_t$ by $\phi_{\C}\mapsto (B_t,(\phi_\C)_{|B_t})$ (also $\sigma$-finite); we then have a disintegration $\mu_\C=\mu_tK$, where $K$ is a Markov transition kernel (sampling a GFF in $\C\setminus B_t$ with boundary condition $(\phi_\C)_{\partial B_t}$). The family $(\mu_t)_{t\in\R}$ can be thought of as entrance laws.

Concretely, decompose $\phi_\C=\phi^\eps+c_\eps$ where $c_\eps=\dashint_{C(0,\eps)} \phi_\C d\lambda$, $C(0,\eps)$ the Euclidean circle of radius $\eps$, so that $\phi^\eps$ induces a probability measure and $c_\eps$ induces the Lebesgue measure. Set $\mu_t^\eps$ to be the image measure of $\phi_\C$ on the set $\{\phi_\C: C(0,\eps)\subset B_t\}$. For $t$ fixed, for almost every $\phi_{\mb{C}}$, when $\eps$ is small enough, $C(0,\eps) \subset B_t$. So, take $\eps\searrow 0$  to obtain $\mu_t$ satisfying the desired disintegration.

\begin{figure}[ht]
\centering
\includegraphics[scale=1]{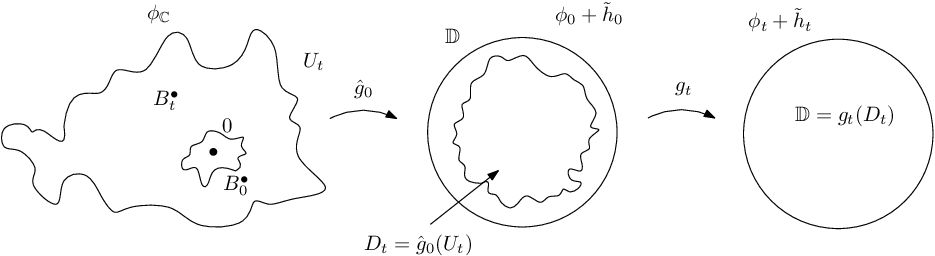}
\caption{Uniformization of the complement of a filled metric ball}
\label{fig:def-notation}
\end{figure}

\paragraph{Stationarity.} We describe the stationarity of (Dirichlet GFF, harmonic function) with respect to the metric growth parametrization. Using the change of coordinates and the Markov property, we decompose
\begin{equation}
\label{def:fields-decompo}
\phi_t+\tilde h_t=\phi_\C\circ \hat g_t^{-1}+Q\log|(\hat g_t^{-1})'|-t/\xi
\end{equation}
where $\phi_t$ is a Dirichlet GFF on $\mb D$ and $\tilde{h}_t$ is harmonic on $\mb D \setminus \{ 0\}$ (so that moving by $\delta r$ in the disk for $e^{\xi (\phi_t+\tilde{h}_t)}ds$ amounts to moving by $e^t\delta r$ in the plane for $e^{\xi \phi_{\mb{C}}}ds$). 

In the following proposition, we will consider the decomposition \eqref{def:fields-decompo} for a field $\underline{\phi_\C}$ instead of $\phi_{\mb{C}}$.
\begin{Prop}
\label{prop:stat}
The relation
$\underline\phi_{t+t_0}+\underline{\tilde h}_{t+t_0}=\phi_t+\tilde h_t$  holds for 
\begin{enumerate}
\item
$\underline{\phi_\C}:=\phi_\C+t_0/\xi$ and
\item
 $\underline{\phi_\C}(z):=\phi_\C(\lambda^{-1} z)$ with $\lambda>0$, $t_0=\xi Q\log\lambda$.
\end{enumerate}
In these two cases, as $\underline{\phi_\C}$ has the same distribution as $\phi_\C$, $(\phi_{t+t_0},{\tilde h}_{t+t_0})$ is distributed as $(\phi_{t},{\tilde h}_{t})$. \\ Furthermore, if $f:[0,\infty)\rightarrow\R$ is smooth and compactly supported, then with
$\underline{\phi_\C}(z) :=\phi_\C(z)+f(D_{\phi_\C}(0,z))$ we have  
\begin{enumerate}
\item
$D_{\underline{\phi_\C}}(0,z)=F(D_{\phi_\C}(0,z)$
where $F(r)=\int_0^re^{\xi f(s)} ds$ and
\item
with $e^{t'}=F(e^t)$, $\underline\phi_{t'}+\underline{\tilde h}_{t'} =\phi_t+\tilde h_t+f(D_{\phi_\C}(0,\hat g_t^{-1}))+(t-t')/\xi.$
\end{enumerate}
\end{Prop}

\begin{proof}
Set $D_t = \hat g_0(U_t)$ and $\hat g_t=g_t\circ \hat g_0$,  where $g_t: D_t \subset{\mb D}\setminus \hat g_0(B_t)\rightarrow {\mb D}$ uniformizes the component of 0 of points at distance $>e^t-1$ of the circle in the $\phi_0+\tilde h_0$ metric. We have, on $D_t$,
\begin{align*}
(\phi_t+\tilde h_t)\circ g_t&=\phi_\C\circ\hat g_0^{-1}+Q\log|(\hat g_t^{-1})'|\circ g_t-t/\xi\\
&=\phi_0+\tilde h_0-Q\log|g'_t|-t/\xi
\end{align*}

Let $\underline{\phi_\C}=\phi_\C+t_0/\xi$. The ball of radius $e^{t+t_0}$ for $\underline{\phi_\C}$ is the ball of radius $e^t$ for $\phi_\C$; then
$$
\underline\phi_{t+t_0}+\underline{\tilde h}_{t+t_0}=\phi_t+\tilde h_t
$$ 
i.e. offsetting the field corresponds to a time shift. In particular, the $\sigma$-finite measure induced on $(\phi_t,\tilde h_t)_{t\in\R}$ is stationary by the definition \eqref{def:fields-decompo} as the decomposition in unique.

Similarly, let $\underline{\phi_\C}(z)=\phi_\C(\lambda^{-1} z)$, $\lambda>0$, $t_0=\xi Q\log\lambda$. The ball of radius $\lambda^{\xi Q}e^t=e^{t+t_0}$ for $\underline{\phi_\C}$ is $\lambda B_t$; the uniformizing map for its complement is $\underline {\hat g}_{t+t_0}=\hat g_t(\lambda^{-1}\cdot)$. Then 
\begin{align*}
(\underline\phi_{t+t_0}+\underline{\tilde h}_{t+t_0})(w)&=\underline{\phi_\C}\circ\underline{\hat g}_{t+t_0}^{-1}(w)+Q\log|(\underline{\hat g}_{t+t_0}^{-1})'|(w)-(t+t_0)/\xi\\
&=\phi_\C\circ \hat g_t^{-1}(w)+Q\log|(\hat g_t^{-1})'|(w)+Q\log\lambda-(t+t_0)/\xi\\
&=\phi_t+\tilde h_t
\end{align*}
i.e. scaling space also results in a time shift.

More generally, let $f:[0,\infty)\rightarrow\R$ be, say, smooth and compactly supported. Let
$$\underline{\phi_\C}(z)=\phi_\C(z)+f(D_{\phi_\C}(0,z)),$$
a Girsanov shift. First, we observe that
$$
D_{\underline{\phi_\C}}(0,z)=F(D_{\phi_\C}(0,z))
$$
where $F(r) :=\int_0^re^{\xi f(s)} ds$. Indeed, $\leq$ holds by taking a geodesic for $D_{\phi_{\mb{C}}}$ and applying Weyl scaling. To bound it from below, namely the $\geq$ assertion, a geodesic from $0$ to $z$ for $D_{\underline{\phi_{\mb{C}}}}$ must cross at least once each boundary of  $D_{\phi_{\mb{C}}}$ ball centered at $0$ and of radius $r \leq D_{\phi_{\mb{C}}}(0,z)$. Using this, we note  that $\underline B_{t'}=B_t$ where $e^{t'}=F(e^t)$ and $\frac{dt'}{dt}=e^{t-t'+\xi f(e^t)}$.
\begin{align*}
\underline\phi_{t'}+\underline{\tilde h}_{t'}&=\underline\phi_\C\circ\underline{\hat g}_{t'}^{-1}+Q\log|(\underline{\hat g}_{t'}^{-1})'|-t'/\xi\\
&=\phi_t+\tilde h_t+f(D_{\phi_\C}(0,\hat g_t^{-1}))+(t-t')/\xi
\end{align*}
so that adding $c$ to $\tilde h$ speeds up growth time by a factor $dt'/dt=e^{\xi c}$.\end{proof}

Remark on singularities. One can also start from $\phi_\C+\omega\log|\cdot|$, $\omega\in\R$ fixed in some range (to ensure finiteness of distances). Then
$$\phi_t+\tilde h_t=\phi_\C\circ \hat g_t^{-1}+\omega\log|\hat g_t^{-1}|+Q\log|(\hat g_t^{-1})'|-t/\xi.$$
Offsetting the field and scaling space still result in a time shift and we still have
$$(\phi_t+\tilde h_t)\circ g_t=\phi_0+\tilde h_0-Q\log|g'_t|-t/\xi.$$
Note that for $z$ near  $0$,
$$\omega\log|\hat g_t^{-1}|(z)=-\omega\log|z|+(reg),  \quad 
Q\log|(\hat g_t^{-1})'|(z)=-2Q\log|z|+(reg),$$
so that, by the removable singularity theorem for harmonic function, 
\begin{equation}
h_t = \tilde{h}_t - \alpha \log |\cdot|
\end{equation} 
is harmonic in $\mb D$ for $\alpha = -2Q - \omega$.

\paragraph{First and second moment martingales.}  Let ${\mc F}_t=\sigma((\phi_\C)_{|B_t})$. Conditionally on ${\mc F}_t$, $\phi_t$ is a Dirichlet GFF in ${\mb D}$. The conditional expectations below refer to a Dirichlet GFF with $0$-boundary values. We have not integrated over the remaining ``randomness" which makes the measure only $\sigma$-finite. We will consider local martingales of the form $\E( \int_{\mb{C}} \phi_{\mb{C}} f d\lambda | \mc{F}_t )$ and $\E( (\int_{\mb{C}} \phi_{\mb{C}} f d\lambda)^2| \mc{F}_t )$  where $f$ is a smooth function with compact support.

{\em First moment martingale.} We have on $U_t$ (see Figure \ref{fig:def-notation} for a reminder of $U_t$),
$$(\tilde h_t-Q\log|(\hat g_t^{-1})'|+t/\xi)\circ\hat g_t=\tilde h_t\circ\hat g_t+Q\log|\hat g'_t|+t/\xi-\omega\log|\cdot|=\E(\phi_\C|{\mc F}_t)$$
and for $t\geq 0$, on $D_t$,
\begin{equation}
\tilde h_t\circ g_t+Q\log|g'_t|+t/\xi=\E(\phi_\C\circ\hat g_0^{-1}|{\mc F}_t)+Q\log|(\hat g_0^{-1})'|+\omega\log|\hat g_0^{-1}|
\end{equation}

{\em Second moment martingale.} We denote by $G_{U_t}$ the Green kernel in $U_t$, with the same normalization as $G$ in Section \ref{subsec:gff} .
For $f$ a test function supported outside of $B_0$, for $t$ small, using the Markov property and averaging over the Dirichlet GFF on $U_t$,
$$
\E((\int_\C \phi_\C f d\lambda )^2 |{\mc F}_t )=(\int_\C \E(\phi_\C|{\mc F}_t)fd\lambda)^2-2\pi\int fG_{U_t}f
$$
Write $f_t=f\circ \hat g_t^{-1}=f_0\circ g_t^{-1}$, $\tilde f_t=f_t|(\hat g_t^{-1})'|^2=\tilde f_0\circ g_t^{-1}|(g_t^{-1})'|^2$; then
\begin{align}\
\E \left( \left( \int \phi_\C f d\lambda \right) ^2|{\mc F}_t \right) &=\left(\int_{\mb D} \left(\tilde h_t-Q\log|(\hat g_t^{-1})'|+t/\xi \right) f_t|(\hat g_t^{-1})'|^2d\lambda\right)^2-2\pi\int_{{\mb D}^2}f_t|(\hat g_t^{-1})'|^2G_{\mb D}f_t|(\hat g_t^{-1})'|^2 \nonumber \\
&=\left(\int_{\mb D} \left( \tilde h_t-Q\log|(\hat g_t^{-1})'|+t/\xi \right) \tilde f_td\lambda\right)^2-2\pi\int_{D_t^2}\tilde f_0G_{D_t}\tilde f_0
\end{align}
with $D_t=g_t^{-1}({\mb D})$ and $G_{D_t}$ is the Green kernel in $D_t$, with the same normalization as $G$ in Section \ref{subsec:gff}.

Altogether, this gives the following lemma.
\begin{Lem}[Local martingales] For $f \in C_c^{\infty}(\mb{C})$ supported outside $B_0$, we have, on $\{B_t \cap \mathrm{Supp}(f) = \emptyset \}$,
\begin{align*}
\E \left( \int \phi_{\mb{C}} f d\lambda ~ | ~ \mc{F}_t \right) & = \int_{D_t} (\tilde h_t\circ g_t+Q\log|g'_t|+t/\xi)\tilde{f}_0 d\lambda, \\
\E \left(  (\int \phi_{\mb{C}} f d\lambda )^2 ~ | ~ \mc{F}_t \right) & = \left(\int_{\mb D} \left( \tilde h_t-Q\log|(\hat g_t^{-1})'|+t/\xi \right) \tilde f_td\lambda\right)^2-2\pi\int_{D_t^2}\tilde f_0G_{D_t}\tilde f_0.
\end{align*}
\end{Lem}

\subsection{Shift and generator on bulk cylindrical test functions}
\label{sec:generator}

In this section we consider a framework similar to the one above but with general parameters instead of the ones coming specifically from the metric growth. The setup is almost the same and briefly described in the next paragraph  (a growth with Loewner measure $\mu_t$ where the boundary values of the GFF are encoded by $h_t$, which satisfy a relation at each time with an independent Dirichlet GFF). We want to define a generator for this growth, for the Markov process $(h_t)$, by choosing a specific set of test functionals and deriving an explicit expression when it acts on this set.

\paragraph{Shift.} We want to define a left shift $\theta_t$,
\begin{equation}
\label{eq:shift-equation}
\theta_t(\phi,h,\mu)=(\phi_t,h_t,\mu_t),
\end{equation}
satisfying the following properties: $\phi_t$ is a Dirichlet GFF on $\mb D$ independent of $h_t$ (with $\sim -\log|x-y|$ covariance in the bulk), $\mu_t$ corresponds to a Loewner flow $(g_t)$ and on $\mb D$,  $\phi_t + \tilde{h}_t = (\phi + \tilde{h}) \circ g_t^{-1} - \chi \log | (g_t^{-1})'| - \beta t$, where $\tilde{h}_t = h_t + \alpha \log |\cdot|$. Equivalently, we want to have on  $D_t = g_t^{-1}({\mb D})$,
\begin{equation}
\label{eq:Shift-equation}
(\phi_t+\tilde h_t)\circ g_t=\phi+\tilde h+\chi\log|g'_t|-\beta t.
\end{equation}
This generalizes the LQG growth cases where $g_t$ is associated with metric balls, \eqref{eq:beta=0} where $\beta = 0$ and \eqref{eq:beta/=0} where $\beta =\xi^{-1}$.

Let $\nu$ denote the law of a  Dirichlet GFF on ${\mb D}$ and $\rho$ a $\sigma$-finite measure on harmonic functions (thought of as harmonic extensions of boundary fields). The shift invariance condition reads
$$\int (F\circ\theta_t)\nu(d\phi)\rho(dh)=\int F\nu(d\phi)\rho(dh)$$
for $F$ a fixed test functional and $t$ small (given $F$). The infinitesimal version formally reads
\begin{equation}
\label{eq:framework-invariance-eq}
\int {\frac{d}{dt}}_{|t=0}(F\circ\theta_t) \nu(d\phi)\rho(dh)=0.
\end{equation}
This is formal in the sense that we have switched the integral and the derivative.  We want to find $\alpha, \chi, \beta$, $\rho(dh)$ and $h\mapsto\mu(h)$ so that this holds. This will be the primary focus of Section \ref{sec-invariance}.

\paragraph{Hadamard variation.} 

Here, we discuss the relation between the driving measure associated to a growth process in Loewner's equation and a speed measure $s(w) \lambda(dw)$. We first mention the Hadamard variation formula \eqref{eq:Hadamard} below. Then, we introduce some notations and preliminaries regarding the Loewner vector field and equation that are used to prove Lemma \ref{lem:hadamard} and in the subsequent sections. An application of the identification is given in Section \ref{sec:discussion-boundary-measure}, regarding the boundary measure appearing in the Loewner equation in the context of the LQG growth.

Start from the unit disk ${\mb D}$ and flow the boundary normally inward locally at speed $s(w)$, $w\in{\mb U}$ so each point $w \in \mb{U}$ moves locally normally inward via an application $S_t$ in a way that $\frac{d}{dt  }_{| t = 0} S_t(w) = - s(w) w$. Let $G_t(z,w)$ be the Green kernel in the new domain $D_t$. We have 
$$
G(z_1,z_2)=G_0(z_1,z_2)=\frac{1}{2\pi}\log\left|\frac{z_1-z_2}{1-z_1\bar z_2}\right|
$$
and at $t=0$, the Hadamard variation \cite{Hadamard} gives
\begin{equation}
\label{eq:Hadamard}
{\frac{d}{dt}}_{|t=0}G_t(z_1,z_2)=\int_{\mb U}\partial_nG(w,z_1)\partial_nG(w,z_2)s(w)d\lambda(w)
\end{equation}
with 
\begin{align}
\partial_nG(w,z)&=\frac{1}{2\pi}\Re\left(-w\partial_w\log\left(\frac{w-z}{1-w\bar z}\right)\right)=-\frac{1}{2\pi}\Re\left(\frac{w}{w-z}+\frac{w\bar z}{1-w\bar z}\right) \nonumber \\
&=-\frac{1}{2\pi}\Re\left(\frac{w}{w-z}+\frac{w^{-1} z}{1-w^{-1}z}\right)=-\frac{1}{2\pi}\Re\left(\frac{w+z}{w-z}\right) = - H(w,z). 
\label{eq:dnG-H}
\end{align}
The Hadamard variation formula is sometimes written with a minus sign in front of the integral in \eqref{eq:Hadamard}. This depends on the convention of the Green kernel and here $G$ corresponds to $\Delta^{-1}$ rather than $(-\Delta)^{-1}$. In this later case, when the domains are decreasing,  Brownian motion spends less time inside and the negative sign naturally arises.

Now, we move forward to the perturbation of the Green function associated with evolving domains in term of the Loewner measure. Associated to a measure $\mu$ on ${\mb U}$, we have a Loewner vector field:
\begin{equation}
\label{eq:Loewner-vector-field}
L_\mu(z) :=-\int_{\mb U}z\frac{z+w}{z-w}\mu(dw)
\end{equation}
and for a family $(\mu_t)$, we have the Loewner equation
$$\partial_tg_t(z)=L_{\mu_t}(g_t(z))$$
so that $g_t$ maps $D_t\subset {\mb D}$ to ${\mb D}$. $L_\mu$ operates on test functions via
\begin{equation}
\label{eq:operator-flow}
(L_\mu f)(z) :=\frac{d}{dt}_{|t=0}f\circ g_t(z)=L_\mu(z)\partial_zf+\overline {L_\mu(z)}\partial_{\bar z}f=2\Re(L_\mu(z)\partial_zf(z))
\end{equation}
if $f$ is real. Furthermore, $g'_t(z)=1+tL_\mu'(z)+o(t)$, $|g'_t(z)|=1+t\Re(L'_\mu(z))+o(t)$.

The following lemma expresses the variation of the Green function in evolving domains in the disk in terms of the Loewner measure.
\begin{Lem}
\label{lem:hadamard}
For every $z_1, z_2 \in \mb{D}$, we have
\begin{equation}
\label{eq:Hadamard-Variation}
{\frac{d}{dt}}_{|t=0}  G_t(z_1,z_2) =  2\pi \int_{\mb{U}} \partial_n G(w,z_1) \partial_n G (w,z_2) \mu(dw).
\end{equation}
\end{Lem}

\begin{proof}
By conformal invariance of the Green's function, $G_t(z_1,z_2)=G(g_t(z_1),g_t(z_2))$. Hence, recalling \eqref{eq:def-Green}, at $t=0$
$$
 {\frac{d}{dt}}_{|t=0} 2\pi G_t(z_1,z_2) =\Re\left(\frac{1}{z_1-z_2}(L_\mu(z_1)-L_{\mu}(z_2))+\frac{\bar z_2}{1-z_1\bar z_2}L_\mu(z_1)+\frac{\bar z_1}{1-z_2\bar z_1}L_\mu(z_2)\right).
$$
This is linear in $\mu$ and for $\mu=\delta_w$, this gives ($z^*=\bar z^{-1}$)
\begin{align*}
 {\frac{d}{dt}}_{|t=0} 2\pi G_t(z_1,z_2)&=-\Re\left(\frac{1}{z_1-z_2}(z_1\frac{z_1+w}{z_1-w}-z_2\frac{z_2+w}{z_2-w})+\frac{\bar z_2}{1-z_1\bar z_2}z_1\frac{z_1+w}{z_1-w}+\frac{\bar z_1}{1-z_2\bar z_1}z_2\frac{z_2+w}{z_2-w}\right)\\
&=-\Re\left(\frac{1}{z_1-z_2}(z_1-z_2+\frac{2w^2}{z_1-w}-\frac{2w^2}{z_2-w})+\frac{z_1}{z_2^*-z_1}\frac{z_1+w}{z_1-w}+\frac{ z_1}{z_2^*-z_1}\frac{w+z_2^*}{w-z_2^*}\right)\\
&=-\Re\left(1-\frac{2w^2}{(z_1-w)(z_2-w)}+\frac{2wz_1}{(z_1-w)(z_2^*-w)}\right)\\
&=-\Re\left(1-\frac{2w^2}{(z_1-w)(z_2-w)}+\frac{2z_1\bar z_2}{(z_1-w)(\bar w-\bar z_2)}\right)\\
&=2\Re\left(\frac 12+\frac{z_1}{w-z_1}+\frac{z_2}{w-z_2}+\frac{z_1z_2}{(w-z_1)(w-z_2)}+\frac{z_1\bar z_2}{(w-z_1)(\bar w-\bar z_2)}\right)\\
&=\Re\left(1+\frac{2z_1}{w-z_1}\right)\Re\left(1+\frac{2z_2}{w-z_2}\right)= \Re(\frac{w+z_1}{w-z_1})\Re(\frac{w+z_2}{w-z_2}) = (2\pi)^2 H(z_1,w) H(z_2,w).
\end{align*}
The conclusion follows from the relation $\partial_n G(w,z) = - H(w,z)$ from \eqref{eq:dnG-H}.
\end{proof}

This identifies  $\mu\leftrightarrow s(w) \frac{\lambda}{2\pi}$. Indeed, by taking $z_1 = z, z_2 = 0$ and integrating against a test function $f$ with a support in the bulk of the unit disk, we get $\int f^* d\mu = \int f^* s \frac{d\lambda}{2\pi}$ which implies $\mu(dw) = s(w) \frac{\lambda(dw)}{2\pi} $.

We briefly mention here some references which considered the Hadamard variation. Some relation between the Loewner and Hadamard variations was examined in \cite{Loewner-Hadamard}.  A Hadamard type formula was proved and  used in the study of couplings of SLE curves with the GFF in the work \cite{Hadamard-Izyurov}. The proof there follows different lines than the computations above. \cite{GFF-Hadamard} used the formula in the sole context of the GFF and in \cite{Viklund-Wang} it was proved and used in the context of  Loewner-Kufarev energy and foliations by chord-arc Jordan curves of the twice punctured Riemann sphere.

\paragraph{Random Loewner chains.}

We consider a process $(h_t,\mu_t)$ taking values in (fields, measures) on ${\mb U}$ and the radial Loewner chain $(g_t)$ associated to $(\mu_t)$. 
We suppose (as motivated by the martingales from Section \ref{sec:stationarity-martingales}) that
$$
t\mapsto \tilde h_t\circ g_t-\chi\log|g'_t|+\beta t=h_t\circ g_t+\alpha\log|g_t|-\chi\log|g'_t|+\beta t
$$
is a local martingale (taking values in the space of harmonic functions in a pointed neighborhood of 0) and that the Markov process $(h_t)$ satisfies for $p \in L^2(\mb U)$ of the form $p = f^*$ for a smooth compactly supported function $f$ on $\mb{D}$,
\begin{equation}
\label{eq:SDE}
d\int_{\mb U} ph_td\lambda=b_t(p)dt+\sigma_t(p)d\beta^p_t
\end{equation}
where $b$ is linear in $p$  and $\sigma^2(p)$ is quadratic in $p$, both being functions of $(h_t,\mu_t)$. Here, $\beta_t^p$ is a standard Brownian motion and we assume that the quadratic variation of the left-hand side of \eqref{eq:SDE} is absolutely continuous with a derivative almost everywhere positive. In the setup of interest which is the LQG metric growth, we have not checked that this holds but we think this is true.

From these assumptions, we would like to find what should be the expression of the generator of $(h_t)$. In the following lemma, we identify under these assumption the drift and quadratic variation at time $0$, in order to motivate Definition \ref{def:generator} below. The calculations involve the Hadamard variation formula and the Loewner equation.

\begin{Lem}
Under the above assumptions, for a test function $f \in C^\infty_c({\mb D})$, the drifts of the local martingales
$$
t\mapsto \int_{D_t}(\tilde h_t\circ g_t-\chi\log|g'_t|+\beta t)\tilde f_0d\lambda, \qquad t\mapsto (\int_{D_t}(\tilde h_t\circ g_t-\chi\log|g'_t|+\beta t)\tilde f_0d\lambda)^2-2\pi  \int_{D_t}\tilde f_0G_{D_t}\tilde f_0
$$
vanish when
\begin{equation}
\label{eq:b-condition}
b_0(f^*) = \int_{\mb D} h_0(D_\mu f)d\lambda - 2\pi\alpha\int_{\mb U}f^*d\mu + \chi\int_{\mb D}f\Re(L'_\mu) d\lambda -\beta \int_{\mb U} f^*d\lambda, \qquad \sigma^2(p)=(2\pi)^2  \int p^2d\mu_0.
\end{equation}
\end{Lem}

\begin{proof}
{\em First moment.} For a test function $f=\tilde f_0\in C^\infty_c({\mb D})$,
$$\int_{D_t}(\tilde h_t\circ g_t-\chi\log|g'_t|+\beta t)\tilde f_0d\lambda=\int_{\mb D} h_t\tilde f_td\lambda+\int_{D_t}(\alpha\log|g_t|-\chi\log|g'_t|+\beta t)\tilde f_0d\lambda$$
for short times, with $\tilde f_t=\tilde f_0\circ g_t^{-1}|(g_t^{-1})'|^2$, we have
\begin{align}
{\frac{d}{dt}}_{|t=0}\log |g_t(z)|&=\Re(\frac{L_\mu(z)}z)=2\pi\int_{\mb U}H(z,w)\mu(dw) \nonumber \\
{\frac{d}{dt}}_{|t=0}\log |g'_t(z)|&=\Re(L'_{\mu_0}(z)) \nonumber \\
{\frac{d}{dt}}_{|t=0}\tilde f_t&=-L_{\mu_0}f-2\Re(L'_{\mu_0})f=:-D_{\mu_0}f 
\label{def:dmu0}
\end{align}
by using for $v : \R \to \C$, $\frac{d}{dt}_{|t=0} \log  |v | =  \Re (\frac{v'(0)}{v(0)})$.

 We want the drift of the local martingale to vanish at time $t=0$, namely
$$
b_0(f^*)-\int_{\mb D} h_0(D_\mu f)d\lambda+2\pi\alpha\int_{\mb U}f^*d\mu-\chi\int_{\mb D}f\Re(L'_\mu) d\lambda +\beta \int_{\mb U} f^*d\lambda=0
$$
where $D_{\mu}f=2f\Re(L'_\mu)+L_\mu f$ and $f^* = H^* f$ for $f \in C_c^{\infty}(\mb{D})$. Taking the derivative in \eqref{eq:Loewner-vector-field} and recalling \eqref{eq:operator-flow}, we get
$$D_{\mu}f(z)=-2\int_{\mb U}\left(\Re\left(\partial_z\left(z\frac{z+w}{z-w}\right)\right)f(z)+
\Re\left(z\frac{z+w}{z-w}\partial_zf(z)\right)\right)\mu(dw)
$$
so that $D_{\mu}f$ admits the following expressions
\begin{equation}
\label{eq:dmu-expressions}
D_{\mu}f=2f\Re(L'_\mu)+L_\mu f = -2\int_{\mb U}\Re\left(\partial_z\left(z\frac{z+w}{z-w}f(z)\right)\right)\mu(dw).
\end{equation}

{\em Second moment.} 
We want the drift of
$$t\mapsto (\int_{D_t}(\tilde h_t\circ g_t-\chi\log|g'_t|+\beta t)\tilde f_0d\lambda)^2-2\pi  \int_{D_t}\tilde f_0G_{D_t}\tilde f_0$$
at $t=0$ to vanish. Using the Hadamard variation formula \eqref{eq:Hadamard-Variation},
$$
{\frac{d}{dt}}_{|t=0}\int_{D_t}f_1G_{D_t}f_2d\lambda  = 2\pi \int_{\mb{D}^2} \int_{\mb{U}}  H(z_1,w) f(z_1) H(z_2,w) f(z_2) \mu_0(dw) d\lambda d\lambda =  2\pi\int_{\mb U}f^*_1f^*_2d\mu_0
$$ 
so we want, with $p=f^* = H^* f$ and with $b = b_0$, $\sigma = \sigma_0$, $\sigma^2(p)=(2\pi)^2  \int p^2d\mu_0$.
\end{proof}

This motivates the following definition of the generator.

\begin{Def}[Generator]
\label{def:generator}
We define an operator ${\mc L}_{\alpha, \chi, \beta}$ on the set of test functionals of the form $F(h)=\psi(\int_{\mb D}f_1 h d\lambda,\dots, \int_{\mb D}f_n h d\lambda)$, where $\psi \in C_c^{\infty}(\mb{R}^n)$ and $f_i \in  C_c^{\infty}(\mb{D} )$,
\begin{equation}
\label{def:L}
{\mc L}_{\alpha, \chi, \beta} F := \sum_i b(p_i)\psi_i+\frac 12\sum_{ij}\sigma(p_i,p_j)\psi_{i,j} 
\end{equation}
with $p_i = f_i^*$,  and
\begin{equation}
\label{eq:drift-diffu}
\sigma(p,q)=  4\pi^2  \int_{\mb{U}} pqd\mu, \qquad b(f^*) =  \int_{\mb D} h(D_\mu f)d\lambda-2\pi\alpha\int_{\mb U}f^*d\mu+\chi\int_{\mb D}f\Re(L'_\mu) d\lambda -\beta \int_{\mb U} f^*d\lambda .
\end{equation}
\end{Def}
Here, for $\psi \in C^{\infty}(\mb{R}^n)$, we denote by $\psi_i$ its partial derivative with respect to its $i$-th coordinate and by $\psi_{i,j}$ its second order partial derivative with respect to the $i$-th and $j$-th coordinates.

In the following section, given the expression of the generator we will be interested in investigating the following additional property: if $\phi$ is a Dirichlet GFF in ${\mb D}$ independent of the rest, we want on $D_t$,
\begin{equation}
\label{eq:invariance-general-parameters}
(\phi+\tilde h_t)\circ g_t-\chi\log|g'_t|+\beta t\stackrel{(d)}{=}\phi_0+\tilde h_0
\end{equation}
from stationarity and strong Markov (this is an identity of $\sigma$-finite measures).

\section{Boundary localization and invariance}

\label{sec-invariance}

In \ref{sec:invariance-general-metric}, we write a formal invariance condition for a general LQG metric or, more generally in the framework considered in the previous section. Then, in \ref{sec:discussion-boundary-measure}, we discuss the driving measure $\mu$ of the Loewner equation. In particular, from that point forward we consider a parameter $\xi >0$ and the GMC $\mu = e^{-\xi h}$. The goal of the rest of the section is to write a more transparent invariance equation associated with the stationarity \eqref{eq:invariance-general-parameters} and to find an explicit invariant boundary measure $\rho(dh)$ associated with \eqref{eq:beta=0}. In \ref{sec:Trace-fields-and-IBP}, we consider a natural family of boundary Gaussian fields and recall their characterizing integration by parts formula. In \ref{sec:boundary-localization-kernelV}, we derive a new expression of the generator $\mc{L}_{\alpha, \chi, \beta}$ in which all terms are boundary localized which we use in \ref{sec:invariance-condition} to prove the invariance of a measure on boundary fields $\rho(dh)$ satisfying several conditions. 

\subsection{Invariance condition for the boundary field}
\label{sec:invariance-general-metric}

To obtain the expression of $\mc{L}_{\alpha, \chi, \beta}$ above, we computed martingales by averaging out the Dirichlet GFF and relied on stochastic calculus. Here, we use instead Gaussian integration by parts to carry the averaging over the Dirichlet GFF.  

\begin{Prop}
\label{Prop:invariance-condition}
We consider a test functional $F(\phi,h)=\psi(\int_{\mb D} f_1(\phi+\tilde h)d\lambda,\dots,\int_{\mb D} f_n(\phi+\tilde h)d\lambda )$ where $f_i\in C^\infty_c({\mb D}\setminus\{0\}),  \psi\in C^\infty_c(\R)$. We also denote  $\tilde\psi(\cdot)=P_\Sigma\psi(\cdot+ \alpha \int f_1 \log |\cdot | d\lambda,\dots)$, $P_\Sigma\psi(x)=\E(\psi(x+\Sigma^{1/2}N))$, where $N$ is a standard Gaussian in $\R^n$ and $\Sigma_{i,j}=-2\pi  \int_{\mb{D}} f_iGf_j d\lambda$  so that $\tilde\psi(\int_{\mb{D}} f_1 h d\lambda,\dots)=P_\Sigma\psi(\int_{\mb{D}} f_1\tilde h d\lambda,\dots)$. Then, the invariance \eqref{eq:framework-invariance-eq} reads for the boundary field
\begin{align}
\label{eq:invariance-cond}
0 = & \int \left( \sum_i\left(\int_{\mb D}(D_{\mu}f_i) hd\lambda+ \alpha \int_{\mb D}(D_{\mu}f_i) \log | \cdot | d\lambda+\chi\int_{\mb D}f_i\Re(L'_\mu)d\lambda-\beta\int_{\mb D}f_id\lambda \right)\tilde\psi_i(\int f_1h d \lambda,\dots, \int f_n h d \lambda) \right. \nonumber \\
& \left. -2 \pi  \sum_{i,j}\left(\int_{\mb D}f_iG(D_{\mu}f_j)\right) \tilde\psi_{ij}(\int_{\mb{D}} f_1h d\lambda,\dots, \int_{\mb{D}} f_nh d \lambda) \right) \rho(dh).
\end{align}
\end{Prop}

\begin{proof}
Recalling the identity \eqref{eq:Shift-equation},
\begin{align*}
\int_{\mb D}f(\phi_t+\tilde h_t)d\lambda &=\int_{D_t}f\circ g_t
(\phi+\tilde h+\chi\log|g'_t|-\beta t)|g'_t|^2  d\lambda 
\end{align*}
so
$$\frac{d}{dt}_{|t=0}\int_{\mb D}f(\phi_t+\tilde h_t)d\lambda=\int_{\mb D}(D_\mu f)(\phi+\tilde h)d\lambda+\chi\int_{\mb D}f\Re(L'_\mu)d\lambda-\beta\int_{\mb D}fd\lambda$$
and  for $F(\phi,h)=\psi(\int_{\mb D} f(\phi+\tilde h)d\lambda)$ where $f\in C^\infty_c({\mb D}\setminus\{0\}), \psi\in C^\infty_c(\R)$,
\begin{equation}
\label{eq:gen-with-dirichlet}
\begin{split}
\mc{G} F(\phi,h) :=  & {\frac{d}{dt}}_{|t=0}(F\circ\theta_t)(\phi,h) \\
  = & \left(\int_{\mb D}(D_\mu f)(\phi+\tilde h)d\lambda+\chi\int_{\mb D}f\Re(L'_\mu)d\lambda
-\beta\int_{\mb D}fd\lambda \right) \psi'(\int_{\mb D} f(\phi+\tilde h) d\lambda)
\end{split}
\end{equation} 
Now, we want to average $\mc{G} F(\phi,h)$ over $\phi$. This requires a Gaussian integration by parts (see, e.g. \eqref{eq:basic-ibp-1} in the appendix), because of the the first term. The averaging over $\phi$ works as follows. 
\begin{align*}
\int \psi(x+\int_{\mb{D}} f\phi d\lambda)\nu(d\phi)&=P_{\sigma^2}\psi(x)\\
\int \left(\int_{\mb{D}} g\phi d\lambda \right)\psi(x+\int_{\mb{D}} f\phi d\lambda)\nu(d\phi)&=-2\pi  \left(\int_{\mb{D}} fGg d\lambda\right)P_{\sigma^2}\psi'(x)
\end{align*}
where $(P_t)$ is the heat kernel on $\R$ and $\sigma^2= \mathrm{Var}(\int_{\mb{D}} f\phi d\lambda)=-2\pi  \int_{\mb{D}} fGf d\lambda$, recalling that $G = \Delta^{-1}$ and $G_D = -2\pi G$. It follows that
\begin{align*}
\int \mc{G} F(\phi,h) \nu(d\phi)= & \int \left(\int_{\mb D}(D_{\mu}f)(\phi+\tilde h)d\lambda+\chi\int_{\mb D}f\Re(L'_\mu)d\lambda-\beta\int_{\mb D}fd\lambda\right)\psi'(\int_{\mb{D}} f(\phi+\tilde h) d\lambda)\nu(d\phi)\\
= & \left(\int_{\mb D}(D_{\mu}f)\tilde hd\lambda+\chi\int_{\mb D}f\Re(L'_\mu)d\lambda-\beta\int_{\mb D}fd\lambda\right)P_{\sigma^2}\psi'(\int_{\mb{D}} f\tilde h d\lambda) \\
& -2\pi  \left(\int_{\mb D}fG (D_{\mu}f) d\lambda \right)P_{\sigma^2}\psi''(\int_{\mb{D}} f\tilde h d\lambda) 
\end{align*}

The generalization to general functional $F(\phi,h)=\psi(\int_{\mb D} f_1(\phi+\tilde h)d\lambda,\dots,\int_{\mb D} f_n(\phi+\tilde h)d\lambda )$ is almost identical. The main differences are that 
$$  \mc{G} F(\phi,h) = \sum_{i}  \left(\int_{\mb D}(D_\mu f_i)(\phi+\tilde h)d\lambda+\chi\int_{\mb D}f_i\Re(L'_\mu)d\lambda
-\beta\int_{\mb D}f_id\lambda \right) \psi_i(\int_{\mb D} f_1(\phi+\tilde h) d \lambda, \dots)$$
and that we use the Gaussian integration by parts \eqref{eq:basic-ibp-2} to obtain
\begin{align*}
\int \psi(x_1+\int_{\mb{D}} f_1\phi d\lambda,\dots)\nu(d\phi)&=P_\Sigma\psi(x_1,\dots)\\
\int \left(\int_{\mb{D}} g\phi\right)\psi(x_1+\int_{\mb{D}} f_1\phi d\lambda,\dots)\nu(d\phi)&=-\sum_i2\pi  \left(\int_{\mb{D}} f_iGg d\lambda \right)P_\Sigma\psi_i(x)
\end{align*}
where $\Sigma_{i,j}=-2\pi  \int_{\mb{D}} f_iGf_j d\lambda$ and $P_\Sigma\psi(x)=\E(\psi(x+\Sigma^{1/2}N))$ where $N$ is a standard Gaussian in $\R^n$. 
\end{proof}

\subsection{Driving measure}
\label{sec:discussion-boundary-measure}

In this section, we provide some arguments explaining why the Gaussian multiplicative chaos measure $e^{- \xi h}$ is a natural measure to appear as the Loewner equation measure.  We consider first the following lemma.

\begin{Lem} 
\label{Lem:driving-cv}
Assume that distances induced by Riemannian metrics $(d^\eps)$ on $\overline{\mb{D}}$ converge to a length metric $d$ on every compact of $\overline{\mb{D}} \backslash \{ 0 \}$ w.r.t. the uniform topology, that for all $\eps > 0$, $d^\eps(0,\mb{U}) = d(0,\mb{U}) = \infty$ and that a local (compact sets of $\overline{\mb{D}} \backslash \{ 0 \}$) uniform (in $\eps$) modulus of continuity between these and the Euclidean distance hold. Furthermore, with $D_t$ the connected component of the complement of the $d$ growth from $\mb{U}$ at time $t$ containing the origin, suppose that $\partial D_t$ is a Jordan curve. Then, the associated driving measures $\mu^{\eps}(dt,dw)$ on $[0,T] \times \mb{U}$ converge to $\mu(dt,dw)$ w.r.t. weak topology. Here, the associated driving measures refer to the driving measures for the filled $d^{\eps}$-neighborhoods of the boundary.
\end{Lem}

\begin{proof}
We denote by $D_t^{\eps}$ the connected component of the complement of the $d_{\eps}$ growth from $\mb{U}$ at time $t$ containing the origin. We use the criterion \cite[Theorem 2.3]{caratheodory-criterion}. It states that the Carath\'eodory convergence follows from establishing that $D_t$ and $D_t^{\eps}$ have arbitrarily good common interior approximations: for each $\delta > 0$, there exists $\eps_0 >0$ and a connected set $K_{\delta}$ containing the origin  such that $K_{\delta} \subset D_{t} \cap \bigcap_{\eps < \eps_0} D_{t}^{\eps}$ and, additionally, $d_L(x, \partial D_t) < \delta$, $d_L(x, \partial D_t^{\eps}) < \delta$ for every $x \in \partial K_{\delta}$, where $d_L$ is the Lebesgue distance on $\mb{D}$.  We choose $K_{\delta}  :=\overline{D_{t+\delta}} \supset D_t$. As mentioned in the introduction, it is known that the boundary of a filled LQG ball is a Jordan curve. Denote by $\Gamma_{\delta}$ the Jordan curve associated with $\partial K_{\delta}$. $\Gamma_{\delta}$ separates $\partial \mb{D}$ and the origin. Take $\eps_0$ small enough so that $\| d^{\eps} - d \|_{\infty} < \delta$ for every $\eps < \eps_0$. Then, $d_{\eps}(\partial \mb{D},x) \geq d(\partial \mb{D},x) -\delta \geq t$. So, altogether we obtain $K_{\delta} \subset D_t^{\eps}$. Finally, the distance bounds follow by the uniform modulus of continuity assumption. Hence  the uniform (in time) Carath\'eodory convergence, i.e., the convergence of the normalized Riemann uniformization maps $(g_t^{\eps})^{-1}$ on compact subsets (by the Carath\'eodory kernel theorem). 

Now, the convergence of $(g_t^{\eps})^{-1}(0)$ to $g_t^{-1}(0)$ implies the convergence of $\int_0^t | \mu_s^\eps | ds$ to $\int_0^t | \mu_s | ds$ hence the tightness of the Borel measures $\mu^\eps$ on $[0,T] \times \mb{U}$. Denote by $\tilde{\mu}$ a subsequential limit. Then, from
$$
\partial_t g_t^\eps = \int - g_t^\eps(z) \frac{g_t^\eps(z)+w}{g_t^\eps(z)-w} \mu_t^\eps(dw)
$$
the convergence implies, in integrated form,
$$
\partial_t g_t  = \int - g_t(z) \frac{g_t(z)+w}{g_t(z)-w} \tilde{\mu}_t(dw) \qquad \text{and} \qquad
\partial_t g_t  = \int - g_t(z) \frac{g_t(z)+w}{g_t(z)-w} \mu_t(dw)
$$
so  $\tilde{\mu}$ and $\mu$ are two driving measures for $(g_t)$ and are therefore  equal. Indeed, this follows from the convergence of $g_t^{\eps}$ to $g_t$ on every compact subset of $D_t = g_t^{-1}(\mb{D})$, which itself follows from the convergence of $(g_t^{\eps})^{-1}$ to $g_t^{-1}$ on every compact subset of $\mb{D}$: fix $y \in K_2 \subset \subset D_t$ and set $x= g_t(y)$, $K_1 = g_t(K_2)$. We use the convergence of $(g_t^\eps)^{-1}$ to $g_t^{-1}$ on $K_1$. For $\delta >0$ and $\eps$ small enough, $(g_t^\eps)^{-1}(x) \in B(g_t^{-1}(x),\delta)=B(y,\delta)$ and
$$
| g_t^\eps (y) - g_t(y) | = | g_t^\eps (y)  - g_t^\eps ((g_t^\eps)^{-1}(x) )  | \leq \| (g_t^\eps)' \|_{L^\infty(B(y,\delta))} | (g_t^\eps)^{-1}(x) - g_t^{-1}(x) |
$$
By a distortion estimate, $|(g_t^\eps)'(w) | \leq d_L(w,\partial_t^\eps)^{-1}$, where we recall that $d_L$ is the Lebesgue distance on $\mb{D}$, from which the uniform convergence follows.  \end{proof}

From now on we will consider the Gaussian multiplicative chaos measure
\begin{equation}
\label{eq:GMC-driving}
\mu_h = e^{-\xi h} = e^{- \xi h (w)} \lambda(dw)
\end{equation}
That this is a good candidate to describe the driving measure for the LQG metric growth can be motivated by the following remarks.

\textbf{Remark.}
\begin{enumerate}
\item
First, $\mu_t$ depends only on the local growth associated with the length metric $e^{\xi (\phi_t + h_t+ \frac{t}{\xi})} ds$ and $\phi_t$ does not carry any information on the boundary $\mb{U}$ (it has zero boundary values, $\bigcap_{r>0} \sigma ( (\phi_t)_{| \mb{D} \backslash e^{-r} \overline{\mb{D}}})$ is trivial) so the local growth only depends on $h_t$ and one expects $\mu_t = G_t(h_t)$ for some function $G_t$.

\item
Consider a smooth metric $e^{\xi \varphi^{\eps}} ds$ with $\varphi^{\eps}$ smooth in $\overline{\mb{D}}$. Then the associated driving measure process, parametrized by distance and denoted by  $(\bar{\mu}_{s}^\eps)_{s>0}$ satisfies $\bar{\mu}_{0}^\eps = e^{-\xi \varphi^\eps} \frac{\lambda}{2\pi}$ (for a smooth metric, at $w \in \mb{U}$, the inward local distance associated with $\delta r$ is $\delta t = e^{\xi \varphi^\eps(w)} \delta r$, so the inward speed at $w$ is $\delta r/\delta t = e^{-\xi \varphi^\eps (w)}$ and the result follows from the remark below Lemma \ref{lem:hadamard}).  By removing the smoothing, namely by assuming a convergence $\varphi^\eps \to \varphi$ and of the approximating metrics, one expects that $\bar{\mu}_0 = e^{-\xi \varphi} \frac{\lambda}{2\pi}$.

\item
Assume $\mu_t = G(h_t)$ for some function $G$. We would like to say that $G(h)$ is a GMC $e^{-\xi h}$. In particular, we would like to prove at least that $G(h+x) = e^{-\xi x} G(h)$. Take $\underline{\phi_\C} = \phi_\C + f(D_{\phi_\C} (0, \cdot))$. Then, recalling that $D_{\underline{\phi_\C}}(0,z) = F( D_{\phi_\C}(0,z))$ where $F(r) = \int_0^r e^{\xi f(s)} ds$, we have $B_t^{\underline{\phi_\C}} = B_{u(t)}^{\phi_\C} $ where
$$
u(t) = \log F^{-1} (e^t)
$$
by considering the $D_{\underline{\phi_\C}}$ ball of radius $e^t$ and the $D_{\phi_{\C}}$ ball of radius $e^{u(t)}$. This gives
$$
\underline{\phi}_t + \underline{\tilde{h}}_t = \phi_{u(t)} + \tilde{h}_{u(t)} + f( D_{\phi_{\C}}(0,\hat{g}_{u(t)}^{-1} (\cdot))) + \frac{u(t)-t}{\xi}
$$
When $z$ in on the boundary $\mb{U}$, the term $f( D_{\phi_{\C}}(0,\hat{g}_{u(t)}^{-1} (z)))$ gives $f(e^{u(t)}) = f(F^{-1}(e^t))$. By using the Loewner's equation, the relation between the uniformizing maps and 
$$
u'(t) = \frac{d}{dt} \log F^{-1}(e^t) = \frac{e^t}{F'(F^{-1}(e^t)) F^{-1}(e^t)} = \frac{e^t e^{-\xi f(F^{-1}(e^t))}}{F^{-1}(e^t)} = e^{-\xi [f(F^{-1}(e^t)) + \frac{u(t)-t}{\xi} ]}
$$
we get 
$$
G(\tilde{h}_{u(t)} + f(F^{-1}(e^t)) + \frac{u(t)-t}{\xi}) = G(\underline{\tilde{h}}_t) = e^{-\xi [f(F^{-1}(e^t)) + \frac{u(t)-t}{\xi} ]} G(\tilde{h}_{u(t)}).
$$
\end{enumerate}

Although the above lemma and remark suggest that  that the Loewner measure of the LQG growth is a multiplicative chaos $e^{- \xi h}$ (and note that $h$ may not be Gaussian), this remains an open problem. In particular, a characterization of this measure by axioms including locality and shift remains open. In the context of QLE, in \cite{MS16}  the authors expect QLE$(\gamma^2,0)$ (which they construct rigorously using SLE for $\gamma = \sqrt{8/3}$) to generate the metric balls of a distance function, the $\gamma$-LQG metric.  At that time the metric was not constructed and this was an indirect approach to construct it. We refer the reader to Section 3.3 of \cite{MS16} for earlier considerations and heuristics with this approach, consistent with the Loewner measure being a multiplicative chaos $e^{- \xi h}$.

As \eqref{eq:GMC-driving} is a  multiplicative chaos, some assumptions on $h$ are needed to make sense of it. In the case of the LQG metric growth, it is not known that the trace of the free field on boundary of LQG balls has an appropriate distribution to be exponentiated in that sense (e.g., absolutely continuous w.r.t. to a log-correlated Gaussian field). 

In the following section, we consider a measure on fields $h$ for which the GMC $e^{- \xi h}$ is well defined. If one could 1) find an invariant distribution, then 2) construct from it the growth process, 3) show that it gives rise to a distance function which satisfies the axioms of the characterization of LQG metric by Gwynne and Miller, then as a consequence the distribution of the field would be obtained.

\subsection{Trace fields and integration by parts}
\label{sec:Trace-fields-and-IBP}

We introduce here the measure $\rho(dh)$ we will be working with in all subsequent sections. The goal of Section \ref{sec:invariance-condition} will be to retrieve an invariance condition \eqref{eq:invariance-cond} for well chosen parameters, when the measure $\mu$ is the GMC measure $e^{-\xi h}$ (see the motivations from Section \ref{sec:discussion-boundary-measure}). The integration by parts formula of the measure (which characterizes it) will be the starting point of the proof in Section \ref{sec:invariance-condition}, we will use it with well-chosen test functionals. However, that cannot be implemented directly and an important intermediate step is to boundary localize the terms appearing in \eqref{def:L} \eqref{eq:drift-diffu} or \eqref{eq:invariance-cond}, which is the content of Section \ref{sec:boundary-localization-kernelV}

\paragraph{Trace fields.} We consider a measure $\rho(dh)$ of the form
\begin{equation}
\rho(dh)=e^{-\VV(h)}{\mc D}h
\end{equation}
where $\VV$ is a smooth potential, in the sense that $D_k\VV$ is defined $\rho$-a.e. for $k$ smooth on ${\mb U}$, and admits the gradient representation
$$D_k\VV(h)=\int_{\mb U}k(D\VV(h))d\lambda$$
where $D\VV(h)$ is a distribution on ${\mb U}$ for $\rho$-a.e. $h$. Note that $\mc{D}h$ alone is never used, this is formally an infinite dimensional Lebesgue measure (translation invariant). The measure $\rho(dh)$ makes sense and falls in the classical setup of Gaussian measures on Banach/Hilbert spaces for some appropriate quadratic form $\mc{V}$ (and non-Gaussian measures can be considered by adding a positive term to such $\mc{V}$'s).

By analogy with LCFT it is natural to consider 
$$ 
\VV(h)  = \frac{1}{4 \pi } \int_{\mb D} | \nabla h |^2 d\lambda  + c \int_{\mb U} h d\lambda + b \int_{\mb{U}} e^{\tau h} d\lambda
$$
where $\tau = \frac{\gamma}{2}$ which is also equal to $2\xi$ in the case of pure gravity. In what follows, we specialize to the case $b = 0$, which we can write as
\begin{equation}
\label{eq:V-exp-DV}
\VV(h) = - \frac{1}{4 \pi } \int_{\mb U} h \partial_n H h d\lambda +  2\pi c \dashint_{\mb U} h d\lambda \quad \text{so} \quad D\VV(h) = - \frac{1}{2 \pi } (\partial_n Hh) + c.
\end{equation}
It is almost the same as the measure \eqref{def:rho0}, rigorously defined in Section \ref{subsec:gff}, except that the one-dimensional Lebesgue measure is reweigthed.

\paragraph{Fr\'echet derivatives and integration by parts.} By recalling the integration by parts formula \eqref{prel:ibp}, we see here that $\rho$ satisfies the following integration by parts formula
$$
\int (D_kF)\rho(dh) = \int F(h)(D_k\VV)(h)\rho(dh)=  \int F(h) \left( -\frac{1}{2\pi } \int_{\mb{U}} k \partial_n H h d\lambda + c \int_{\mb{U}} k d\lambda   \right) \rho(dh)   
$$
The additional term compared to \eqref{prel:ibp} comes from the zero modes. An explicit potential $\mc{V}$ and the fact that $D_f\int g d\mu =-\xi\int (fg) d\mu$ for test functions $f,g$ (see below \eqref{eq:weighting-mass}) will be crucial in our proofs. In particular, this second relation could be satisfied in a non-Gaussian setup where $\mu$ is rather only a multiplicative chaos.

We want to use this formula to retrieve the invariance condition \eqref{eq:invariance-cond}. For a functional  
$$
F(h)=\left(\int_{\mb U}\ell d\mu\right)\varphi(\int_{\mb D}f_1h d\lambda,\cdots, \int_{\mb D}f_n h d\lambda),
$$
where $\ell$ a smooth function on ${\mb U}$, $f_i \in C^{\infty}_c(\mb D \setminus \{ 0 \})$ and $\varphi$ has compact support, we have 
$$
D_kF(h)=\left(-\xi\int_{\mb U}\ell kd\mu\right)\varphi(\int_{\mb D}f_1h d\lambda,\dots, \int_{\mb D}f_n h d\lambda)+
\sum_i\left(\int_{\mb D}f_iHk d\lambda \right)\left(\int_{\mb U}\ell d\mu\right)\varphi_i(\int_{\mb D}f_1h d\lambda,\dots, \int_{\mb D}f_1h d\lambda).
$$ 
So, the integration by parts formula gives 
\begin{equation}
\label{eq:IBP-potential}
\int\left(\left( -\int_{\mb U} k(D\VV)(h) d\lambda \int_{\mb U}\ell d\mu -\xi\int_{\mb U}\ell kd\mu \right)\varphi+\sum_i\left(\int_{\mb D}f_iHk d\lambda \right)\left(\int_{\mb U}\ell d\mu\right)\varphi_i\right)\rho(dh)=0.
\end{equation}
In order to assure integrability, we suppose there is at least one $f_i$ for which $\int_{\mb D} f_i d\lambda \neq 0$. This will be discussed in details in Section \ref{sec:integrability}.

For $\varphi, f_i$'s fixed, this is bilinear in $\ell, k$. Let $R_f:\U^2\rightarrow \R$ be a smooth kernel depending linearly on a function $f\in C^\infty_c({\mb D})$ and admitting a representation
\begin{equation}
\label{def:Rf}
R_f(w,w')=\int k_u(w)\ell_u(w')d \Lambda(u).
\end{equation}
By bilinearity, integrating over $d \Lambda$, we can write
\begin{equation}
\label{eq:IBP-General}
\begin{split}
\int\left(\left(-\int_{{\mb U}^2} (D\VV)(h)(w)R_f(w,w') d\lambda(w) d\mu(w')-\xi\int_{\mb U}R_f(w,w)d\mu(w) \right)\varphi \right.   \\ 
\left.+\sum_j\left(\int_{{\mb U}^2}(H^*f_j)(w)R_f(w,w') d\lambda(w) d\mu(w')\right)\varphi_i\right)\rho(dh)=0.
\end{split}
\end{equation}

\subsection{Boundary localization, kernel $V$ and contractions} 
\label{sec:boundary-localization-kernelV}

In this section, our goal is to boundary localize the terms appearing in \eqref{def:L} and \eqref{eq:drift-diffu} or \eqref{eq:invariance-cond}. The main result corresponds to Proposition \ref{Prop:gen-loc}, which gathers intermediate results from Lemma \ref{lem:boundary-localization} and  involves the kernels $V_p$ constructed in Lemma \ref{lem:kernel-V}. In particular, we will use in the following section the properties of these kernels which are derived in Lemma \ref{lem:prop-kernel}.

To compare the invariance condition \eqref{eq:invariance-cond} with \eqref{eq:IBP-General}, we need the following deterministic lemma. For this  lemma, $\mu$ is not necessarily of the form $e^{-\xi h}$ but is just a Borel measure on $\mb{U}$ with finite total mass. Furthermore, we recall that $*$ is defined is  Section \ref{sec:notation} and denotes the adjoint of the harmonic extension.

\begin{Lem}
\label{lem:boundary-localization}

For $f \in C^\infty_c({\mb D}\setminus\{0\})$, recalling that $L_{\mu}$ is given by \eqref{eq:Loewner-vector-field} and that $D_{\mu} f$ is given by \eqref{eq:dmu-expressions}, we have
\begin{equation}
\label{eq:Boundary-sing}
 \int_{\mb D}(D_{\mu} f) \log |\cdot| d\lambda=-2\pi\int_{\mb U}(H^*f)d\mu
\end{equation}
and
\begin{equation}
\label{eq:Boundary-mu}
\int_{\mb D}f\Re(L'_\mu)d\lambda=2\pi\int_{\mb U}(f^*-\partial_nHf^*)d\mu
\end{equation}
and for $i,j$, setting $p_i = f_i^*$, $p_j = f_j^*$,
\begin{equation}
\label{eq:Boundary-last}
-  \int_{\mb D} f_i G (D_{\mu} f_j) d\lambda -  \int_{\mb D} (D_{\mu} f_i) G f_j  d\lambda = 2 \pi \int_{\mb U} p_i p_j d\mu.
\end{equation}
\end{Lem}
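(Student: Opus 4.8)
The plan is to handle the singularity identity \eqref{eq:Boundary-sing} and the bilinear identity \eqref{eq:Boundary-last} by a common mechanism --- pulling the bulk integral back along the Loewner flow and invoking the Hadamard variation \eqref{eq:Hadamard-Variation} --- and to establish the middle identity \eqref{eq:Boundary-mu} by a direct series expansion together with the spectral description of the Dirichlet-to-Neumann operator.

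\textbf{Identities \eqref{eq:Boundary-sing} and \eqref{eq:Boundary-last}.} Let $(g_t)$ be the radial Loewner chain driven by $\mu$ (at time $0$), $g_t\colon D_t\to{\mb D}$. By \eqref{def:dmu0}, $D_\mu f=-{\frac{d}{dt}}_{|t=0}\big(f\circ g_t^{-1}\,|(g_t^{-1})'|^2\big)$, and since $f$ is compactly supported in ${\mb D}$ we have $\mathrm{supp}(f)\subset D_t$ for $t$ small. For \eqref{eq:Boundary-sing} I write $\log|\cdot|=2\pi G(0,\cdot)$ with $G=-\frac1{2\pi}G_D$; after the change of variables $z=g_t(\zeta)$ (under which $f\circ g_t^{-1}\,|(g_t^{-1})'|^2\,d\lambda$ pulls back to $f\,d\lambda$ and $u(z)$ to $u(g_t(\zeta))$), and using $g_t(0)=0$ together with conformal invariance of $G$, one obtains $\int_{\mb D}(D_\mu f)\,G(0,\cdot)\,d\lambda=-{\frac{d}{dt}}_{|t=0}\int_{\mb D}f(\zeta)\,G_t(0,\zeta)\,d\lambda(\zeta)$, where $G_t$ is the Green kernel of $D_t$. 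Differentiating under the integral and applying \eqref{eq:Hadamard-Variation} with $\partial_nG(w,z)=-H(z,w)$ and $H(0,w)=\frac1{2\pi}$ gives ${\frac{d}{dt}}_{|t=0}G_t(0,\zeta)=\int_{\mb U}H(\zeta,w)\,\mu(dw)$, so $\int_{\mb D}(D_\mu f)\,G(0,\cdot)\,d\lambda=-\int_{\mb U}(H^*f)\,d\mu$, and multiplying by $2\pi$ yields \eqref{eq:Boundary-sing}. For \eqref{eq:Boundary-last}, its left-hand side equals ${\frac{d}{dt}}_{|t=0}\iint_{{\mb D}^2}\big(f_i\circ g_t^{-1}|(g_t^{-1})'|^2\big)(z)\,G(z,z')\,\big(f_j\circ g_t^{-1}|(g_t^{-1})'|^2\big)(z')\,d\lambda\,d\lambda$; the same change of variables in both variables plus conformal invariance turn this into ${\frac{d}{dt}}_{|t=0}\iint_{{\mb D}^2}f_i(\zeta)\,G_t(\zeta,\zeta')\,f_j(\zeta')\,d\lambda\,d\lambda$, and \eqref{eq:Hadamard-Variation} with $\int_{\mb D}f_i\,\partial_nG(w,\cdot)\,d\lambda=-p_i$ produces $2\pi\int_{\mb U}p_ip_j\,d\mu$.

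\textbf{Identity \eqref{eq:Boundary-mu}.} Here I compute the two sides explicitly. Expanding $z\frac{z+w}{z-w}=z+2w+\frac{2w^2}{z-w}$ in \eqref{eq:Loewner-vector-field} gives $L'_\mu(z)=-|\mu|+2\int_{\mb U}\frac{w^2}{(z-w)^2}\,\mu(dw)$. For $z\in\mathrm{supp}(f)\subset{\mb D}$ and $w\in{\mb U}$ the expansion $\frac{w^2}{(z-w)^2}=\sum_{n\ge0}(n+1)(z/w)^n$ converges absolutely; substituting, interchanging the $\mu$-integral with the sum, and writing $c_n=\int_{\mb D}z^nf\,d\lambda$ (so $c_0=\int_{\mb D}f\,d\lambda\in\R$), the two $|\mu|$-contributions combine into
$$\int_{\mb D}f\,\Re(L'_\mu)\,d\lambda=|\mu|\,c_0+2\int_{\mb U}\sum_{n\ge1}(n+1)\,\Re(\bar w^nc_n)\,\mu(dw).$$
On the other hand, expanding the Poisson kernel $H(z,w)=\frac1{2\pi}\Re\frac{w+z}{w-z}=\frac1{2\pi}\big(1+2\sum_{n\ge1}\Re(z^n\bar w^n)\big)$ gives $f^*(w)=(H^*f)(w)=\frac1{2\pi}c_0+\frac1\pi\sum_{n\ge1}\Re(\bar w^nc_n)$; since $\Re(\bar w^nc_n)$ is a combination of the Fourier modes $e_{2n-1},e_{2n}$, on which $\partial_nH$ acts by multiplication by $-n$, one gets $(\partial_nHf^*)(w)=-\frac1\pi\sum_{n\ge1}n\,\Re(\bar w^nc_n)$, hence $2\pi\big(f^*(w)-(\partial_nHf^*)(w)\big)=c_0+2\sum_{n\ge1}(n+1)\,\Re(\bar w^nc_n)$. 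Integrating against $\mu$ reproduces the display above, which is \eqref{eq:Boundary-mu}. (Alternatively, two integrations by parts in ${\mb D}$ --- with no boundary contributions, since $f$ is compactly supported in ${\mb D}\setminus\{0\}$ --- reduce the left-hand side to $\int_{\mb U}\big(\int_{\mb D}\Re\big(z\frac{z+w}{z-w}\partial_zf\big)\,d\lambda\big)\mu(dw)$, after which the same Taylor expansion concludes.)

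\textbf{Main obstacle.} Given \eqref{eq:Hadamard-Variation} and the spectrum of $\partial_nH$ the computations are essentially mechanical, and the care needed is twofold. First, for the flow-based arguments one must justify that $\mathrm{supp}(f_i)\subset D_t$ for $t$ small and that $t\mapsto\int f\,u(g_t(\cdot))\,d\lambda$ (and its bilinear analogue) is differentiable at $0$ with the derivative obtained by differentiating under the integral; both follow from smoothness of the integrands on the fixed compact supports, which stay at positive distance from the moving boundary $\partial D_t$, together with the Hadamard variation being smooth up to the diagonal (the latter point would matter if one instead routed \eqref{eq:Boundary-mu} through the regular/Robin part of $G_t$). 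Second, in \eqref{eq:Boundary-mu} one must justify the interchange of the $\mu$-integration with the series in $z$ and keep the real parts and conjugates straight; once this is done, the identity is the observation that the weight $n+1$ is exactly $\big(\mathrm{Id}-\partial_nH\big)$ applied mode by mode to $f^*$.
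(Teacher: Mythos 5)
Your proof is correct, but it follows a genuinely different route from the paper's. The paper proves all three identities by reducing to a Dirac mass $\mu=\delta_w$, performing explicit kernel algebra (an integration by parts in $z$ against $\log|z|$ for \eqref{eq:Boundary-sing}, partial-fraction manipulations of $z\frac{z+w}{z-w}$ for \eqref{eq:Boundary-mu} and \eqref{eq:Boundary-last}), and then invoking linearity in $\mu$ together with density of finite combinations of Dirac masses in $\mc M(\mb U)$; the flow/Hadamard argument you use for \eqref{eq:Boundary-last} is exactly the alternative the paper mentions in one sentence after the lemma but does not carry out. Your treatment extends that dynamical mechanism to \eqref{eq:Boundary-sing} as well, via $\log|\cdot|=2\pi G(0,\cdot)$, $g_t(0)=0$ and conformal invariance, which gives a pleasantly uniform derivation of the first and third identities from \eqref{def:dmu0} and \eqref{eq:Hadamard-Variation}, at the cost of the (mild, and correctly flagged) justification of differentiating under the integral along the flow, using that the supports stay compactly inside $D_t$. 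For \eqref{eq:Boundary-mu} your Fourier/Laurent computation is more elementary than the paper's and makes the structure transparent — the weight $n+1$ is precisely $(\mathrm{Id}-\partial_nH)$ acting on the $n$-th mode of $f^*$ — whereas the paper's pointwise-in-$w$ identity avoids series interchanges but needs the Dirac-reduction step; your interchange of sum and $\mu$-integral is legitimate because $c_n=\int z^nf\,d\lambda$ decays geometrically for $f$ compactly supported in $\mb D$. One cosmetic remark: in your parenthetical alternative for \eqref{eq:Boundary-mu}, a single integration by parts (with the sign coming from $L_\mu=-\int z\frac{z+w}{z-w}\,\mu(dw)$) already yields $\int_{\mb U}\bigl(\int_{\mb D}\Re\bigl(z\frac{z+w}{z-w}\partial_zf\bigr)d\lambda\bigr)\mu(dw)$; this does not affect your main argument.
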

The proof of the lemma is done by proving these formulas in the case of a Dirac measure and using the linearity in $\mu$ and the fact that  finite combination of Dirac measures are dense in $\mc{M}(\mb{U})$ for the weak topology. The formula \eqref{eq:Boundary-last} can also be obtained  by carrying out  the computation of
$${\frac{d}{dt}}_{|t=0}\int_{D_t}f_1 G_{D_t}f_2 d\lambda =  - \int_{\mb D} (D_{\mu}f_1) G f_2 d\lambda - \int_{\mb D}  f_1  G  (D_{\mu}f_2) d\lambda$$
on $\mb D$, i.e., using \eqref{def:dmu0}. The lemma directly implies the following corollary.

\begin{Cor}
The condition \eqref{eq:invariance-cond} in terms of boundary integrals (but for the first term) corresponds to  
\begin{equation}
\label{eq:condition-boundary}
\begin{split}
\int \left(\sum_i\left(\frac 1{2\pi}\int_{\mb D} (D_{\mu} f_i)hd\lambda+(\chi-\alpha) \int_{\mb U} p_i d\mu
-\chi\int_{\mb U}\partial_nH p_i d\mu -\beta\dashint_{\mb U} p_i d\lambda
\right)\tilde\psi_i \right. \\
\left. +\pi  \sum_{i,j}\left(\int_{\mb U}p_i p_jd\mu\right)\tilde\psi_{i,j}\right)\rho(dh) = 0. 
\end{split}
\end{equation}
\end{Cor}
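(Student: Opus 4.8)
The plan is to obtain \eqref{eq:condition-boundary} directly from \eqref{eq:invariance-cond} by substituting, term by term, the three boundary-localization identities of Lemma \ref{lem:boundary-localization}, adding one elementary identity to handle the $\beta$-term, and finally dividing the whole equation by $2\pi$. Throughout I write $p_i=f_i^*=H^*f_i$.

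First I would treat the part of \eqref{eq:invariance-cond} that is linear in $\tilde\psi_i$. Applying \eqref{eq:Boundary-sing} with $f=f_i$ turns $\alpha\int_{\mb D}(D_\mu f_i)\log|\cdot|\,d\lambda$ into $-2\pi\alpha\int_{\mb U}p_i\,d\mu$, and \eqref{eq:Boundary-mu} turns $\chi\int_{\mb D}f_i\Re(L'_\mu)\,d\lambda$ into $2\pi\chi\int_{\mb U}(p_i-\partial_nHp_i)\,d\mu$. For the $\beta$-term I would use that the harmonic extension of the constant function $\mathbf 1$ on $\mb U$ is $\mathbf 1$, so $\int_{\mb U}(H^*f_i)\,d\lambda=\int_{\mb D}f_i(z)\bigl(\int_{\mb U}H(z,w)\,d\lambda(w)\bigr)\,d\lambda(z)=\int_{\mb D}f_i\,d\lambda$ since $\int_{\mb U}H(z,\cdot)\,d\lambda=(H\mathbf 1)(z)=1$; hence $-\beta\int_{\mb D}f_i\,d\lambda=-2\pi\beta\dashint_{\mb U}p_i\,d\lambda$. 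Collecting these, the coefficient of $\tilde\psi_i$ in \eqref{eq:invariance-cond} becomes $\int_{\mb D}(D_\mu f_i)h\,d\lambda+2\pi(\chi-\alpha)\int_{\mb U}p_i\,d\mu-2\pi\chi\int_{\mb U}\partial_nHp_i\,d\mu-2\pi\beta\dashint_{\mb U}p_i\,d\lambda$, i.e.\ $2\pi$ times the bracket multiplying $\tilde\psi_i$ in \eqref{eq:condition-boundary}.

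For the double sum, \eqref{eq:Boundary-last} produces the symmetrized combination $\int_{\mb D}f_iG(D_\mu f_j)+\int_{\mb D}(D_\mu f_i)Gf_j$ rather than $\int_{\mb D}f_iG(D_\mu f_j)$ alone, so I would first symmetrize the coefficient of $\tilde\psi_{ij}$: since $\psi$, hence $\tilde\psi$, is smooth one has $\tilde\psi_{ij}=\tilde\psi_{ji}$, and since $G$ is a symmetric kernel $\int_{\mb D}f_jG(D_\mu f_i)\,d\lambda=\int_{\mb D}(D_\mu f_i)Gf_j\,d\lambda$; therefore $\sum_{i,j}\bigl(\int_{\mb D}f_iG(D_\mu f_j)\,d\lambda\bigr)\tilde\psi_{ij}=\tfrac12\sum_{i,j}\bigl(\int_{\mb D}f_iG(D_\mu f_j)\,d\lambda+\int_{\mb D}(D_\mu f_i)Gf_j\,d\lambda\bigr)\tilde\psi_{ij}=-\pi\sum_{i,j}\bigl(\int_{\mb U}p_ip_j\,d\mu\bigr)\tilde\psi_{ij}$ by \eqref{eq:Boundary-last}. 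Multiplying by the prefactor $-2\pi$ in \eqref{eq:invariance-cond} gives $2\pi^2\sum_{i,j}\bigl(\int_{\mb U}p_ip_j\,d\mu\bigr)\tilde\psi_{ij}$, i.e.\ $2\pi$ times the last term of \eqref{eq:condition-boundary}. Substituting both pieces into \eqref{eq:invariance-cond} and dividing by $2\pi$ then yields \eqref{eq:condition-boundary} verbatim. There is no real obstacle: the analytic content sits entirely in Lemma \ref{lem:boundary-localization}, which is assumed, and the only points needing a moment of care are the symmetrization just described (it uses both symmetry of $\tilde\psi$ in its arguments and symmetry of $G$) and the identity $\int_{\mb D}f_i\,d\lambda=\int_{\mb U}H^*f_i\,d\lambda$ used for the $\beta$-term. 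The standing integrability hypothesis — at least one $f_i$ with $\int_{\mb D}f_i\,d\lambda\neq0$ — is inherited unchanged from Proposition \ref{Prop:invariance-condition} and is revisited in Section \ref{sec:integrability}.
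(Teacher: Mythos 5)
Your argument is correct and is exactly what the paper intends when it says the corollary follows directly from Lemma \ref{lem:boundary-localization}: substitute \eqref{eq:Boundary-sing}, \eqref{eq:Boundary-mu} and the symmetrized form of \eqref{eq:Boundary-last} into \eqref{eq:invariance-cond}, use $\int_{\mb D}f_i\,d\lambda=\int_{\mb U}H^*f_i\,d\lambda=2\pi\dashint_{\mb U}p_i\,d\lambda$ for the $\beta$-term, and divide by $2\pi$. The only details the paper leaves implicit — the symmetrization via $\tilde\psi_{ij}=\tilde\psi_{ji}$ and the symmetry of $G$, and the harmonic-extension-of-$\mathbf 1$ identity — are handled correctly in your write-up.
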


\begin{proof}[Proof of Lemma \ref{lem:boundary-localization}]
We start with  \eqref{eq:Boundary-sing}. We have, recalling \eqref{eq:dmu-expressions} and with $g_{w} = D_{\delta_w} f$,
\begin{align*}
\int_{\mb D}g_w\log|\cdot|d\lambda&=
-2\int_{\mb D}\Re\left(\partial_z\left(z\frac{z+w}{z-w}f(z)\right)\right)\log|z|d\lambda(z)\\
&=2\Re\int_{\mb D}\left(z\frac{z+w}{z-w}f(z)\right)\partial_z\log|z|d\lambda(z)\\
&=\Re\int_{\mb D}\left(\frac{z+w}{z-w}f(z)\right)d\lambda(z) =-2\pi(H^*f)(w)
\end{align*}
so that $\int_{\mb D} (D_{\mu} f) \log | \cdot | d\lambda=-2\pi \int_{\mb U}(H^*f)d\mu$.

Then we prove \eqref{eq:Boundary-mu}. We introduce, for $w \in \mb U$,
$$
\ell_f(w) := \int_{\mb D}f\Re(L'_{\delta_w})d\lambda,
$$
so that $\int_{\mb D}f\Re(L'_\mu)d\lambda=\int_{\mb U}\ell_fd\mu$.
We have, using $z\frac{z+w}{z-w}=z+\frac{2wz}{z-w}=z+2w+\frac{2w^2}{z-w}$,
\begin{align*}
\ell_f(w)&=-\int_{\mb D}f(z)\Re\left(\partial_z\left(z\frac{z+w}{z-w}\right)\right)d\lambda(z)=
-\int_{\mb D}f(z)\Re\left(1-\frac{2w^2}{(z-w)^2}
\right)d\lambda(z).
\end{align*}
With  $z^* = \bar{z}^{-1}$ observe that
$$f^*(w)=\frac 1{2\pi}\int_{\mb D}f(z)\Re\left(\frac{w+z}{w-z}\right)d\lambda(z)=
-\frac 1{2\pi}\int_{\mb D}f(z)\Re\left(\frac{w^*+z^*}{w^*-z^*}\right)d\lambda(z)
$$
so that
$$Hf^*(z')=-\frac 1{2\pi}\int_{\mb D}f(z)\Re\left(\frac{z'+z^*}{z'-z^*}\right)d\lambda(z)$$
and
\begin{equation}
\label{eq:intermediate-qle}
(\partial_nHf^*)(w)=-\frac 1{2\pi}\int_{\mb D}f(z)\Re\left(\frac{2wz^*}{(w-z^*)^2}\right)d\lambda(z)
=-\frac 1{2\pi}\int_{\mb D}f(z)\Re\left(\frac{2wz}{(w-z)^2}\right)\lambda(dz)
\end{equation}
Since
$$1-\frac{2w^2}{(z-w)^2}=1-\frac{2wz}{(z-w)^2}-\frac{2w}{w-z}=-\frac{w+z}{w-z}-\frac{2wz}{(z-w)^2},$$
we have $\ell_f(w)=2\pi (f^*(w)-\partial_nHf^*(w))$.

Finally, we prove \eqref{eq:Boundary-last}.  We consider 
$$T(f_1,f_2,\mu)=\int_{\mb D} (f_1G (D_{\mu} f_2)+f_2G(D_{\mu} f_1)) d\lambda$$
which is linear in $\mu$ and bilinear and symmetric in $(f_1,f_2)$. We have $T(f_1,f_2,\mu)=\int_{\mb U}T_w(f_1,f_2)\mu(dw)$ where
$$T_w(f_1,f_2)=T(f_1,f_2,\delta_w)=-2\left(\int_{\mb D}f_1G\Re\left(\partial_z\left(z\frac{z+w}{z-w}f_2(z)\right)\right)
+\int_{\mb D}f_2G\Re\left(\partial_z\left(z\frac{z+w}{z-w}f_1(z)\right)\right)
\right)$$
This is continuous in $w$ near ${\mb U}$, so we can replace $w$ with $(1+\eps)w$ and take $\eps\searrow 0$. Then
\begin{align*}
T_w(f_1,f_2)&=2\Re\int_{{\mb D}^2}f_1(z_1)\partial_{z_2}G(z_1,z_2)\left(z_2\frac{z_2+w}{z_2-w}f_2(z_2)\right)d\lambda(z_1)d\lambda(z_2)+(1\leftrightarrow 2)\\
&=\frac{\Re}{2\pi}\int_{{\mb D}^2}f_1(z_1)\left(\frac{1}{z_2-z_1}-\frac 1{z_2-z_1^*}\right)\left(z_2\frac{z_2+w}{z_2-w}f_2(z_2)\right)d\lambda(z_1)d\lambda(z_2)+(1\leftrightarrow 2)
\end{align*}
Since $z_2\frac{z_2+w}{z_2-w}=z_2+\frac{2wz_2}{z_2-w}=z_2+2w+\frac{2w^2}{z_2-w}$, we have
$$\frac{1}{z_2-z_1}z_2\frac{z_2+w}{z_2-w}+(1\leftrightarrow 2)=1-\frac{2w^2}{(z_1-w)(z_2-w)}$$
and
\begin{align*}
\frac{1}{z_2-z^*_1}z_2\frac{z_2+w}{z_2-w}+\overline{(1\leftrightarrow 2)}
&=\frac{z_2}{z_2-z^*_1}\frac{z_2+w}{z_2-w}-\frac{z_2}{z_2-z^*_1}\frac{z^*_1+w^*}{z^*_1-w^*}\\
&=-\frac{2wz_2}{(z_2-w)(z_1^*-w)}=\frac{2\bar{z_1}z_2}{(z_2-w)(\bar{z_1}-\bar w)}
\end{align*}
Moreover
\begin{align*}
\left(\frac{z_1+w}{z_1-w}\right)\left(\frac{z_2+w}{z_2-w}\right)&=
\left(1+\frac{2w}{z_1-w}\right)\left(1+\frac{2w}{z_2-w}\right)&=
-1+\frac{z_1+w}{z_1-w}+\frac{z_2+w}{z_2-w}+\frac{4w^2}{(z_1-w)(z_2-w)}\\
\overline{\left(\frac{z_1+w}{z_1-w}\right)}\left(\frac{z_2+w}{z_2-w}\right)&=
\left(-1+\frac{2\bar z_1}{\bar z_1-\bar w}\right)\left(-1+\frac{2z_2}{z_2-w}\right)&=
-1-\frac{\bar z_1+\bar w}{\bar z_1-\bar w}-\frac{z_2+w}{z_2-w}+\frac{4\bar z_1z_2}{(\bar z_1-\bar w)(z_2-w)}\\
\end{align*}
so that
\begin{align*}
\Re\left(\left(\frac{1}{z_2-z_1}-\frac 1{z_2-z_1^*}\right)\left(z_2\frac{z_2+w}{z_2-w}\right)+(1\leftrightarrow 2)\right)
&=-\frac 12\Re\left(\left(\frac{z_1+w}{z_1-w}\right)\left(\frac{z_2+w}{z_2-w}\right)+\overline{\left(\frac{z_1+w}{z_1-w}\right)}\left(\frac{z_2+w}{z_2-w}\right)\right)\\
&=-\Re\left(\frac{z_1+w}{z_1-w}\right)\Re\left(\frac{z_2+w}{z_2-w}\right)
\end{align*}
so that $T_w(f_1,f_2)=-2\pi (H^*f_1)(w)(H^*f_2)(w)$.
\end{proof}

\paragraph{Kernel $V$.}

The last term to reinterpret in \eqref{eq:invariance-cond}  is $ \int_{\mb D} (D_{\mu}f) h d\lambda $. In view of the first term in \eqref{eq:IBP-General},  we wish to write, recalling \eqref{def:Rf},
$$
 \int_{\mb D} (D_{\mu}f) h d\lambda = \int d \Lambda(u) \int_{\mb{U}^2} k_{u}(w) \ell_{u}(w') \partial_n H h(w) d\lambda(w) d\mu(w').
$$
To do so, the idea is the following. By using that $h$ is harmonic we will find $U_f(w,w')$ such that 
$$
\int_{\mb D} (D_{\mu}f) hd\lambda=\int_{\U^2}U_f(w,w')h(w')\mu(dw)d\lambda(w').
$$ Then, we look for a kernel $V$ such that $U_f(w,\cdot)=\partial_n HV_{f^*}(w,\cdot)$, leading to
$$
\int_{\mb D} (D_{\mu}f) h d\lambda =\int_{\U^2}V_{f^*}(w,w') \partial_n Hh(w')\mu(dw)d\lambda(w'),
$$
and we verify that for $f \in C_0^{\infty}(\mb{D})$,  we can write this kernel in the form
$V_{f^{*}}(w,w') = \int_{\mb{U}^2} k_u(w) \ell_u(w') d\Lambda(u).$

This is the content of the following lemma, in which we provide a rather explicit expression of $V$.

\begin{Lem}
\label{lem:kernel-V}
Consider $f \in C_c^{\infty}(\mb{D})$ and write $f^* =: p$. The harmonic extension of $p$ on $\mb{D}$ can be written as $\Re(P)$ with $P$ holomorphic and $\tilde p=\Im(P)$.  We define
\begin{equation}
\label{eq:Vp-expression}
V_p(w,w') :=\Re\left(\frac{w+w'}{w'-w}(P(w')-P(w)\right)=-\Im\left(\frac{w+w'}{w'-w}\right)(\tilde p(w')-\tilde p(w))=V_p(w',w).
\end{equation}
Then, we have
\begin{equation}
\label{eq:singular-V}
\int_{\mb D} (D_{\mu}f) h d\lambda =\int_{\U^2}V_{f^*}(w,w') \partial_n Hh(w')\mu(dw)d\lambda(w').
\end{equation}
Furthermore, there exist $\Lambda(du)$, $k_u$ and $\ell_u$ depending on $f$ such that 
\begin{equation}
\label{eq:kernel-product-form}
V_{f^{*}}(w,w') = \int_{\mb{U}^2} k_u(w) \ell_u(w') d\Lambda(u).
\end{equation}
\end{Lem}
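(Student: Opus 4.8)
The plan is to establish \eqref{eq:singular-V} first and then to read off \eqref{eq:kernel-product-form} from the closed form \eqref{eq:Vp-expression}; throughout, $\mu$ is an arbitrary element of $\mc M(\mb U)$ and $h$ is harmonic on $\mb D$, smooth up to $\mb U$ (the general case of a GFF trace following by density). By \eqref{eq:dmu-expressions} the left-hand side of \eqref{eq:singular-V} is linear in $\mu$, the right-hand side is visibly linear in $\mu$, and both are continuous for the weak topology on $\mc M(\mb U)$: on the left because, with $f$ supported in a compact $K\subset\subset\mb D$, the function $w\mapsto\int_{\mb D}\big({-}2\Re\,\partial_z(z\tfrac{z+w}{z-w}f(z))\big)h(z)\,d\lambda(z)$ is bounded and continuous on $\mb U$ (no singularity occurs on $K\times\mb U$), and on the right because $V_{f^*}$ is continuous — in fact smooth — on $\mb U\times\mb U$. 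Indeed $f^*=H^*f$ is real-analytic on $\mb U$ (the Poisson kernel $\tfrac{w+z}{w-z}$ is holomorphic in $w$ near $|w|=1$ when $z\in K$), so $P$ extends holomorphically across $\overline{\mb D}$, the difference $P(w')-P(w)$ vanishes to first order on the diagonal, and $\tfrac{P(w')-P(w)}{w'-w}=\int_0^1P'\big(w+t(w'-w)\big)\,dt$ is smooth in $(w,w')$. Since finitely supported measures are weakly dense in $\mc M(\mb U)$, it suffices to prove \eqref{eq:singular-V} for $\mu=\delta_w$.

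For $\mu=\delta_w$, put $A(z)=zf(z)\tfrac{z+w}{z-w}$, a smooth function with compact support in $\mb D$. Since $D_{\delta_w}f=-(\partial_zA+\partial_{\bar z}\bar A)$, integration by parts with vanishing boundary terms gives $\int_{\mb D}(D_{\delta_w}f)h\,d\lambda=\int_{\mb D}(A\,\partial_zh+\bar A\,\partial_{\bar z}h)\,d\lambda$, which for $h$ real and harmonic equals $2\Re\int_{\mb D}zf(z)\tfrac{z+w}{z-w}\,\partial_zh(z)\,d\lambda(z)$. Substituting $\partial_zh(z)=\int_{\mb U}\tfrac{w'}{2\pi(w'-z)^2}h(w')\,d\lambda(w')$ (valid since $z$ stays in $K$) and applying Fubini yields $\int_{\mb D}(D_{\delta_w}f)h\,d\lambda=\int_{\mb U}U_f(w,w')h(w')\,d\lambda(w')$, where $U_f(w,w')=\tfrac1\pi\Re\int_{\mb D}zf(z)\tfrac{z+w}{z-w}\tfrac{w'}{(w'-z)^2}\,d\lambda(z)$ is smooth in $w'$ and has zero mean over $\mb U$ (by a residue computation). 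As $\partial_nH$ is self-adjoint on $L^2(\mb U)$ with range the mean-zero functions, there is a smooth function $V_{f^*}(w,\cdot)$, unique up to an additive constant, with $\partial_nHV_{f^*}(w,\cdot)=U_f(w,\cdot)$; then $\int_{\mb U}U_f(w,w')h(w')\,d\lambda(w')=\int_{\mb U}V_{f^*}(w,w')\,\partial_nHh(w')\,d\lambda(w')$ for harmonic $h$, the constant being immaterial since $\partial_nHh$ has zero mean. This explains the kernel $V$; it remains to check that the explicit formula \eqref{eq:Vp-expression} is a valid choice of $V_{f^*}$, equivalently that \eqref{eq:singular-V} holds with that formula.

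I verify this by testing against harmonic monomials. Both sides of \eqref{eq:singular-V} are $\C$-linear in $h$, so it suffices to take $h(z)=z^n$, $n\ge1$ (the case $n\le-1$ follows by complex conjugating the real-coefficient identity, and $h\equiv1$ is trivial: $\partial_nH1=0$ and $\int_{\mb D}D_\mu f\,d\lambda=0$ by Stokes). For $h=z^n$ the identity $\int_{\mb D}(D_{\delta_w}f)h\,d\lambda=\int_{\mb D}(A\,\partial_zh+\bar A\,\partial_{\bar z}h)\,d\lambda$ gives the left-hand side $=n\int_{\mb D}f(z)\tfrac{z^n(z+w)}{z-w}\,d\lambda(z)$, and the expansion $\tfrac{z+w}{z-w}=-1-2\sum_{j\ge1}(z/w)^j$, uniformly convergent on the support of $f$, rewrites this as $-nd_n-2n\sum_{k>n}d_kw^{n-k}$ with $d_k:=\int_{\mb D}f(z)z^k\,d\lambda(z)$. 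For the right-hand side, $z^n$ is harmonic with $\partial_nH(z^n)|_{\mb U}(w')=-nw'^n$, so it equals $-n\int_{\mb U}V_{f^*}(w,w')w'^n\,d\lambda(w')$; writing $V_{f^*}(w,w')=\tfrac12\tfrac{w+w'}{w'-w}\big[(P(w')-\overline{P(w')})-(P(w)-\overline{P(w)})\big]$ on $\mb U$, inserting the power series $P(z')=\sum_{k\ge0}a_kz'^k$ (with $a_0\in\R$ and $\overline{a_k}=d_k/\pi$ for $k\ge1$), and using that $\int_{\mb U}\tfrac{w+w'}{w'-w}w'^m\,d\lambda(w')$ equals $2\pi w^m$, $0$, or $-2\pi w^m$ according as $m\ge1$, $m=0$, or $m\le-1$, a short telescoping computation gives $\int_{\mb U}V_{f^*}(w,w')w'^n\,d\lambda(w')=d_n+2\sum_{k>n}d_kw^{n-k}$, so the right-hand side is $-nd_n-2n\sum_{k>n}d_kw^{n-k}$ as well. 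This proves \eqref{eq:singular-V}.

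For \eqref{eq:kernel-product-form}, since $V_{f^*}$ is smooth on the torus $\mb U\times\mb U$ it has an absolutely convergent double Fourier series $V_{f^*}(w,w')=\sum_{\alpha,\beta\in\Z}c_{\alpha\beta}w^\alpha w'^\beta$, which (re-expressing each $w^\alpha w'^\beta$ via real trigonometric products if desired) is exactly a representation of the required form, with $u=(\alpha,\beta)$ and $\Lambda$ the associated weighted counting measure. The integrations by parts and Fubini exchanges above are routine, precisely because $f\in C^\infty_c(\mb D)$ is supported away from $\mb U$; the point I would flag as needing care is that $w'\mapsto V_{f^*}(w,w')$ is a genuine (smooth) function rather than a distribution despite the factor $\tfrac1{w'-w}$ — this rests on the first-order vanishing of $P(w')-P(w)$ on the diagonal, and is what makes \eqref{eq:singular-V} an honest double integral and, later, legitimises pairing $V_{f^*}(w,\cdot)$ against $\partial_nHh$ when $h$ is only a GFF trace. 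The other mildly delicate step is the telescoping cancellation in the monomial check, which is what forces the precise coefficients appearing in \eqref{eq:Vp-expression}.
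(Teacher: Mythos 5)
Your argument is correct, and it shares the paper's skeleton only up to the kernel $U_f$: like the paper, you reduce to $\mu=\delta_w$, integrate by parts against the compactly supported $A(z)=zf(z)\frac{z+w}{z-w}$, differentiate the Poisson representation of $h$ to get $U_f(w,w')=\frac1\pi\Re\int_{\mb D}zf(z)\frac{z+w}{z-w}\frac{w'}{(w'-z)^2}\,d\lambda(z)$, and invoke self-adjointness of $\partial_nH$; the treatment of \eqref{eq:kernel-product-form} via the double Fourier expansion of the smooth kernel is also the same as the paper's. Where you genuinely diverge is in establishing that the explicit kernel \eqref{eq:Vp-expression} is the correct primitive: the paper \emph{derives} it, by recognizing $U_f(w,\cdot)$ as a tangential derivative and performing a partial-fraction decomposition of $\frac{z(z+w)}{(z-w)(z-w')}$, which yields $U_f(w,\cdot)=-\partial_t\bigl(\Im\bigl(\frac{w+w'}{w'-w}\bigr)(p(w')-p(w))\bigr)=\partial_nH V_p(w,\cdot)$, whereas you \emph{verify} the identity by testing both sides on the harmonic monomials $z^n$, using the power series of $P$ (with $\overline{a_k}=d_k/\pi$, $d_k=\int_{\mb D} f z^k\,d\lambda$) and contour integrals; I checked your values and the resulting $\int_{\mb U}V_{f^*}(w,w')\,w'^n\,d\lambda(w')=d_n+2\sum_{k>n}d_kw^{n-k}$, which is also consistent with the contraction $\int_{\mb U}V_p(w,w')\,d\lambda(w')=2\pi\bigl(p(w)-\dashint p\bigr)$ appearing later in the paper. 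Two points deserve to be made explicit in your write-up: the individual integrals $\int_{\mb U}\frac{w+w'}{w'-w}w'^m\,d\lambda(w')$ must be read as principal values (a one-sided regularization, e.g.\ moving $w$ off $\mb U$, gives different values; splitting the smooth integrand into singular monomial pieces is legitimate only with this symmetric regularization, the exponentially decaying $a_k$ controlling the tail), and the passage from trigonometric polynomials to general harmonic $h$, i.e.\ GFF traces, uses the continuity of both pairings in a negative Sobolev norm of the boundary data, which your smoothness discussion of $V_{f^*}$ does supply. The trade-off between the two routes: the paper's computation explains where \eqref{eq:Vp-expression} comes from and requires no prior guess, while yours is shorter once the formula is in hand and makes transparent exactly which coefficients force it.
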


\begin{proof}
Recalling the expression of $D_{\mu}f$ in \eqref{eq:dmu-expressions}, since $h$ is harmonic we can write, by Fubini,
$$\int_{\mb D} (D_{\mu}f)hd\lambda=  \int_{\mb U} (D_{\mu}f)^* h d\lambda  =\int_{\U^2}U_f(w,w')h(w')\mu(dw)d\lambda(w')$$
with
\begin{align*}
U_f(w,w')&=\frac{-2}{2\pi}\int_{\mb D}\Re\left(\partial_z\left(z\frac{z+w}{z-w}f(z)\right)\right)\Re\left(\frac{w'+z}{w'-z}\right)d\lambda(z)\\
&=\frac{\Re}\pi\int_{\mb D}\left(z\frac{z+w}{z-w}f(z)\right)\partial_z\Re\left(\frac{w'+z}{w'-z}\right)d\lambda(z)\\
&=\frac{\Re}{2\pi}\int_{\mb D}\left(z\frac{z+w}{z-w}f(z)\right)\frac{2w'}{(w'-z)^2}d\lambda(z)\\
\end{align*}
Note that $\partial_\theta \Im\left(\frac{\alpha}{z-w'}\right)=\Im\left(\frac{\alpha iw'}{(z-w')^2}\right)=\Re\left(\frac{\alpha w'}{(z-w')^2}\right)$ (where $\partial_\theta$ is the tangential derivative w.r.t. $w'$), so that
\begin{align*}
U_f(w,w')&=\partial_\theta \frac{\Im}{\pi}\int_{\mb D}\left(z\frac{z+w}{z-w}f(z)\right)\frac{1}{z-w'}d\lambda(z)
\end{align*}
Then
\begin{align*}
\frac{z(z+w)}{(z-w)(z-w')}&=1+\frac{2w^2}{(z-w)(w-w')}+\frac{w'(w+w')}{(w'-w)(z-w')}\\
&=1+\frac{z+w}{z-w}\frac{w}{w-w'}-\frac{w}{w-w'}+\frac 12\left(\frac{z+w'}{z-w'}\frac{w+w'}{w'-w}-\frac{w+w'}{w'-w}\right)\\
&=\frac 12+\frac{z+w}{z-w}\frac{w}{w-w'}+\frac 12\frac{z+w'}{z-w'}\frac{w+w'}{w'-w}\\
&=\frac 12\left(1+\frac{z+w}{z-w}+\frac{z+w}{z-w}\frac{w+w'}{w-w'}-\frac{z+w'}{z-w'}\frac{w+w'}{w-w'}\right)
\end{align*}
so that
\begin{align*}
2\Im\left(\frac{z(z+w)}{(z-w)(z-w')}\right)&=\Im\left(\frac{z+w}{z-w}\right)+\Im\left(\frac{w+w'}{w-w'}\right)\Re\left(\frac{z+w}{z-w}-\frac{z+w'}{z-w'}\right)
\end{align*}
and
\begin{align*}
U_f(w,w')&=\partial_\theta \left(\Im\left(\frac{w+w'}{w-w'}\right)(f^*(w')-f^*(w))+\frac 1{2\pi}\int_{\mb D}\Im\left(\frac{z+w}{z-w}\right)f(z)d\lambda(z)\right)\\
&=-\partial_\theta \left(\Im\left(\frac{w+w'}{w'-w}\right)(f^*(w')-f^*(w))\right).
\end{align*} 
Write $f^* = p=\Re(P)$ on ${\mb U}$ with $P$ holomorphic, so that $\tilde p=\Im(P)$. Then for $w$ fixed, on ${\mb U}$
$$\Im\left(\frac{w+w'}{w'-w}\right)(p(w')-p(w))=\Im\left(\frac{w+w'}{w'-w}(P(w')-P(w)\right).$$
Hence
$$U_f(w,\cdot)=\partial_n HV_{f^*}(w,\cdot)$$
where
$$
V_p(w,w')=\Re\left(\frac{w+w'}{w'-w}(P(w')-P(w)\right)=-\Im\left(\frac{w+w'}{w'-w}\right)(\tilde p(w')-\tilde p(w))=V_p(w',w).
$$

Note that, if $f\in C^\infty_c({\mb D})$, $f^*(w)=\sum_{n\in\Z}a_nw^n$ where $(a_n)$ decays exponentially  as $n$ goes to $\pm\infty$ (this can be checked by developing the term $\frac{1}{w-z} = \frac{1}{w}\sum_{k \geq 0} (z/w)^k$ in the Poisson kernel and using that $f$ is compactly supported). It follows that
$$V_{f^*}(w,w')=\sum_{m,n\in\Z}b_{m,n}w^m(w')^n$$
where $(b_{m,n})$ decays exponentially at $|m|+|n|$ goes to $\infty$. In particular, the finite rank kernels
$$V^N_{f^*}(w,w')=\sum_{m,n\in\{-N,\dots,N\}^2}b_{m,n}w^m(w')^n$$
converge to $V$ uniformly in $C^k({\mb U}^2)$ for any $k\geq 0$. 
\end{proof}

\paragraph{Contractions.} We need some contractions of the kernel $V$ to reinterpret the terms appearing in \eqref{eq:IBP-General} in the case of the integration over $d\Lambda$.  By construction
$$\int_{{\mb U}^2}V_{f^*}(w,w')(\partial_nHh)(w')d\lambda(w')d\mu(w)=\int_{{\mb U}^2}U_f(w,w')h(w')d\lambda(w')d\mu(w)=\int_{\mb D} (D_{\mu}f) hd\lambda$$ and we need to compute the following terms as well, with $p, q \in C^{\infty}(\mb{U})$,
$$
\int_{\mb{U}^2} V_p(w,w') d\lambda(w) d\mu(w'), \quad \int_{\mb{U}} V_p(w,w)  d\mu(w) \quad \text{and} \quad \int_{\mb{U}^2} q(w) V_p(w,w') d\lambda(w) d\mu(w').
$$
\begin{Lem}
\label{lem:prop-kernel}
We have the following identities, for $p, q \in C^{\infty}(\mb{U})$,
\begin{align*}
\int_{\mb U}V_p(w,w')d\lambda(w)  & = 2\pi(p(w')-\dashint p) \\
V_p(w,w) & =-2\partial_n Hp(w) \\
\frac 1{2\pi}(pV_q+qV_p)(w') & = pq+\tilde p\tilde q-\dashint p\dashint q  \\
\frac{1}{2\pi} (p V_q - q V_p) & = \wt{(\wt{p} q - p \wt{q})}.
\end{align*}
In the right-hand side of the last equality, the conjugation is applied to $\tilde{p}q - p \tilde{q}$.
\end{Lem}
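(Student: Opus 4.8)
The plan is to reduce all four contractions to a single boundary identity together with the defining formula \eqref{eq:Vp-expression} for $V_p$ and the elementary facts about harmonic conjugation from Section \ref{sec:harmonic-conjugate}. Set $\beta(w,w'):=\Im\!\left(\tfrac{w+w'}{w'-w}\right)$. Since $\tfrac{w+w'}{w'-w}$ is purely imaginary for distinct $w,w'\in\mb{U}$, the formula \eqref{eq:Vp-expression} reads
\[
V_p(w,w')=\beta(w,w')\,\bigl(\tilde p(w)-\tilde p(w')\bigr),
\]
and the factor $\tilde p(w)-\tilde p(w')$ cancels the diagonal singularity of $\beta$, so $V_p$ is genuinely smooth (as already used in the proof of Lemma \ref{lem:kernel-V}). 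On the other hand, comparing with \eqref{eq:harm-conj-pv} and using $\Im\!\left(\tfrac{w+w'}{w'-w}\right)=-\Im\!\left(\tfrac{w+w'}{w-w'}\right)$, we get, for every $r\in C^\infty(\mb{U})$,
\[
\int_{\mb{U}}r(w)\,\beta(w,w')\,d\lambda(w)=-2\pi\,\tilde r(w'),
\]
the integral being a principal value near $w=w'$. The only further inputs are $\tilde 1=0$, $\widetilde{\tilde p}=-p+\dashint_{\mb{U}}p$, $\partial_nHp=-\partial_t\tilde p$, and $\widetilde{(\tilde p q+p\tilde q)}=-(pq-\tilde p\tilde q)+\dashint_{\mb{U}}p\,\dashint_{\mb{U}}q$, all from Section \ref{sec:harmonic-conjugate}.

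For the first identity, split $V_p(w,w')=\beta(w,w')\tilde p(w)-\tilde p(w')\,\beta(w,w')$ and integrate in $w$: the first term contributes $-2\pi\,\widetilde{\tilde p}(w')=2\pi\bigl(p(w')-\dashint_{\mb{U}}p\bigr)$ and the second contributes $-\tilde p(w')\cdot(-2\pi\tilde 1(w'))=0$. For the diagonal, with $w=e^{i\theta}$, $w'=e^{i\theta'}$ one has $\beta(w,w')=-\cot\tfrac{\theta'-\theta}{2}\sim-\tfrac{2}{\theta'-\theta}$ and $\tilde p(w)-\tilde p(w')\sim-\partial_t\tilde p(w)(\theta'-\theta)$ as $\theta'\to\theta$, so $V_p(w,w)=2\partial_t\tilde p(w)=-2\partial_nHp(w)$.

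For the bilinear contractions, split in the same way and integrate against $q(w)\,d\lambda(w)$; applying the boundary identity to $r=q\tilde p$ and to $r=q$ gives
\[
(qV_p)(w'):=\int_{\mb{U}}q(w)V_p(w,w')\,d\lambda(w)=-2\pi\,\widetilde{(q\tilde p)}(w')+2\pi\,\tilde p(w')\tilde q(w'),
\]
and symmetrically for $(pV_q)$. Hence $(pV_q+qV_p)(w')=4\pi\,\tilde p\tilde q-2\pi\,\widetilde{(p\tilde q+q\tilde p)}$, which after substituting $\widetilde{(\tilde p q+p\tilde q)}=-(pq-\tilde p\tilde q)+\dashint_{\mb{U}}p\,\dashint_{\mb{U}}q$ equals $2\pi\bigl(pq+\tilde p\tilde q-\dashint_{\mb{U}}p\,\dashint_{\mb{U}}q\bigr)$, the third claim; and $(pV_q-qV_p)(w')=-2\pi\,\widetilde{(p\tilde q-q\tilde p)}(w')=2\pi\,\widetilde{(\tilde p q-p\tilde q)}(w')$, the fourth.

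The computation is otherwise routine; the only points requiring care are the sign and orientation conventions (the counterclockwise tangential derivative $\partial_t$, the inward normal $\partial_n$, and the sign of $\beta$ relative to the conjugate Poisson kernel of \eqref{eq:harm-conj-pv}), and the bookkeeping of principal values: one has to check that although $\beta(\cdot,w')\notin L^1(\mb{U})$, the two pieces $q(w)\beta(w,w')\tilde p(w)$ and $q(w)\beta(w,w')\tilde p(w')$ into which $(qV_p)(w')$ is decomposed have cancelling singularities at $w=w'$, so that each is a bona fide Hilbert-transform-type principal value and their sum reproduces the honest (singularity-free) integral $\int_{\mb{U}}q(w)V_p(w,w')d\lambda(w)$. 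For $p,q\in C^\infty(\mb{U})$ this is straightforward, and it is the only place where any subtlety arises; this is the step I would expect to be the mild but real obstacle.
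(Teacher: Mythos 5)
Your proof is correct. It takes a mildly but genuinely different route from the paper's: you reduce all four contractions to the single principal-value identity $p.v.\int_{\mb U} r(w)\,\Im\!\left(\frac{w+w'}{w'-w}\right)d\lambda(w)=-2\pi\,\tilde r(w')$ (a rewriting of \eqref{eq:harm-conj-pv}) together with the conjugation algebra of Section \ref{sec:harmonic-conjugate} ($\tilde 1=0$, $\widetilde{\tilde p}=-p+\dashint p$, the product identity for $\widetilde{\tilde pq+p\tilde q}$), after splitting $V_p(w,w')=\Im\!\left(\frac{w+w'}{w'-w}\right)\tilde p(w)-\tilde p(w')\,\Im\!\left(\frac{w+w'}{w'-w}\right)$ and tracking the cancelling singularities. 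The paper instead evaluates the first and third contractions by writing the integrand as the real part of a function of $w$ holomorphic in $\mb D$ (using that $\frac{w+w'}{w'-w}$ is purely imaginary on $\mb U$) and invoking the mean value property at $w=0$, which keeps those intermediate integrals free of singularities (in the third identity it first subtracts $p(w'),q(w')$ precisely for this purpose); only the antisymmetric contraction is handled there via \eqref{eq:harm-conj-pv}, and the diagonal computation coincides with yours. The two arguments are close cousins — the product identity you invoke is itself derived in Section \ref{sec:harmonic-conjugate} by the same holomorphic trick — but yours buys uniformity (one boundary identity yields all four claims, and the fourth drops out of the same splitting as the third), at the cost of the principal-value bookkeeping you rightly flag, whereas the paper's keeps every integrand for the symmetric contractions absolutely integrable.
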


\begin{proof}
Write $p=\Re(P)$ on ${\mb U}$, $P$ holomorphic. Then
$$\int_{\mb U}V_p(w,w')d\lambda(w)=\int_{\mb U}\Re\left(\frac{w+w'}{w'-w}(P(w')-P(w)\right)d\lambda(w)=2\pi\Re(P(w')-P(0))=2\pi(p(w')-\dashint p)$$
On the diagonal, we have $V_p(w,w)=2\partial_\theta \tilde p(w)=-2\partial_n Hp(w)$. Furthermore,
\begin{align*}
\frac 1{2\pi}(pV_q+qV_p)(w') =&  \frac 1{2\pi}\int_{\mb U}(p(w)V_q(w,w')+q(w)V_p(w,w'))d\lambda(w)\\
= &p(w')(q(w')-\dashint q)+q(w')(p(w')-\dashint p)\\
&+\frac 1{2\pi}\int_{\mb U}((p(w)-p(w')V_q(w,w')+(q(w)-q(w')V_p(w,w'))d\lambda(w)\\
=& p(w')(q(w')-\dashint q)+q(w')(p(w')-\dashint p)\\
&+\frac 1{2\pi}\int_{\mb U}\Im\left(\frac{w+w'}{w'-w}\right)\Im((P(w')-P(w))(Q(w')-Q(w))d\lambda(w)\\
=& p(w')(q(w')-\dashint q)+q(w')(p(w')-\dashint p)-\Re((P(w')-P(0))(Q(w')-Q(0)))
\end{align*}
so that
\begin{align*}
\frac 1{2\pi}(pV_q+qV_p)
=p(q-\dashint q)+q(p-\dashint p)-(p-\dashint p)(q-\dashint q)+\tilde p\tilde q
=pq+\tilde p\tilde q-\dashint p\dashint q
\end{align*}
Lastly,  we have
\begin{equation}
\label{def:operator-A}
A(p,q) := \frac{1}{2\pi} (p V_q - q V_p) = \wt{\wt{p} q - p \wt{q}}
\end{equation}
Indeed, recalling $V_{p}(w,w')  = -\Im\left(\frac{w+w'}{w'-w}\right)(\widetilde{p}(w')-\widetilde{p}(w))$ from  \eqref{eq:Vp-expression}, we have
\begin{align*}
\frac{1}{2\pi} (p V_q - q V_p)(w) & = - \frac{1}{2\pi} \int_{\mb{U}} \Im\left(\frac{w+w'}{w'-w}\right) \left( (\widetilde{q}(w')-\widetilde{q}(w))   p(w') - (\widetilde{p}(w')-\widetilde{p}(w)) q(w') \right) \lambda(dw') \\
& = \frac{1}{2\pi} \int_{\mb{U}} \Im\left(\frac{w'+w}{w'-w}\right) \left( \wt{p}(w') q(w') - p(w') \wt{q}(w') + \wt{q}(w) p(w') - \wt{p}(w) q(w') \right) \lambda(dw') 
\end{align*}
and the result follows  from \eqref{eq:harm-conj-pv}, namely $\tilde p(w)=\frac 1{2\pi}p.v.\int_{\mb U}p(w')\Im\left(\frac{w'+w}{w'-w}\right)d\lambda(w')$.
\end{proof}

We note here that $A(p,q)$ is the term appearing in the expression of the Dirichlet form \eqref{eq:antisymmetric-part}. Clearly, $A(p,q) = - A(q,p)$ and $A(p,q) = A(p - \dashint p, q-\dashint q) + (\dashint p) q - (\dashint q) p$. Also, if $p$ and $q$ are strictly ordered or have the same degree (when considering their decomposition on the Fourier basis), a computation using trigonometry formulas gives
\begin{equation}
A(p,q)  = - \sgn(p,q) (pq + \tilde{p} \tilde{q}) 
\end{equation}
where  $\sgn(p,q) = 1_{\deg(p) > \deg(q)} -1_{\deg(p) < \deg(q)}$.

Now, we give an alternative expression of $\mc{L}_{\alpha, \chi, \beta}$, initially defined in \eqref{def:L} and \eqref{eq:drift-diffu}, with $\mu$ being here the Borel measure coming from the Loewner equation. As a consequence of Lemma \ref{lem:boundary-localization} and Lemma \ref{lem:kernel-V}, we have
\begin{Prop}
\label{Prop:gen-loc}
For a bulk cylindrical test function $F(h)=\psi(\int_{\mb D}f_1 h d\lambda,\dots, \int_{\mb D}f_n h d\lambda)$, 
\begin{equation}
\label{eq:L-boundary-loc}
{\mc L}_{\alpha, \chi, \beta} F = \sum_i b(p_i)\psi_i+\frac 12\sum_{ij}\sigma(p_i,p_j)\psi_{i,j} 
\end{equation}
with $p_i = f_i^*$, $\sigma(p,q)=  4\pi^2  \int pqd\mu$ and
$$
b(f^*) =  \int_{\U^2}V_{f^*}(w,w') \partial_n Hh(w')\mu(dw)d\lambda(w')-2\pi \chi \int_{\mb{U}} \partial_n H p_i d\mu+2\pi(\chi-\alpha)\int_{\mb U}f^*d\mu-\beta \int_{\mb U} f^*d\lambda.
$$
\end{Prop}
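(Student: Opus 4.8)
The plan is to read off \eqref{eq:L-boundary-loc} directly from the already-derived expression \eqref{def:L} of $\mc{L}_{\alpha,\chi,\beta}$, by substituting the boundary-localized identities of Lemmas \ref{lem:boundary-localization} and \ref{lem:kernel-V} into the first-order coefficient and regrouping. First note that the second-order part of \eqref{def:L}, namely $\tfrac{1}{2}\sum_{ij}\sigma(p_i,p_j)\psi_{ij}$ with $\sigma(p,q)=4\pi^2\int_{\mb{U}}pq\,d\mu$, is already a pure boundary expression, so it is carried over unchanged; the only thing to rewrite is the drift $b_0(f^*)$.

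Recall from \eqref{def:L} that $b_0(f^*)=\int_{\mb{D}}h(D_\mu f)\,d\lambda-2\pi\alpha\int_{\mb{U}}f^*\,d\mu+\chi\int_{\mb{D}}f\,\Re(L'_\mu)\,d\lambda-\beta\int_{\mb{U}}f^*\,d\lambda$. The first term is handled by \eqref{eq:singular-V} of Lemma \ref{lem:kernel-V}, which applies because $\rho$ is supported on traces of harmonic functions, so $h$ may be taken harmonic, and gives
$$
\int_{\mb{D}}h(D_\mu f)\,d\lambda=\int_{\mb{U}^2}V_{f^*}(w,w')\,\partial_n Hh(w')\,\mu(dw)\,d\lambda(w').
$$
The third term is handled by \eqref{eq:Boundary-mu} of Lemma \ref{lem:boundary-localization}, which gives $\chi\int_{\mb{D}}f\,\Re(L'_\mu)\,d\lambda=2\pi\chi\int_{\mb{U}}(f^*-\partial_n Hf^*)\,d\mu$. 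Merging the two $\int_{\mb{U}}f^*\,d\mu$ contributions, $-2\pi\alpha\int_{\mb{U}}f^*\,d\mu+2\pi\chi\int_{\mb{U}}f^*\,d\mu=2\pi(\chi-\alpha)\int_{\mb{U}}f^*\,d\mu$, and leaving the $\beta$-term alone, gives exactly the claimed formula for $b(f^*)$ (with $p_i=f_i^*$, the term $-2\pi\chi\int_{\mb{U}}\partial_n Hp_i\,d\mu$ being what remains of $-\partial_n Hf^*$). Plugging back into \eqref{def:L} yields \eqref{eq:L-boundary-loc}. As a side remark, one may also use \eqref{eq:Boundary-sing} to write the $\alpha$-contribution as $\alpha\int_{\mb{D}}(D_\mu f)\log|\cdot|\,d\lambda$, which reconciles this form of $\mc{L}_{\alpha,\chi,\beta}$ with the one used to obtain \eqref{eq:invariance-cond}.

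All the analytic work has been done in Lemmas \ref{lem:boundary-localization} and \ref{lem:kernel-V}, so the proof is essentially bookkeeping and I do not anticipate a genuine obstacle. The two points warranting care are the identity $\int_{\mb{D}}h(D_\mu f)\,d\lambda=\int_{\mb{U}}(D_\mu f)^*h\,d\lambda$, which is valid since $h$ is harmonic and $(D_\mu f)^*$ is smooth on $\mb{U}$, so Fubini applies exactly as in the proof of Lemma \ref{lem:kernel-V}, and the accounting of signs and factors of $2\pi$ when the three boundary integrals against $f^*$ and $\partial_n Hf^*$ are combined.
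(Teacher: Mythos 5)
Your proof is correct and matches the paper's route: the paper states Proposition \ref{Prop:gen-loc} precisely as a consequence of Lemma \ref{lem:boundary-localization} (via \eqref{eq:Boundary-mu}) and Lemma \ref{lem:kernel-V} (via \eqref{eq:singular-V}), substituted into the drift $b_0(f^*)$ of \eqref{def:L} with the $f^*$-terms regrouped as $2\pi(\chi-\alpha)\int_{\mb U}f^*d\mu-2\pi\chi\int_{\mb U}\partial_nHf^*d\mu$, exactly as you do. Your bookkeeping of signs and $2\pi$ factors, and the harmonicity justification for applying \eqref{eq:singular-V}, are all in order.
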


\subsection{Vanishing terms and invariance equation}
\label{sec:invariance-condition}

In this section, to prove Theorem \ref{thm-invariance}, we start from the integration by parts formula \eqref{eq:IBP-General} and we look for \eqref{eq:invariance-cond}. This is done by using the framework developed up to now, in particular the results from Section \ref{sec:boundary-localization-kernelV} and well-chosen test functionals  when integrating by parts. Additionally, we will need a proof that some terms appearing along the way cancel with each others (Lemma \ref{Lem:rotational-invariance-v1}). The idea to prove this lemma is to consider the rotational invariance of the field.

 We take $f_1, \dots, f_n \in C^\infty_c({\mb D}\setminus\{0\})$  and $\psi\in C^\infty_c(\R)$  fixed. We suppose there is at least one $f_i$ for which $\int_{\mb D} f_i d\lambda \neq 0$.   Applying \eqref{eq:IBP-General} to pairs $(\varphi,f)=(\tilde{\psi}_i,f_i)$ and summing over $i$ we get
\begin{align*}
\int\left(\sum_i\left(-\int_{{\mb U}^2} (D\VV)(h)(w)R_{f_i}(w,w')d\lambda(w)d\mu(w') -\xi\int_{\mb U}R_{f_i}(w,w)d\mu(w) \right)\tilde\psi_i\right.\\
+\left.\sum_{i,j}\left(\int_{{\mb U}^2}(H^*f_j)(w)R_{f_i}(w,w')d\lambda(w)d\mu(w')\right)\tilde\psi_{i,j}\right)\rho(dh)=0.
\end{align*}
Every term appearing in this section is well defined, i.e., the terms integrated against $\rho$ are integrable. As verifying this distracts from the main arguments, this is postponed and proved in Section \ref{sec:integrability}.

Now, we use the expression \eqref{eq:V-exp-DV} of $\mc{V}$ which gives $D\VV(h) = - \frac{1}{2 \pi } (\partial_n Hh) + c$ and we use the notation $p_i=H^*f_i$. Recalling \eqref{eq:singular-V} and \eqref{eq:kernel-product-form}, we specialize this equation to the kernel $V$ so
\begin{align*}
\int\left( \sum_i \left( \frac{1}{2\pi}\int_{{\mb U}^2}V_{p_i}(w,w')(\partial_nHh)(w)d\lambda(w)d\mu(w') -c \int_{{\mb U}^2}V_{p_i}(w,w')d\lambda(w)d\mu(w')  -\xi\int_{\mb U}V_{p_i}(w,w)d\mu(w)\right)\tilde\psi_i\right.\\
\left.+\sum_{i,j}\left(\int_{{\mb U}^2} p_j(w) V_{p_i}(w,w')d\lambda(w)d\mu(w')\right)\tilde\psi_{i,j}\right)\rho(dh)=0
\end{align*}
and, using contractions from Lemma \ref{lem:prop-kernel}, 
\begin{align}
\int\left( \sum_i\left(\frac{1}{2\pi}\int_{{\mb D}} (D_{\mu}f_i) hd\lambda-2\pi c \int_{{\mb U}}(p_i-\dashint p_i)d\mu+2\xi\int_{\mb U}(\partial_nHp_i)(w)d\mu(w)\right)\tilde\psi_i\right. \nonumber \\
\left.+\pi\sum_{i,j} \tilde\psi_{i,j} \int_{\mb U}(p_ip_j+\tilde p_i\tilde p_j-\dashint p_i\dashint p_j)d\mu\right)\rho(dh)=0. \label{eq:conseq-ibp}
\end{align}
By comparing the last term in \eqref{eq:conseq-ibp} and \eqref{eq:condition-boundary}, we aim to rewrite 
$$ \sum_{i,j} \tilde\psi_{i,j}  \int_{\mb U}(\tilde p_i\tilde p_j-\dashint p_i\dashint p_j)d\mu \rho(dh).$$
Using the integration by parts formula \eqref{eq:IBP-potential} with $(\varphi,\ell,k)=(\tilde\psi_i,\dashint p_i,1)$ gives
\begin{equation}
\int\left(\sum_i\left(-\xi\dashint p_i\int_{\mb U}d\mu  -2\pi c \dashint p_i\int_{\mb U}d\mu\right)\tilde\psi_i+2\pi\sum_{i,j}  \tilde\psi_{i,j} \dashint p_i \dashint p_j\int_{\mb U}d\mu \right)\rho(dh)=0.
\end{equation}

The other term (the one including $\int \tilde{p}_i \tilde{p}_j d\mu$) is more delicate to reinterpret. We need the following lemma which relies on the rotational invariance of $h$ and on an integration by parts. We also provide an alternative proof of this fact in the appendix (Section \ref{sec:GI}).
\begin{Lem} 
\label{Lem:rotational-invariance-v1}
For $f_1, \dots, f_n \in C^\infty_c({\mb D}\setminus\{0\})$ with at least one $f_i$ for which $\int_{\mb D} f_i d\lambda \neq 0$ and $\psi\in C^\infty_c(\R)$, with $p_i = f_i^*$, we have
\begin{align*}
\int\left(\frac{1}{2\pi}\sum_i\left(\int_{\mb U}(\partial_nHp_i)d\mu\right)\tilde\psi_i-\xi\sum_{i,j}\left(\int_{\mb U}\tilde{p_i} \tilde{p_j}d\mu\right)
\tilde\psi_{i,j}
\right)\rho(dh)=0.
\end{align*}
\end{Lem}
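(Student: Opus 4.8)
The plan is to derive the identity from the rotational invariance of $\rho=\rho_c$ (both the Dirichlet energy term $\tfrac1{4\pi}\int_{\mb D}|\nabla Hh|^2$ and the zero-mode term $c\int_{\mb U}h\,d\lambda$ are invariant under rotations of $\mb U$) together with the Gaussian integration by parts for $\rho_c$. Throughout write $p_i=f_i^\ast=H^\ast f_i$, which is smooth on $\mb U$ by (the proof of) Lemma~\ref{lem:kernel-V}, so $\tilde p_i$ is smooth as well, and abbreviate $\langle p_i,h\rangle:=\langle p_i,h\rangle_{L^2(\mb U)}=\int_{\mb D}f_i h\,d\lambda$ (identifying $h$ with its harmonic extension). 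The functional to test is
$$F(h)=\sum_i\Big(\int_{\mb U}\tilde p_i\,d\mu_h\Big)\,\tilde\psi_i\big(\langle p_1,h\rangle,\dots,\langle p_n,h\rangle\big),\qquad \mu_h=e^{-\xi h}.$$

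First I would differentiate in a rotation. With $(r_\theta h)(w)=h(e^{i\theta}w)$, the boundary GMC transforms as an honest pushforward of measures, so $\int_{\mb U}\tilde p_i\,d\mu_{r_\theta h}=\int_{\mb U}\tilde p_i(e^{-i\theta}\cdot)\,d\mu_h$, which (since $\tilde p_i$ is smooth and $\partial_t\tilde p_i=-\partial_nHp_i$) has $\theta$-derivative $\int_{\mb U}\partial_nHp_i\,d\mu_h$ at $\theta=0$; similarly $\langle p_j,r_\theta h\rangle=\langle p_j(e^{-i\theta}\cdot),h\rangle$ has $\theta$-derivative $-\langle\partial_tp_j,h\rangle_{L^2(\mb U)}$. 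Since $\theta\mapsto\int F(r_\theta h)\,\rho(dh)$ is constant, differentiating at $\theta=0$ (interchange of derivative and integral justified below) yields
$$\int\Big(\sum_i\big(\textstyle\int_{\mb U}\partial_nHp_i\,d\mu\big)\tilde\psi_i-\sum_{i,j}\big(\textstyle\int_{\mb U}\tilde p_i\,d\mu\big)\,\langle\partial_tp_j,h\rangle_{L^2(\mb U)}\,\tilde\psi_{ij}\Big)\rho(dh)=0.\qquad(\star)$$

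Next I would rewrite the second sum of $(\star)$ by integration by parts. Using $\partial_tp_j=\partial_nH\tilde p_j$ and self-adjointness of $\partial_nH$ on $L^2(\mb U)$ (equivalently, a one-line Fourier computation), $\langle\partial_tp_j,h\rangle_{L^2(\mb U)}=\int_{\mb U}\tilde p_j\,(\partial_nHh)\,d\lambda$. Since $\dashint_{\mb U}\tilde p_j\,d\lambda=0$ (the harmonic extension of $\tilde p_j$ vanishes at the origin), the Gaussian integration by parts for $\rho_c$ in the Cameron--Martin direction $k=\tilde p_j$ has no zero-mode contribution: $\int\Phi\,\langle\partial_tp_j,h\rangle_{L^2(\mb U)}\,\rho(dh)=-2\pi\int(D_{\tilde p_j}\Phi)\,\rho(dh)$ for sufficiently integrable $\Phi$. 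Applying this with $\Phi=F_j:=\sum_i\big(\int_{\mb U}\tilde p_i\,d\mu_h\big)\tilde\psi_{ij}(\langle p_1,h\rangle,\dots)$ and summing over $j$, and expanding $D_{\tilde p_j}F_j$ by means of $D_{\tilde p_j}\int_{\mb U}\tilde p_i\,d\mu_h=-\xi\int_{\mb U}\tilde p_i\tilde p_j\,d\mu_h$ and $D_{\tilde p_j}\langle p_l,h\rangle=\langle p_l,\tilde p_j\rangle_{L^2(\mb U)}$, one finds $\sum_j D_{\tilde p_j}F_j=-\xi\sum_{i,j}\big(\int_{\mb U}\tilde p_i\tilde p_j\,d\mu\big)\tilde\psi_{ij}$: the only other term, $\sum_{i,j,l}\big(\int_{\mb U}\tilde p_i\,d\mu\big)\langle p_l,\tilde p_j\rangle_{L^2(\mb U)}\tilde\psi_{ijl}$, vanishes because $\langle p_l,\tilde p_j\rangle_{L^2(\mb U)}=-\langle p_j,\tilde p_l\rangle_{L^2(\mb U)}$ is antisymmetric in $(j,l)$ (antisymmetry of harmonic conjugation) while $\tilde\psi_{ijl}$ is symmetric. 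Hence $\int\sum_{i,j}\big(\int_{\mb U}\tilde p_i\,d\mu\big)\langle\partial_tp_j,h\rangle_{L^2(\mb U)}\tilde\psi_{ij}\,\rho(dh)=2\pi\xi\int\sum_{i,j}\big(\int_{\mb U}\tilde p_i\tilde p_j\,d\mu\big)\tilde\psi_{ij}\,\rho(dh)$; inserting this into $(\star)$ and dividing by $2\pi$ gives exactly the stated identity.

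The algebra is routine; the real work is analytic. The main obstacle is justifying the manipulations for the $\sigma$-finite measure $\rho$ together with GMC functionals: $\rho$-integrability of $F$, $F_j$ and their relevant derivatives (this is precisely where the standing assumption that some $f_i$ has nonzero mean enters, exactly as in Section~\ref{sec:integrability}); the interchange of $\frac{d}{d\theta}$ with $\int\cdot\,\rho(dh)$ (for which one can use that $\int_{\mb U}\tilde p_i\,d\mu_{r_\theta h}$ and its $\theta$-derivative are bounded, uniformly in $\theta$, by a constant times the total mass $|\mu_h|$, which is $\rho$-integrable by the rotational invariance of $\rho$); and the product rule for $D_{\tilde p_j}$ applied to a GMC factor times a cylinder function, underlying the integration by parts used above. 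These points are of the same nature as the other analytic inputs of this section.
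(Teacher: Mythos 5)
Your argument is correct and is essentially the paper's own proof: rotational invariance of $\rho$ applied to the functional $\sum_i(\int_{\mb U}\tilde p_i\,d\mu)\tilde\psi_i$, followed by the Gaussian integration by parts \eqref{eq:IBP-potential} in the conjugate directions $k=\tilde p_j$ (with no zero-mode term since $\dashint\tilde p_j=0$), and the antisymmetry $\langle p_l,\tilde p_j\rangle=-\langle p_j,\tilde p_l\rangle$ killing the extra derivative term — the only difference being that the paper first records the intermediate identity \eqref{eq:lemma-record} for general $(\varphi,\ell)$ and then specializes to $(\tilde\psi_i,\tilde p_i)$, whereas you specialize before integrating by parts, which is cosmetic. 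One small caveat: the integrability/domination needed for these manipulations comes from the standing assumption $\int_{\mb D}f_i\,d\lambda\neq 0$ and the decay of $\tilde\psi$ as in Section \ref{sec:integrability}, not from ``$|\mu_h|$ being $\rho$-integrable by rotational invariance'' as your aside suggests.
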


\begin{proof}
Recall $F(h)=\left(\int_{\mb U}\ell d\mu\right)\varphi(\int_{\mb D}f_1h d\lambda,\cdots, \int_{\mb D}f_nh d\lambda)$. Let $RF(h)={\frac{d}{dt}}_{|t=0}F(h(e^{it}\cdot))$; then $\int RFd\rho=0$. Hence, 
$$
\int\left(\left(\int_{\mb U}\partial_\theta \ell d\mu\right)\varphi+\sum_{i}\left(\int_{\mb U}\ell d\mu\right)\left(\int_{\mb U}h\partial_\theta f_i^*d\lambda\right)\varphi_i\right)\rho(dh)=0
$$ 
Since 
$$\int_{\mb U}h\partial_\theta f_i^*d\lambda=\int_{\mb U}h\partial_nH\tilde{f^*_i}d\lambda=\int_{\mb U}\tilde{f^*_i}\partial_nHhd\lambda$$
(see the Section \ref{sec:harmonic-conjugate}) integration by parts with $k = \tilde{f^*_i}$, $\varphi = \varphi_i$ yields
\begin{align*}
\int\left(-\left(\int_{\mb U}\frac{\partial_\theta \ell}{2\pi} d\mu\right)\varphi+\sum_{i}\left(-\xi\int_{\mb U}\ell \tilde{f_i^*}d\mu-2\pi c \dashint\tilde{f_i^*}\int\ell d\mu\right)
\varphi_i
+\sum_{i,j}\left(\int_{\mb U}\ell d\mu\right)\left(\int_{\mb U} \tilde{f_i^*}f_j^*d\lambda\right)\varphi_{i,j}
\right)\rho(dh)=0
\end{align*}
By antisymmetry $\int_{\mb U}(\tilde{f_i^*}f_j^*+\tilde{f_j^*}f_i^*)d\lambda=0$ and since $\int_{\mb{U}} \tilde{f^*_i} d\lambda =0$ we are left with
\begin{equation}
\label{eq:lemma-record}
\int\left(\left(\int_{\mb U}\frac{\partial_\theta \ell}{2\pi} d\mu\right)\varphi+\sum_{i}\left(\xi\int_{\mb U}\ell \tilde{f_i^*}d\mu\right)
\varphi_i
\right)\rho(dh)=0
\end{equation}
Applying this to pairs $(\varphi,\ell)=(\tilde\psi_i,\tilde f^*_i)$, we get
$$
\int\left(\sum_i\left(\int_{\mb U}\frac{\partial_\theta \tilde f^*_i}{2\pi} d\mu\right)\tilde\psi_i+\sum_{i,j}\left(\xi\int_{\mb U}\tilde{f^*_i} \tilde{f_j^*}d\mu\right)
\tilde\psi_{i,j}
\right)\rho(dh)=0
$$
or 
$$
\int\left(\frac{1}{2\pi}\sum_i\left(\int_{\mb U}(\partial_nHf_i^*)d\mu\right)\tilde\psi_i-\xi\sum_{i,j}\left(\int_{\mb U}\tilde{f^*_i} \tilde{f_j^*}d\mu\right)
\tilde\psi_{i,j}
\right)\rho(dh)=0
$$
which completes the proof.
\end{proof}

Now, we conclude on the invariance condition satisfied by $\rho(dh)$. Altogether, as a combination of \eqref{eq:conseq-ibp} and Lemma \ref{Lem:rotational-invariance-v1}, we have
\begin{align}
\int \left(\sum_i\left(\frac{1}{2\pi}\int_{{\mb D}} (D_{\mu}f_i) hd\lambda -2\pi c  \int_{\mb U}p_id\mu+(2\xi+\frac 1{2\xi})\int_{\mb U}\partial_nHp_i d\mu+ \frac{2\pi c-\xi}2\dashint p_i\int_{\mb U}d\mu \right)\tilde\psi_i\right. \nonumber\\
\left.  + \pi\sum_{i,j}\left(\int_{\mb U}p_ip_jd\mu\right)\tilde\psi_{i,j}\right)\rho(dh)=0. \label{eq:final-IBP}
\end{align}
By integration by parts on the zero modes, we also note
$$ (2\pi c-\xi)  \sum_i \dashint_{\mb{U}}  p_i d\lambda |\mu| \tilde{\psi}_i d\rho = (2\pi c-\xi) \int \partial_m \tilde{\psi} | \mu| d\rho   =   (2\pi c - \xi)(2\pi c + \xi) \int |\mu| \tilde{\psi} \rho(dh).$$
We compare the combination of \eqref{eq:final-IBP}  and of the above equation with the boundary localized invariance condition \eqref{eq:condition-boundary}, namely
\begin{align*}
\int \left(\sum_i\left(\frac 1{2\pi}\int_{\mb D} (D_{\mu}f_i) hd\lambda+(\chi-\alpha) \int_{\mb U} p_i d\mu
-\chi\int_{\mb U}\partial_nH p_i d\mu -\beta\dashint_{\mb U} p_i d\lambda \right)\tilde\psi_i +\pi  \sum_{i,j}\left(\int_{\mb U}p_i p_jd\mu\right)\tilde\psi_{i,j} \right)\rho(dh) = 0 
\end{align*}
and we conclude that the two equations coincide when
$$
(- 2\pi c) = \chi-\alpha, \quad 
2\xi +(2\xi)^{-1} = -\chi, \quad
\xi^2 = (2\pi c)^2, \quad \beta = 0.
$$
Altogether, this gives
\begin{thm}
\label{thm-invariance}
For $F(\phi,h)=\psi(\int_{\mb D} f_1(\phi+\tilde h)d\lambda,\dots,\int_{\mb D} f_n(\phi+\tilde h)d\lambda )$ where $f_i\in C^\infty_c({\mb D}\setminus\{0\}),  \psi\in C^\infty_c(\R)$, with at least one $f_i$ for which $\int_{\mb D} f_i d\lambda \neq 0$,  the condition \eqref{eq:invariance-cond}
where $\rho(dh) = \exp(-\frac{1}{4 \pi } \int_{\mb D} | \nabla h |^2 d\lambda  - c \int_{\mb U} h d\lambda) \mc{D}h$ reads, with $p_i = f_i^*$,
$$
\int \left(\sum_i\left((\chi-\alpha+2\pi c) \int_{\mb U} p_i d\mu
-(\chi + 2\xi + (2\xi)^{-1}) \int_{\mb U}\partial_nH p_i d\mu -\beta\dashint_{\mb U} p_i d\lambda \right)\tilde\psi_i - \frac{1}{2}((2\pi c)^2- \xi^2) |\mu| \tilde{\psi}  \right)\rho(dh) = 0. 
$$
\end{thm}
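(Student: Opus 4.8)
The plan is to read off Theorem \ref{thm-invariance} as the bookkeeping summary of the integration by parts identities already assembled in this subsection; the genuine content sits in the lemmas, and the theorem itself amounts to lining up coefficients.

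First I would invoke Proposition \ref{Prop:invariance-condition} together with the Corollary to Lemma \ref{lem:boundary-localization} to restate the invariance condition ${\frac{d}{dt}}_{|t=0}\int(F\circ\theta_t)(\phi,h)\,\nu(d\phi)\rho(dh)=0$ in the boundary-localised form \eqref{eq:condition-boundary}. Thus the theorem reduces to the claim that, for $\rho$ the Gaussian measure with the quadratic-plus-linear potential $\VV$ of \eqref{eq:V-exp-DV} (so $D\VV(h)=-\tfrac1{2\pi}\partial_nHh+c$), the bracketed integrand in \eqref{eq:condition-boundary} differs from the bracketed integrand in the theorem's display by a quantity whose $\rho$-integral vanishes identically. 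Throughout, $f_1,\dots,f_n\in C^\infty_c({\mb D}\setminus\{0\})$ and $\psi\in C^\infty_c(\R)$ are fixed with at least one $f_i$ of nonzero mean, so that every functional below is $\rho$-integrable (see Section \ref{sec:integrability}).

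Second I would assemble the ``companion'' identity valid for $\rho$. Starting from the integration by parts formula \eqref{eq:IBP-General} applied to the pairs $(\varphi,f)=(\tilde\psi_i,f_i)$, summed over $i$ and specialised to the kernel $R_{f_i}=V_{p_i}$ of Lemma \ref{lem:kernel-V} (legitimate by the product representation \eqref{eq:kernel-product-form} and the finite-rank approximation established there), I would substitute $D\VV(h)=-\tfrac1{2\pi}\partial_nHh+c$ and use the three contractions of $V$ together with the defining relation \eqref{eq:singular-V} to arrive at \eqref{eq:conseq-ibp}. Then I would eliminate the $\dashint p_i\dashint p_j$ piece by \eqref{eq:IBP-potential} with $(\varphi,\ell,k)=(\tilde\psi_i,\dashint p_i,1)$, and the $\tilde p_i\tilde p_j$ piece by Lemma \ref{Lem:rotational-invariance-v1}, producing the combined identity displayed just before the theorem. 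A last integration by parts in the zero mode of $h$ (where $D_m\VV$ is the constant $2\pi c$ and $|\mu|=e^{-\xi h}$ contributes the factor $-\xi$) converts the $\dashint p_i\,|\mu|\,\tilde\psi_i$ term into $\tfrac12((2\pi c)^2-\xi^2)|\mu|\tilde\psi$ standing outside the sum over $i$.

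Third, I would subtract the companion identity from \eqref{eq:condition-boundary}. The single non-boundary-localised term $\tfrac1{2\pi}\int_{\mb D}(D_\mu f_i)h\,d\lambda\,\tilde\psi_i$ occurs with the same coefficient in both and cancels, as does the quadratic term $\pi\sum_{i,j}(\int_{\mb U}p_ip_jd\mu)\tilde\psi_{i,j}$; what remains is exactly the integrand in the theorem's display, whose $\rho$-integral therefore vanishes precisely when \eqref{eq:condition-boundary} — equivalently the invariance condition — holds. I expect the only real friction to be the coefficient tracking in the second step: in particular, verifying that specialising \eqref{eq:IBP-General} to the kernel $V$ reproduces $\int_{\mb D}(D_\mu f_i)h\,d\lambda$ with coefficient exactly $\tfrac1{2\pi}$, matching its coefficient in \eqref{eq:condition-boundary}, since it is this equality of coefficients (rather than any cancellation) that lets the lone bulk term disappear, together with the sign and constant bookkeeping through Lemma \ref{Lem:rotational-invariance-v1} and the two zero-mode integrations by parts.
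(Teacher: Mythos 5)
Your proposal is correct and follows essentially the same route as the paper's own argument: the paper likewise derives \eqref{eq:conseq-ibp} from \eqref{eq:IBP-General} specialised to the kernel $V$, removes the $\dashint p_i\dashint p_j$ and $\tilde p_i\tilde p_j$ terms via \eqref{eq:IBP-potential} with $(\tilde\psi_i,\dashint p_i,1)$ and Lemma \ref{Lem:rotational-invariance-v1}, performs the zero-mode integration by parts, and compares the result with the boundary-localised condition \eqref{eq:condition-boundary} so that the bulk term $\tfrac1{2\pi}\int_{\mb D}(D_\mu f_i)h\,d\lambda$ and the quadratic term cancel. The coefficient check you flag (the $\tfrac1{2\pi}$ in front of the bulk term) is exactly what Lemma \ref{lem:kernel-V} and the contraction identities guarantee, so no new ingredient is needed.
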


\section{Dirichlet forms}

\label{sec:dirichlet-forms}

In this section, we specialize to the pure gravity case. We first recall below the form of the generator in this case. Then, we derive an expression of the Dirichlet form in section \ref{sec:dirichlet-expression}. In section \ref{sec:dirichlet-other-proof}, we give an alternative (formal, non-rigorous but we believe can be made rigorous with additional efforts) proof of this fact. In section \ref{sec:symmetric-part}, we discuss the symmetric part of the Dirichlet form. In particular, we discuss the associated dynamics and we explain how to make sense of a weak solution associated to this process using the formalism of Dirichlet forms.

In the pure gravity case, an invariant measure associated with the boundary field of the process \eqref{eq:beta=0} is given by
\begin{equation}
\label{eq:invariant-measure}
\rho(dh) = e^{\xi m}  \rho_0(dh_0) dm = \exp \left( - \mc{V}(h) \right) \mc{D} h = \exp \left(  - \frac{1}{4\pi} \int h (- \partial_n H h) d\lambda + \frac{\xi}{2\pi} \int h d\lambda  \right) \mc{D} h
\end{equation}
where the field considered is given by $h = h_0 + m = (h - \dashint_{\mb{U}} h d\lambda) + \dashint_{\mb{U}} h d\lambda$. For $F = \varphi( \int h p_1 d\lambda, \dots, \int h p_n d\lambda)$,
\begin{equation}
\label{eq:generator-pure-gravity}
\mc{L} F = \sum_i b(p_i) \partial_i \varphi + \frac{1}{2} \sum_{i,j} \sigma(p_i, p_j) \partial_{i,j} \varphi
\end{equation}
where $\sigma(p,q) = (2\pi)^2 \int_{\mb{U}} pq d\mu$ and
\begin{align*}
b(p)  =& \int_{\mb{D}} h (D_{\mu} f) d\lambda + 2\pi (2\xi + \frac{1}{2\xi}) \int_{\mb{U}} \partial_n H  p d\mu  + 2\pi \xi \int_{\mb{U}} p d\mu  \\
= &  \int_{\mb{U}^2} \partial_n H h (w')  V_{p}(w',w) \lambda(dw') \mu(dw)  + 2\pi (2\xi + \frac{1}{2\xi}) \int_{\mb{U}} \partial_n H  p d\mu  + 2\pi \xi \int_{\mb{U}} p d\mu  
\end{align*}
This is obtained by taking
$$
\xi = \frac{1}{\sqrt{6}} \quad Q = \frac{5}{\sqrt{6}} \quad \chi = -Q \quad \alpha = -2Q- \omega \quad 2 \pi c = - \xi \quad -\omega = \sqrt{\frac{8}{3}} \quad \alpha = - \sqrt{6}, \quad \beta = 0,
$$
and using specifically that $\chi - \alpha = Q + \omega = \xi$ in \eqref{eq:L-boundary-loc}.

 We provide below formal SPDE associated with this generator. In what follows and until the beginning of Section \ref{sec:integrability}, the computations are not rigorous and derived only to provide a formal SPDE,
\begin{equation}
\label{eq:Dynamic-Global-Loc}
\dot{h}_t(w) =  2\pi \left( \partial_n H h  e^{- \xi h} -  \widetilde{\partial_n H h}  \widetilde{e^{- \xi h}}  \right) + 2\pi  Q  \partial_n H (e^{-\xi h(w)}) + 2\pi \xi e^{-\xi h(w)} + 2\pi e^{-\frac{1}{2}\xi h} W.
\end{equation}
Here $W(dw,dt)$ is an $L^2(\lambda)$ space-time white noise on $\mb{U}$. We compute (formally)
\begin{align*}
- \int_{\mb{U}} (D_{\mu} f) h d\lambda = & \iint_{\mb{U}^2} \Im \left( \frac{w'+w}{w'-w} \right) (\tilde{p}(w') - \tilde{p}(w)) \partial_n H h(w') d\mu_h(w) \\
 = & 2\pi \iint_{\mb{U}^2} \frac{1}{2\pi} \Im \left( \frac{w'+w}{w'-w} \right) (\tilde{p}(w') - \tilde{p}(w)) \partial_n H h(w') e^{-\xi h(w)} d\lambda(w) d\lambda(w') \\
 = & 2\pi \int \tilde{p} (w') \partial_n H h(w') \left( \int  \frac{1}{2\pi} \Im \left( \frac{w'+w}{w'-w} \right) e^{-\xi h(w)} d\lambda(w) \right) d\lambda (w')  \\ 
& - 2\pi \int \tilde{p}(w) e^{-\xi h(w)} \left( \int \frac{1}{2\pi} \Im \left( \frac{w'+w}{w'-w} \right) \partial_n H h(w') d\lambda(w') \right) d\lambda(w) \\
=  &  - 2\pi \int \tilde{p} (w') \partial_n H h(w') \widetilde{e^{-\xi h}} (w') d\lambda (w')  - 2\pi \int \tilde{p}(w) e^{-\xi h(w)} \widetilde{\partial_n H h} (w) d\lambda(w) \\
= & 2\pi \int p(w') \widetilde{\left( \partial_n H h \  \widetilde{e^{-\xi h}} \right)} (w') d\lambda(w') + \int p(w) \widetilde{ \left( \widetilde{\partial_n H h} \ e^{-\xi h} \right) }(w) d\lambda(w) \\
= & -2 \pi \int p(w)  \left( \partial_n H h  e^{- \xi h} -  \widetilde{\partial_n H h}  \widetilde{e^{- \xi h}} + 0 \right) d\lambda(w) 
\end{align*}
and altogether, an associated SPDE is
\begin{equation}
\dot{h}_t(w) = 2\pi\left( \partial_n H h + \xi  \right) e^{-\xi h(w)} - 2\pi  \widetilde{\partial_n H h}  \widetilde{e^{- \xi h}}  + 2\pi   Q  \partial_n H (e^{-\xi h(w)}) + 2\pi e^{-\frac{1}{2}\xi h} W.
\end{equation}
It is not clear how to make sense of several of these terms. This can also be written as
\begin{equation}
\dot{h}_t(w) = - 2\pi \wt{\partial_n H h \wt{e^{-\xi h}}} + \xi   e^{-\xi h(w)}  - 2\pi \xi^{-1} (1-\xi  Q  ) \partial_n H (e^{-\xi h(w)}) + 2\pi e^{-\frac{1}{2}\xi h} W 
\end{equation}
where we used the identity $\partial_n H (e^{-\xi h}) = - \partial_\theta (\widetilde{e^{-\xi h}}) = + \xi \widetilde{\partial_\theta h e^{-\xi h}} = \xi \widetilde{(\widetilde{\partial_n H h } e^{-\xi h})}$.

\subsection{Integrability}
\label{sec:integrability}

In this section, we prove the integrability statements that were postponed. Consider a function $G = \psi (\int_{\mb{U}} q_1 h d\lambda, \dots \int_{\mb{U}} q_n h d\lambda)$ with $\psi$ compactly supported and with at least one $j$ such that $\int_{\mb{U}} q_j d\lambda \neq 0$. Above, we used the integrability of terms of the form
$$
\iint_{\mb{D}} (D_{\mu} f) h d\lambda \partial_i \psi \rho(dh), \qquad   \iint_{\mb{U}} q_i d\mu \partial_i \psi \rho(dh), \qquad \iint_{\mb{U}} \partial_n H q_i d\mu \partial_i \psi \rho(dh), \qquad \iint_{\mb{U}} p_i p_j d\mu \partial_{i,j} \psi \rho(dh)
$$
where $f$ is compactly supported in $\mb{D}$. Suppose $q_i \in H^s(\mb{U})$ and $s>3/2$ and set $q_{\infty} := \max_{i \leq m} \| q_i \|_{L^{\infty}(\mb{U})}+ \| \partial_n H q_i \|_{L^{\infty}(\mb{U})}$. Then $q_{\infty} < \infty$ by the Sobolev embedding theorem. Note that the $q_i$'s we consider are always in this Sobolev space. Above, they are in particular always of the form $q = H^* f$ for some $f$ with compact support in $\mb{D} \backslash \{  0 \}$. We use the following lemma to show that all these terms are well-defined.

\begin{Lem}
Suppose $q_i \in H^s(\mb{U})$ and $s>3/2$ with at least one $i$ such that $\int_{\mb{U}} q_i d\lambda \neq 0$. For an arbitrary compactly supported nonnegative function $\varphi_a$, $\delta \in \mb{R}$, we have, for all $\xi \in (-1,1)$,
\begin{equation}
\label{lem:part1}
\int   \varphi_a \left( \int_{\mb{U}} h q_1 d\lambda, \dots, \int_{\mb{U}} h q_m  d\lambda \right)  | \mu | e^{\delta m} d\rho < \infty
\end{equation}
and, with $f$ compactly supported in $\mb{D}$,
\begin{equation}
\label{lem:part2}
\iint_{\mb{D}} \varphi_a \left( \int_{\mb{U}} h q_1 d\lambda, \dots, \int_{\mb{U}} h q_m d\lambda \right)  | (D_{\mu} f) h d\lambda | \rho(dh) < \infty.
\end{equation}
\end{Lem}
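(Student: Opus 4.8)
The plan is to integrate out the zero mode by hand; this is forced, because the only reason the integrals in \eqref{lem:part1}--\eqref{lem:part2} are finite is that the a priori unbounded $m$-integration is cut down to a bounded one once one uses both the compact support of $\varphi_a$ and the hypothesis that some $q_i$ has nonzero mean. Concretely, write $h=m+h_0$ with $m=\dashint_{\mb{U}}h\,d\lambda$ and $h_0$ the centred trace field, so that $\rho(dh)=e^{\xi m}\rho_0(dh_0)\,dm$; only the structure of $\rho$ as a fixed exponential weight in $m$ times $\rho_0(dh_0)\,dm$ will be used. Since circle averages ignore the constant $m$, the chaos factorises, $\mu=e^{-\xi h}=e^{-\xi m}M(h_0)$, where $M(h_0)$ is the GMC of $h_0$ and $Z:=|M(h_0)|$ is an a.s.\ finite positive functional of $h_0$ alone. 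The one probabilistic input about $\mu$ is that, since $\xi^2<1$, this subcritical GMC on $\mb{U}$ has a finite moment of some order $p=p(\xi)>1$ (one may take any $p<\xi^{-2}$): $\E_{\rho_0}[Z^p]<\infty$ \cite{Berestycki17}.

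Fix a box $\prod_i[-A_i,A_i]\supseteq\mathrm{supp}\,\varphi_a$ and a coordinate $q_1$ with $c_1:=\int_{\mb{U}}q_1\,d\lambda\neq0$. On the event that $(\int_{\mb{U}}hq_1\,d\lambda,\dots)\in\mathrm{supp}\,\varphi_a$ one has $|c_1m+\int_{\mb{U}}h_0q_1\,d\lambda|\le A_1$, so $m$ is confined to an interval $I(h_0)$ of length $2A_1/|c_1|$ with $\sup_{m\in I(h_0)}|m|\le(A_1+|\int_{\mb{U}}h_0q_1\,d\lambda|)/|c_1|$. For \eqref{lem:part1}, inserting $|\mu|=e^{-\xi m}Z$ lets the factors $e^{\mp\xi m}$ coming from $\mu$ and from $\rho$ cancel, so
\[
\int\varphi_a(\cdots)\,|\mu|\,e^{\delta m}\,d\rho\ \le\ \|\varphi_a\|_\infty\int\rho_0(dh_0)\,Z(h_0)\int_{I(h_0)}e^{\delta m}\,dm\ \le\ C\,\E_{\rho_0}\!\Big[Z\,e^{(|\delta|/|c_1|)\,|\int_{\mb{U}}h_0q_1\,d\lambda|}\Big],
\]
and since $\int_{\mb{U}}h_0q_1\,d\lambda$ is a centred Gaussian with finite variance under $\rho_0$, Hölder's inequality with exponents $p$ and $p/(p-1)$, together with $\E_{\rho_0}[Z^p]<\infty$ and the Gaussian exponential-moment bound, gives finiteness.

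For \eqref{lem:part2} I would first localise the bulk quantity to the boundary via Lemma~\ref{lem:kernel-V}, writing $\int_{\mb{D}}(D_\mu f)h\,d\lambda=\int_{\mb{U}}\Phi(w')\,\partial_nHh(w')\,d\lambda(w')$ with $\Phi(w'):=\int_{\mb{U}}V_{f^*}(w,w')\,\mu(dw)$. Since $V_{f^*}(w,w')=\sum_{m,n}b_{m,n}w^m(w')^n$ with coefficients decaying exponentially in $|m|+|n|$ (Lemma~\ref{lem:kernel-V}), $\Phi$ is real-analytic on $\mb{U}$ with $\|\Phi\|_{H^s(\mb{U})}\le C_s\,|\mu|$ for every $s$, while $\partial_nHh=\partial_nHh_0$ is independent of $m$. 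Taking $s>1$, so that $R:=\|\partial_nHh_0\|_{H^{-s}(\mb{U})}$ has all polynomial moments under $\rho_0$ (it is the norm of a Gaussian field in a Hilbert space, with $\E_{\rho_0}R^2=2\pi\sum_m(1+\lambda_m)^{-2s}\lambda_m<\infty$), the $H^s$--$H^{-s}$ duality gives $|\int_{\mb{D}}(D_\mu f)h\,d\lambda|\le C_s\,|\mu|\,R=C_s\,e^{-\xi m}Z(h_0)R(h_0)$. Exactly as before the $e^{\mp\xi m}$ cancel, the mean constraint confines $m$ to $I(h_0)$, the remaining (now $m$-independent) integrand integrates over $m$ to at most $(2A_1/|c_1|)Z(h_0)R(h_0)$, and Hölder with $\E_{\rho_0}[Z^p]<\infty$ and $\E_{\rho_0}[R^{p/(p-1)}]<\infty$ closes \eqref{lem:part2}. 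I expect the main obstacle to be precisely this last reduction: one needs to rewrite $\int_{\mb{D}}(D_\mu f)h\,d\lambda$ as a boundary pairing against a \emph{random} smooth function $\Phi$ whose $H^s$ norms are controlled by the chaos mass $|\mu|$ (which is exactly what Lemma~\ref{lem:kernel-V} delivers), and to check that the GMC moment of order $>1$ used in the Hölder steps is available for every $\xi\in(-1,1)$, which holds since $\xi^{-2}>1$. Everything else is the zero-mode localisation together with routine bookkeeping.
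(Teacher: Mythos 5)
Your argument is correct, and the zero-mode part (decompose $h=m+h_0$, cancel $e^{\mp\xi m}$ between $\mu$ and $\rho$, confine $m$ using the compact support of $\varphi_a$ and the nonzero mean of some $q_i$, then H\"older with a GMC moment of order $p>1$, $p<\xi^{-2}$, against Gaussian exponential moments) is exactly the paper's mechanism for \eqref{lem:part1}; you even handle the $e^{\delta m}$ factor explicitly where the paper only remarks that $\delta\neq 0$ is straightforward. Where you genuinely diverge is \eqref{lem:part2}: the paper stays in the bulk, noting that $D_\mu f$ has zero mean (so $m$ drops out of $\int_{\mb D}(D_\mu f)h\,d\lambda$) and that the kernel $\Re\bigl(\partial_z\bigl(z\frac{z+w}{z-w}f(z)\bigr)\bigr)$ is uniformly bounded on $\mathrm{supp}\,f\times\mb U$, so the term is bounded by $C\,|\mu_{h_0}|\,\sup_{\mathrm{supp}\,f}|h_0|$ and one concludes by H\"older plus Fernique for the sup of the harmonic extension on a compact set. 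You instead push everything to the boundary via Lemma \ref{lem:kernel-V}, bound $\|\int V_{f^*}(\cdot,w)\mu(dw)\|_{H^s(\mb U)}\le C_s|\mu|$ using the exponential decay of the coefficients $b_{m,n}$, and pair against $\partial_nHh_0\in H^{-s}$, $s>1$, whose norm has all moments. Both work; the paper's route is more elementary (no need to invoke the kernel $V$ or Sobolev duality), while yours is arguably more in the spirit of the boundary-localized generator of Proposition \ref{Prop:gen-loc} and would apply verbatim to any term of the form $\int_{\mb U^2}V_p\,\partial_nHh\,d\mu\,d\lambda$. One caveat: your duality bound controls $\bigl|\int_{\mb D}(D_\mu f)h\,d\lambda\bigr|$, i.e.\ the absolute value of the integral, not the integral of $|(D_\mu f)h|$ over $\mb D$, which is the literal (and slightly stronger) reading of \eqref{lem:part2}; since the quantity actually needed in Sections 4--5 is the integrability under $\rho$ of the scalar drift term, your weaker version suffices for all uses of the lemma, but if you want the literal statement you should supplement the duality step with the paper's pointwise bound $|(D_\mu f)(z)|\le C|\mu|$ on $\mathrm{supp}\,f$ together with the control of $|m|$ on the support of $\varphi_a$.
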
 

\begin{proof}
Assume $\xi \in (-1,1)$. By assumption, there exists $i$ such that $\int_{\mb{U}} q_i d\lambda \neq 0$. Furthermore, since $\varphi_a$ is bounded, it is sufficient to use the following inequalities
\begin{align*}
\int \ind_{-C \leq \int_{\mb{U}} h q_i d\lambda \leq C } |\mu_h| d\rho & = \int \ind_{-C \leq \int_{\mb{U}}  h_0 q_i d\lambda + 2\pi m \dashint q_i \leq C}  |\mu_{h_0}| d\rho_0 dm \\
& \leq C\int (C+ \left| \int h_0 q_i d\lambda \right|)  |\mu_{h_0}| d\rho_0   < \infty
\end{align*}
by using H\"older inequality with $\E(|\mu_{h_0}|^p)<\infty$ for $p$ close enough to $1$, the other term being a Gaussian variable, it has finite exponential moments. The generalization to $\delta \neq 0$ is straightforward.

Now, we focus on the term containing $\int_{\mb{D}} (D_{\mu} f) h d\lambda$. First, note that $D_{\mu} f$ has zero mean (see \eqref{eq:dmu-expressions}) so decomposing $h = h_0 + m$ gives $\int_{\mb{D}} (D_{\mu} f) h d\lambda = \int_{\mb{D}} (D_{\mu} f) h_0 d\lambda$. Furthermore, we have
\begin{align*}
\int  \left| \Re \left( \partial_{z} (z \frac{z+w}{z-w} f(z)) \right) h(z) \mu(dw) \lambda(dz) \right| \varphi_a(\dots) \rho(dh) & \leq C \int |\mu_{h_0}| \sup_{z \in \mathrm{Sup}(f)} |h_0(z)| \ind_{-C \leq \int_{\mb{U}} h q_i d\lambda \leq C }  d\rho_0 dm \\
& \leq C \int |\mu_{h_0}| \sup_{z \in \mathrm{Sup}(f)} |h_0(z)|  (C+ \left| \int h_0 q_i d\lambda \right|) d\rho_0
\end{align*}
and we use H\"older inequality with $\E(|\mu_{h_0}|^p)<\infty$ for $p$ close enough to $1$. Here, we use that $h_0$ is a smooth Gaussian function on any compact subset of $\mb{D}$, by Fernique theorem it has finite exponential moments. \end{proof}

Below, we will also need the following lemma, associated with the chaos measure $\mu_{\xi}(h) = e^{\xi h}$ for $\xi \in (0,1)$, when we consider the symmetric Dirichlet form.

\begin{Lem}
\label{lem:integrability-sym}
If $F = \varphi ( |\mu_\xi|, \int p_1 d\mu_\xi, \dots, \int p_n d\mu_\xi)$ where the $p_i$'s are smooth and $\varphi$ is nonnegative and has compact support included in $(\delta,\delta^{-1}) \times K$ where $K$ is a compact of $\mb{R}^n$, then $\int F | \mu_\xi| d\rho < \infty$. Furthermore, when $q$ is smooth, $\int F | \langle \partial_n H h, q \rangle_{L^2(\lambda)} | |\mu_\xi | d\rho < \infty$.
\end{Lem}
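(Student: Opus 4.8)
The plan is to disintegrate $\rho$ over its zero mode and reduce both assertions to an elementary one-dimensional integral in the mean, controlled by moment bounds for the boundary Gaussian multiplicative chaos. Write $h=h_0+m$ with $m=\dashint_{\mb U}h\,d\lambda$, so that $\rho(dh)=e^{\xi m}\,\rho_0(dh_0)\,dm$ by \eqref{eq:invariant-measure}. By the behavior of the GMC under constant shifts one has $\mu_\xi(h)=e^{\xi m}\,\mu_\xi(h_0)$, hence $|\mu_\xi(h)|=e^{\xi m}|\mu_\xi(h_0)|$ and $\int_{\mb U}p_i\,d\mu_\xi(h)=e^{\xi m}\int_{\mb U}p_i\,d\mu_\xi(h_0)$ for each $i$. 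Moreover $\partial_nH$ annihilates constants ($\lambda_0=0$ in the notation of the section on the Dirichlet-to-Neumann operator), so $\langle\partial_nHh,q\rangle_{L^2(\lambda)}=\langle\partial_nHh_0,q\rangle_{L^2(\lambda)}=\langle h_0,\partial_nHq\rangle_{L^2(\lambda)}$ is a function of $h_0$ alone, and --- since $\partial_nHq$ is smooth when $q$ is --- it is a centered Gaussian variable of finite variance under $\rho_0$. The contrast with the earlier integrability lemma is that here the weight $e^{\xi m}$ of $\rho$ and the factor $e^{\xi m}$ of $|\mu_\xi|$ reinforce rather than cancel, which is why the compact support of $\varphi$ in the $|\mu_\xi|$-variable is the crucial hypothesis.

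For the first assertion I would bound $F$ by $\|\varphi\|_{\infty}\,\ind_{\{\delta<|\mu_\xi(h)|<\delta^{-1}\}}$ (simply dropping the constraint on the remaining $n$ coordinates, which only shrinks the domain further), so that
$$\int F\,|\mu_\xi|\,d\rho \;\le\; \|\varphi\|_{\infty}\int |\mu_\xi(h_0)| \left( \int_{\R} e^{2\xi m}\,\ind_{\{\delta < e^{\xi m}|\mu_\xi(h_0)| < \delta^{-1}\}}\,dm\right)\rho_0(dh_0).$$
The inner integral is computed explicitly by the substitution $u=e^{\xi m}$ and equals $\frac{\delta^{-2}-\delta^2}{2\xi}\,|\mu_\xi(h_0)|^{-2}$, so the right-hand side is $\frac{\|\varphi\|_{\infty}(\delta^{-2}-\delta^2)}{2\xi}\int |\mu_\xi(h_0)|^{-1}\,\rho_0(dh_0)$, which is finite because all negative moments of the total mass of the boundary GMC $\mu_\xi(h_0)$, $\xi\in(0,1)$, are finite (a standard GMC fact, cf. \cite{Berestycki17}). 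Note that the upper bound $|\mu_\xi|<\delta^{-1}$ forced by the support of $\varphi$ is genuinely needed here, since without it the $m$-integral would diverge at $+\infty$.

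For the second assertion I would run the same reduction; the extra factor $|\langle\partial_nHh,q\rangle_{L^2(\lambda)}|=|\langle h_0,\partial_nHq\rangle_{L^2(\lambda)}|$ depends on $h_0$ only and factors out of the $m$-integral, leaving
$$\int F\,|\langle\partial_nHh,q\rangle_{L^2(\lambda)}|\,|\mu_\xi|\,d\rho \;\le\; \frac{\|\varphi\|_{\infty}(\delta^{-2}-\delta^2)}{2\xi}\int |\langle h_0,\partial_nHq\rangle_{L^2(\lambda)}|\;|\mu_\xi(h_0)|^{-1}\,\rho_0(dh_0).$$
By Cauchy-Schwarz this is at most $\frac{\|\varphi\|_{\infty}(\delta^{-2}-\delta^2)}{2\xi}\big(\E_{\rho_0}|\langle h_0,\partial_nHq\rangle|^2\big)^{1/2}\big(\E_{\rho_0}|\mu_\xi(h_0)|^{-2}\big)^{1/2}$, and both factors are finite: the first is a Gaussian second moment ($\partial_nHq$ smooth), the second is the second negative moment of the GMC mass.

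The only non-elementary ingredient is the finiteness of the negative moments of the boundary GMC total mass; everything else is bookkeeping around the zero-mode disintegration and an application of Fubini, legitimate because $\varphi\ge 0$ and $|\mu_\xi|\ge 0$. I therefore expect the only point requiring care to be stating that negative-moment estimate precisely in the normalization used here (covariance $-2\log|w-w'|$ on $\mb U$, Wick exponent $\xi\in(0,1)$), after which the two bounds above close the argument.
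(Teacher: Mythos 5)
Your proof is correct and follows essentially the same route as the paper: disintegrate over the zero mode, use the compact support in the $|\mu_\xi|$-variable to reduce the $m$-integral to a bound of order $|\mu_\xi(h_0)|^{-1}$ (the paper bounds the set of admissible $m$'s and the integrand pointwise, while you compute the integral exactly — an inessential difference), then invoke finiteness of negative moments of the boundary GMC mass, and for the second assertion the same reduction plus Cauchy–Schwarz.
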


\begin{proof}
The Lebesgue measure of $m$'s such that $\{ \delta \leq e^{\xi m} | \mu_\xi(h_0) | \leq \delta^{-1} \}$ is bounded and independent of $h_0$. On this event, $e^{\xi m } \leq \delta^{-1} |\mu_{\xi}(h_0)|^{-1}$ so
$\int F | \mu_\xi|  d\rho \leq \int F e^{2 \xi m} |\mu_\xi(h_0)| dm \rho_0(d h_0) \leq C \E ( | \mu_{\xi}(h_0)|^{-1}) < \infty$. For the second assertion, note that $\langle \partial_n H h, q \rangle_{L^2(\lambda)}$ does not depend on $m$. Following the same inequalities, we bound it from above by $C \E( \langle \partial_n H h_0, q \rangle_{L^2(\lambda)} |\mu_{\xi}(h_0)|^{-1})$ and we conclude by using the Cauchy-Schwarz inequality. \end{proof}

\subsection{Expression of the Dirichlet form}
\label{sec:dirichlet-expression}

Here, we derive the expression of the Dirichlet form \eqref{eq:Dirichlet-Form}. This is the content of the following theorem, whose proof relies on the technology developed in Section \ref{sec:section-gen} and Section \ref{sec-invariance} but with slight differences.
\begin{thm}
\label{thm:dirichlet-form}
We suppose that $\gamma = \sqrt{8/3}$, $\xi = \gamma/d_{\gamma}$ and $\rho(dh)$ is the $\sigma$-finite measure given in \eqref{eq:invariant-measure}. Then, for bulk cylindrical test functionals $F = \varphi \left( \int_\mb{D} f_1 h d\lambda, \dots, \int_\mb{D} f_n h d\lambda \right)$ and $G = \psi \left( \int_\mb{D} g_1 h d\lambda, \dots, \int_\mb{D} g_m h d\lambda \right)$,   
\begin{align*}
\mc{E}(F,G):= \int F (-\mc{L} G) d\rho = & 2\pi^2 \int \langle DF, DG \rangle_{L^2(\mu)} d\rho + 2\pi^2  \iint_{\mb{U}} \left( \widetilde{\widetilde{DF} DG} - \widetilde{DF \widetilde{DG}} \right) d\mu d\rho  \\
& + 2\pi^2 \xi  \int \left( \int_{\mb{U}} DF d\lambda \cdot G - \int_{\mb{U}} DG d\lambda \cdot F \right) | \mu | d\rho.  \nonumber
\end{align*}
\end{thm}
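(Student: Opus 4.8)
The goal is to compute $\mc{E}(F,G) = \int F(-\mc{L}G)\,d\rho$ starting from the expression of $\mc{L}$ in the pure gravity case (equation \eqref{eq:generator-pure-gravity}) and turning all derivatives on $G$ into the quadratic form appearing in the statement, via Gaussian integration by parts \eqref{prel:ibp}–\eqref{eq:IBP-potential}. I would proceed term by term in the drift $b(g_j)$ and the diffusion coefficients $\sigma(g_i,g_j)$.

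First I would record the three contributions to $-\mc{L}G$: the drift piece $-\sum_j b(g_j)\psi_j$ with $b(g_j) = \int_{\mb{U}^2} \partial_n Hh(w') V_{q_j}(w',w)\lambda(dw')\mu(dw) + 2\pi(2\xi+\frac{1}{2\xi})\int_{\mb{U}}\partial_n H q_j\,d\mu + 2\pi\xi\int_{\mb{U}} q_j\,d\mu$ (with $q_j = g_j^*$, $\mu = e^{-\xi h}$), and the second-order piece $-\frac12\sum_{i,j}\sigma(q_i,q_j)\psi_{i,j}$ with $\sigma(q_i,q_j) = 4\pi^2\int_{\mb{U}} q_i q_j\,d\mu$. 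Multiplying by $F$ and integrating against $\rho$, the $h$-dependent part of $b$ (the $V$-kernel term, which is linear in $\partial_n Hh$) must be integrated by parts: using \eqref{prel:ibp} in the direction of the relevant test function moves $\partial_n Hh$ off and produces, by the contraction identities of Lemma 4.20 (the lemma computing $\int_{\mb{U}} V_p(w,w')d\lambda(w)$, $V_p(w,w)$, $\frac{1}{2\pi}(pV_q+qV_p)$ and $A(p,q)=\frac{1}{2\pi}(pV_q-qV_p)=\wt{\wt{p}q - p\wt{q}}$), a symmetric $\langle DF,DG\rangle_{L^2(\mu)}$-type term plus an antisymmetric $A(\cdot,\cdot)$ term plus zero-mode boundary terms. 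The GMC derivative rule $D_p\int f\,M_\alpha(h) = \alpha\int fp\,M_\alpha(h)$ is what produces the factors of $\xi$ when the shift hits $\mu = e^{-\xi h}$, and is the mechanism behind the third (mixed $F\cdot G$) term in \eqref{eq:Dirichlet-Form}.

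The bookkeeping I anticipate is: the $V$-kernel term, after IBP, splits as $2\pi^2\int\langle DF,DG\rangle_{L^2(\mu)}\,d\rho$ plus the antisymmetric contraction $2\pi^2\iint_{\mb{U}}\wt{\wt{DF}\,DG - DF\,\wt{DG}}\,d\mu\,d\rho$ (this is exactly $A(DF,DG)$ integrated), plus terms involving $\dashint DF$, $\dashint DG$, $\partial_n H$-contractions and $|\mu|$; the explicit $\partial_n H$ and $\xi$ terms in $b$ contribute further such pieces; the $\sigma$-term contributes $2\pi^2\int\langle DF,DG\rangle_{L^2(\mu)}$ when one of the two $\psi$-derivatives is integrated by parts against $F$, matching up to give the right total coefficient. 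The point of the $\gamma=\sqrt{8/3}$ specialization is that the coefficient combination $2\xi + (2\xi)^{-1} = Q$ together with $\chi-\alpha = \xi$ and $2\pi c = -\xi$ forces the coefficients of the $\partial_n H q$ terms to cancel against the IBP of the potential (exactly as in the derivation of Theorem \ref{thm-invariance}), leaving only the three terms displayed. This cancellation I would verify by re-running the computation in Section \ref{sec:invariance-condition} with $F$ in place of $1$, i.e. keeping track of the $F$-factor throughout \eqref{eq:conseq-ibp}, Lemma \ref{Lem:rotational-invariance-v1}, and the zero-mode IBP.

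The main obstacle I expect is \emph{not} the algebra but the \emph{integrability/well-definedness} of the individual terms: $\mc{L}G$ alone need not be $\rho$-integrable against $F$ term by term (this is the whole point of the discussion in Section \ref{sec:symmetric-part} — $F$ is in the domain of the generator but perhaps not of the symmetric part), so one must be careful that the intermediate quantities $\int F b(g_j)\psi_j\,d\rho$ etc. make sense. Here I would lean on the integrability Lemma of Section \ref{sec:integrability} (which controls $\iint_{\mb{D}}|(D_\mu f)h|\,\varphi_a\,d\rho$, $\iint_{\mb{U}}|p_ip_j|\,d\mu\,\varphi_a\,d\rho$, $\iint_{\mb{U}}|\partial_n Hq_i|\,d\mu\,\varphi_a\,d\rho$ and $\int\varphi_a|\mu|e^{\delta m}\,d\rho$ via Hölder with $\E|\mu_{h_0}|^p<\infty$ for $p$ near $1$ and Gaussian/Fernique moments), using crucially that both $F$ and $G$ are bulk cylindrical with compactly supported $\varphi,\psi$ and at least one test function of nonzero mean so that the $e^{\xi m}\,dm$ zero-mode integral converges. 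Once every term is individually finite, Fubini lets one reorganize the sum into the claimed symmetric + antisymmetric + mixed decomposition. A secondary subtlety is the use of the harmonic-conjugate identity $\wt{(\wt{p}q + p\wt{q})} = -(pq - \wt{p}\wt{q}) + \dashint p\,\dashint q$ and the antisymmetry $\langle\wt{p},q\rangle_{L^2(\mb{U})} = -\langle p,\wt{q}\rangle_{L^2(\mb{U})}$ to put the antisymmetric part into the displayed form $\wt{\wt{DF}\,DG - DF\,\wt{DG}}$; one must check the $\dashint$ correction terms recombine with the $\xi$-term $2\pi^2\xi\int(\int_{\mb{U}}DF\,d\lambda\cdot G - \int_{\mb{U}}DG\,d\lambda\cdot F)|\mu|\,d\rho$ rather than leaving a spurious residue, which again is where the arithmetic identities $2\xi+(2\xi)^{-1}=Q$ and $\xi = -2\pi c$ are used.
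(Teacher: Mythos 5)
Your proposal follows essentially the same route as the paper's proof: the paper packages your plan of ``re-running the Section \ref{sec:invariance-condition} computation with the factor $F$ kept throughout'' as Lemma \ref{Lem:ibp-muh} (the Gaussian integration by parts for the $V$-kernel term carried out against the product $\varphi\psi$, via the product-form decomposition of $V_{p}$), Lemma \ref{Lem:rot-invariance} (the $F$-carrying version of Lemma \ref{Lem:rotational-invariance-v1}), the zero-mode integration by parts, and the contraction identities $pV_q+qV_p=2\pi(pq+\tilde p\tilde q-\dashint p\dashint q)$ and $\frac{1}{2\pi}(pV_q-qV_p)=\wt{\wt{p}q-p\wt{q}}$, with integrability handled exactly as you indicate by the lemma of Section \ref{sec:integrability}. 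The only (harmless) difference is bookkeeping: in the paper the symmetric part $2\pi^2\int\langle DF,DG\rangle_{L^2(\mu)}d\rho$ arises from symmetrizing the cross term $\int DG\,V_{DF}\,d\lambda\,d\mu$ in \eqref{eq:concise-id}, rather than from integrating the second-order $\sigma$-term of $\mc{L}G$ by parts against $F$ as you sketch.
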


First, we state and prove the integration by parts designed for the term $\int_{\mb{D}} (D_{\mu} f) h d\lambda$ but this time with an additional product. We recall that $V_p$ is defined in \eqref{eq:Vp-expression}.

\begin{Lem}[Integration by parts for $h D_{\mu} f$]
\label{Lem:ibp-muh} For $p = f^*$ where $f \in C_c^{\infty}(\mb{D})$ and bulk cylindrical test functionals $F = \varphi \left( \int_\mb{D} f_1 h d\lambda, \dots, \int_\mb{D} f_n h d\lambda \right)$ and $G = \psi \left( \int_\mb{D} g_1 h d\lambda, \dots, \int_\mb{D} g_m h d\lambda \right)$,   
\begin{align*}
0 = & \int \left[ \int_{\mb{D}} (D_{\mu} f) h d\lambda + 2\pi \xi \int_{\mb{U}} (p-\dashint p) d\mu + 2\xi 2\pi \int_{\mb{U}} \partial_n H p(w) d\mu(w) \right] \varphi \psi d\rho \\
& + \int 2\pi \left[ \sum_i \int p_i(w) V_p(w,w') d\lambda(w) d\mu(w') \varphi_i \psi  + \sum_{i'} \int q_{i'}(w) V_p(w,w') d\lambda(w) d\mu(w') \varphi \psi_{i'} \right] d\rho 
\end{align*}
\end{Lem}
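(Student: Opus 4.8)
The plan is to obtain the identity from the Cameron--Martin integration by parts for $\rho$ (with its zero-mode correction), applied to a single product functional, and then to specialize the resulting bilinear expression to the kernel $V_p$ using the product representation \eqref{eq:kernel-product-form} and the contraction identities of Section~\ref{sec:boundary-localization-kernelV}. Throughout I write $p_i=f_i^*$, $q_{i'}=g_{i'}^*$, and let $\varphi_i,\psi_{i'}$ denote the partial derivatives of $\varphi$ and $\psi$.

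First I would fix a direction $k\in C^\infty({\mb U})$ and a smooth function $\ell$ on ${\mb U}$ and consider the functional $\Phi(h)=\big(\int_{\mb U}\ell\,d\mu\big)F(h)G(h)$, a finite sum of products of a $\mu$-linear term with a bulk cylindrical functional. By the integrability lemmas of Section~\ref{sec:integrability} (using that at least one of the $f_i$ or $g_{i'}$ has non-vanishing mean) all the integrals below converge absolutely, so the integration by parts \eqref{prel:ibp}, extended by the zero-mode term, applies to $\Phi$. Using $D_k\int_{\mb U}\ell\,d\mu=-\xi\int_{\mb U}\ell k\,d\mu$ (since $\mu=e^{-\xi h}$), $D_k\int_{\mb D}f_i h\,d\lambda=\int_{\mb U}p_i k\,d\lambda$ (and similarly for $g_{i'}$), and $D\VV(h)=-\tfrac{1}{2\pi}\partial_n H h+c$ from \eqref{eq:V-exp-DV}, the product rule and $\int D_k\Phi\,d\rho=\int\Phi\,(D_k\VV)\,d\rho$ give
$$
0=\int\Big[\Big(\big(\tfrac{1}{2\pi}\!\int_{\mb U}\!k\,\partial_n H h\,d\lambda-c\!\int_{\mb U}\!k\,d\lambda\big)\!\int_{\mb U}\!\ell\,d\mu-\xi\!\int_{\mb U}\!\ell k\,d\mu\Big)\varphi\psi+\Big(\!\int_{\mb U}\!\ell\,d\mu\Big)\Big(\sum_i\big(\!\int_{\mb U}\!p_i k\,d\lambda\big)\varphi_i\psi+\sum_{i'}\big(\!\int_{\mb U}\!q_{i'}k\,d\lambda\big)\varphi\psi_{i'}\Big)\Big]\rho(dh).
$$

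Next I would specialize this identity, which is bilinear in $(k,\ell)$, to $V_p$. By Lemma~\ref{lem:kernel-V} we have $V_p(w,w')=\int k_u(w)\ell_u(w')\,d\Lambda(u)$, with finite-rank truncations $V_p^N\to V_p$ in every $C^r({\mb U}^2)$; taking $(k,\ell)=(k_u,\ell_u)$, integrating over $d\Lambda(u)$ on the truncations and passing to the limit, the four bilinear brackets turn into contractions of $V_p$. By \eqref{eq:singular-V} together with the symmetry $V_p(w,w')=V_p(w',w)$ one gets $\int_{{\mb U}^2}V_p(w,w')\partial_n H h(w)\,d\lambda(w)\,d\mu(w')=\int_{\mb D}(D_\mu f)h\,d\lambda$; the contraction lemma gives $\int_{\mb U}V_p(w,w')\,d\lambda(w)=2\pi(p(w')-\dashint p)$ and $V_p(w,w)=-2\partial_n H p(w)$; and the $\varphi_i\psi$, $\varphi\psi_{i'}$ terms yield $\int_{{\mb U}^2}p_i(w)V_p(w,w')\,d\lambda(w)\,d\mu(w')$ and $\int_{{\mb U}^2}q_{i'}(w)V_p(w,w')\,d\lambda(w)\,d\mu(w')$ directly. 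Multiplying the resulting identity by $2\pi$ and invoking the pure-gravity relation $2\pi c=-\xi$ (so that $-2\pi(2\pi c)=2\pi\xi$) produces exactly the asserted formula.

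The main obstacle is the rigorous justification of the two non-routine exchanges: that $\Phi$ genuinely lies in the domain of the Cameron--Martin integration by parts despite $\mu$ being a boundary GMC measure and $\rho$ being only $\sigma$-finite, and that one may interchange the $d\Lambda$-integration (the passage $V_p^N\to V_p$) with the $\rho$-integration. Both are controlled by the estimates of Section~\ref{sec:integrability}: the presence of an $f_i$ or $g_{i'}$ of non-zero mean makes the $\rho$-mass integrable against $|\mu|$, H\"older's inequality with $\E(|\mu_{h_0}|^p)<\infty$ for $p$ close to $1$ tames the chaos mass, and the Fernique tails of $h_0$ on compact subsets of ${\mb D}$ handle $\int_{\mb D}(D_\mu f)h\,d\lambda$ and $\int_{{\mb U}^2}V_p\,\partial_n H h$; the $C^r$-convergence of $V_p^N$ then makes the passage to the limit termwise and routine.
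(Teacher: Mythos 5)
Your argument is correct and follows essentially the paper's own proof: the Gaussian integration by parts with the zero-mode correction is applied to the product functional $\left(\int_{\mb U}\ell\,d\mu\right) F\,G$ in a direction $k$, and the resulting bilinear identity is then specialized to the kernel $V_p$ through its product representation $\int k_u\,\ell_u\,d\Lambda(u)$ and the contraction identities, with $2\pi c=-\xi$ yielding exactly the stated coefficients after multiplying by $2\pi$. Your explicit mention of the symmetry $V_p(w,w')=V_p(w',w)$, of the finite-rank truncations $V_p^N\to V_p$, and of the integrability bounds of Section \ref{sec:integrability} only makes explicit what the paper uses implicitly.
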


\begin{proof}
For
$$
K(h) = \left( \int_{\mb{U}} \ell d\mu \right)  \varphi \left( \int_\mb{D} f_1 h d\lambda, \dots, \int_\mb{D} f_n h d\lambda \right) \psi \left( \int_\mb{D} g_1 h d\lambda, \dots, \int_\mb{D} g_m h d\lambda \right)
$$
we have, with $p_i = f_i^*$ and $q_{i'}=g_{i'}^*$, and $k \in C^{\infty}(\mb{U})$
$$
D_k K(h) = \left( - \xi \int_\mb{U} \ell k d\mu \right) \varphi(\dots) \psi(\dots) + \sum_i \int p_i k d\lambda \left( \int_{\mb{U}} \ell d\mu \right) \varphi_i \psi + \sum_{i'} \int q_{i'} k d\lambda \left( \int_{\mb{U}} \ell d\mu \right) \varphi \psi_{i'}
$$ 
Recall the IBP formula \eqref{eq:IBP-potential} $-\frac{1}{2\pi} \int K \int_{\mb{U}} k (\partial_n H h + \xi) d\lambda  d\rho = \int D_k K d\rho$. Here, this gives
\begin{align*}
0 = & \int \left( \frac{1}{2\pi} \int_{\mb{U}} k (\partial_n H h) d\lambda \left( \int_{\mb{U}} \ell d\mu \right) - \xi \int_{\mb{U}} \ell (k - \dashint_{\mb{U}} k d\lambda) d\mu \right) \varphi \psi d\rho \\
& + \int\left( \sum_i \int_{\mb{U}} p_i k d\lambda \left( \int_{\mb{U}} \ell d\mu \right) \varphi_i \psi  + \sum_{i'} \int_{\mb{U}}q_{i'} k d\lambda \left( \int_{\mb{U}} \ell d\mu \right) \varphi \psi_{i'} \right) d\rho
\end{align*}

For a given $f$ or $p = f^*$, we use the decomposition $V_p(w,w') = \int k_u(w) \ell_u(w') d\Lambda(u)$ and integrate the previous equation over $d\Lambda(u)$,
\begin{align*}
0 = & \int \left[ \frac{1}{2\pi} \int_{\mb{U}^2} V_p(w,w') (\partial_n H h)(w) d\lambda(w) d\mu(w') + \frac{\xi}{2\pi} \int_{\mb{U}^2} V_p(w,w') d\lambda(w) d\mu(w') - \xi \int_{\mb{U}} V_p(w,w) d\mu(w) \right] \varphi \psi d\rho \\ 
& + \int \left[ \sum_i \int_{\mb{U}^2} p_i(w) V_p(w,w') d\lambda(w) d\mu(w') \varphi_i \psi + \sum_{i'} \int_{\mb{U}^2} q_{i'}(w) V_p(w,w') d\lambda(w) d\mu(w') \varphi \psi_{i'} \right] d\rho
\end{align*} 
For $p = f^*$, multiplying by $2\pi$ and using contractions gives
\begin{align*}
0 = & \int \left[ \int_{\mb{D}} (D_{\mu} f) h d\lambda + 2\pi \xi \int_{\mb{U}} (p-\dashint_{\mb{U}} p d\lambda) d\mu + 2\xi 2\pi \int_{\mb{U}} \partial_n H p(w) d\mu(w) \right] \varphi \psi d\rho \\
& + \int 2\pi \left[ \sum_i \int_{\mb{U}^2} p_i(w) V_p(w,w') d\lambda(w) d\mu(w') \varphi_i \psi  + \sum_{i'} \int_{\mb{U}^2} q_{i'}(w) V_p(w,w') d\lambda(w) d\mu(w') \varphi \psi_{i'} \right] d\rho 
\end{align*}
This proves the lemma.
\end{proof}

We need the following lemma which comes from a combination of rotational invariance and of the integration by parts formula.
\begin{Lem}
\label{Lem:rot-invariance}
For smooth $\ell$ and bulk cylindrical test function $F$, we have
$$
\int  \left( \int_{\mb{U}} \partial_\theta \ell d\mu \right) F(h) d\rho + 2\pi \xi \int \int_{\mb{U}} \ell  \widetilde{DF} d\mu d\rho = 0.
$$
\end{Lem}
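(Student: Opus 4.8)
The plan is to mimic the proof of Lemma~\ref{Lem:rotational-invariance-v1}, combining the rotational invariance of $\rho$ with the Gaussian integration by parts \eqref{eq:IBP-potential}. Write $F=\varphi(\int_{\mb D}f_1h\,d\lambda,\dots,\int_{\mb D}f_nh\,d\lambda)$ with $p_i=f_i^*$, so the $L^2(\lambda)$ gradient is $DF=\sum_i p_i\varphi_i$ and $\widetilde{DF}=\sum_i\widetilde{p_i}\varphi_i$. First I would apply rotational invariance to $K(h)=(\int_{\mb U}\ell\,d\mu)F(h)$: since $\rho$ is invariant under rotations $h\mapsto h(e^{i\theta}\cdot)$, the rotational derivative $RK$ integrates to zero against $\rho$. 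Differentiating the two factors of $K$ exactly as in Lemma~\ref{Lem:rotational-invariance-v1} — the rotation acting on $\mu=e^{-\xi h}$ yields $\int_{\mb U}\partial_t\ell\,d\mu$, and acting on the $i$-th slot of $F$ yields $\int_{\mb U}(\partial_t p_i)h\,d\lambda$, both with the same overall sign — gives $\int(\int_{\mb U}\partial_t\ell\,d\mu)F\,d\rho+\int(\int_{\mb U}\ell\,d\mu)(\int_{\mb U}h\,\partial_t(DF)\,d\lambda)\,d\rho=0$. Using the identity $\partial_t(DF)=\partial_nH\widetilde{DF}$ from the Dirichlet-to-Neumann section together with the self-adjointness of $\partial_nH$ on $L^2(\mb U)$, the inner integral rewrites as $\int_{\mb U}\widetilde{DF}\,\partial_nHh\,d\lambda=\sum_i\varphi_i\int_{\mb U}\widetilde{p_i}\,\partial_nHh\,d\lambda$.

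Second, I would integrate by parts to remove the factor $\partial_nHh$. Applying \eqref{eq:IBP-potential} to the functional $(\int_{\mb U}\ell\,d\mu)\varphi_i$ in the direction $k=\widetilde{p_i}$, and using that $\widetilde{p_i}$ has zero mean (the harmonic conjugate vanishes at the origin) so that the zero-mode term $c\int_{\mb U}k\,d\lambda$ drops out, gives
$$\int\Big(\int_{\mb U}\ell\,d\mu\Big)\varphi_i\Big(\int_{\mb U}\widetilde{p_i}\,\partial_nHh\,d\lambda\Big)d\rho=-2\pi\int D_{\widetilde{p_i}}\Big[\Big(\int_{\mb U}\ell\,d\mu\Big)\varphi_i\Big]\,d\rho.$$
The Fr\'echet derivative $D_{\widetilde{p_i}}[(\int_{\mb U}\ell\,d\mu)\varphi_i]$ equals $-\xi(\int_{\mb U}\ell\,\widetilde{p_i}\,d\mu)\varphi_i+(\int_{\mb U}\ell\,d\mu)\sum_j(\int_{\mb U}p_j\widetilde{p_i}\,d\lambda)\varphi_{ij}$, and summing over $i$ the double sum vanishes because $\varphi_{ij}$ is symmetric in $(i,j)$ while $\int_{\mb U}p_j\widetilde{p_i}\,d\lambda=-\int_{\mb U}p_i\widetilde{p_j}\,d\lambda$ by the antisymmetry of harmonic conjugation. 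This leaves $\int(\int_{\mb U}\ell\,d\mu)(\int_{\mb U}\widetilde{DF}\,\partial_nHh\,d\lambda)\,d\rho=2\pi\xi\int(\int_{\mb U}\ell\,\widetilde{DF}\,d\mu)\,d\rho$, and substituting this into the identity from the first step gives precisely the asserted formula.

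I expect the main obstacle to be not the algebra but the justification that all these manipulations are legitimate for the $\sigma$-finite measure $\rho$: one must know that the relevant integrands (involving $|\mu|$ and linear functionals of $\partial_nHh$ against the $\varphi$'s) are absolutely $\rho$-integrable, which is what allows exchanging $\tfrac{d}{dt}$ with $\int$ in the rotational-invariance step and applying the integration by parts in the second step. This is supplied by the integrability results of Section~\ref{sec:integrability}, and it is precisely here that the standing assumption that at least one $f_i$ has non-vanishing mean is used. Beyond integrability, the two facts that make the identity close up cleanly — that $\widetilde{p_i}$ has zero mean, so the $c$-term disappears, and that the $\varphi_{ij}$-contraction against $\int_{\mb U}p_j\widetilde{p_i}\,d\lambda$ cancels by antisymmetry — are the points worth isolating in the write-up.
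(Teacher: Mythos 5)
Your argument is correct and is essentially the paper's own proof: the paper disposes of this lemma by observing it is equivalent to \eqref{eq:lemma-record}, which was obtained inside the proof of Lemma \ref{Lem:rotational-invariance-v1} by exactly your combination of rotational invariance of $\rho$, the integration by parts \eqref{eq:IBP-potential} in the direction $k=\tilde p_i$, the vanishing mean of the harmonic conjugate, and the antisymmetry cancellation of the $\varphi_{ij}$-terms. You merely re-derive that identity for the functional $\left(\int_{\mb U}\ell\,d\mu\right)F$ instead of citing it, with the signs and the identity $\partial_t p_i=\partial_nH\tilde p_i$ handled correctly.
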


\begin{proof}
This is equivalent to \eqref{eq:lemma-record}.
\end{proof}

\begin{proof}[Proof of \eqref{eq:Dirichlet-Form}]
By applying Lemma \ref{Lem:ibp-muh} with $(p_i,\varphi_i)$ and summing over $i$ we get
\begin{align}
0 =  & \int \sum_i  \left[ \int_{\mb{D}} (D_{\mu} f_i) h d\lambda + 2\pi \xi \int_{\mb{U}} (p_i - \dashint p_i) d\mu + 2\xi 2\pi \int_{\mb{U}} \partial_n H p_i d\mu \right] \varphi_i \psi d\rho  \\
& + 2\pi \int \sum_{i,j} p_j(w) V_{p_i}(w,w') d\lambda(w) d\mu(w') \varphi_{i,j} \psi d\rho + 2\pi \int \sum_{i,j'} \int q_{j'}(w) V_{p_i}(w,w') d\lambda(w) d\mu(w') \varphi_i \psi_{j'} d\rho  \nonumber
\end{align}
We focus on the fourth term. Using contractions, we get
\begin{equation}
2\pi \sum_{i,j} \int_{\mb{U}^2} p_j(w) V_{p_i}(w,w') d\lambda(w) d\mu(w') \varphi_{i,j} \psi = 2\pi^2 \sum_{i,j} \int_{\mb{U}} \left( p_i p_j + \tilde{p}_i \tilde{p}_j - \dashint p_i \dashint p_j \right) d\mu  \varphi_{i,j} \psi
\end{equation}
We apply Lemma \ref{Lem:rot-invariance} with $(\ell,F) = (\tilde{p}_j,\varphi_j \psi)$, use $\partial_\theta \tilde{p}_j = - \partial_n H p_j$ and sum over $j$'s so that
$$
- \sum_j \left( \int \partial_n H p_j d\mu  \right) \varphi_j \psi d\rho + 2\pi \xi \int \left( \sum_{i,j} \left( \int \tilde{p}_i \tilde{p}_j d\mu \right) \varphi_{i,j} \psi + \sum_{i',j} \left( \int \tilde{p}_j \tilde{q}_{i'} d\mu \right) \varphi_j \psi_{i'} \right) d\rho
$$
Furthermore, an integration by parts on the zero modes gives
$$
2\pi^2 \int \sum_{i,j} \dashint p_i \dashint p_j | \mu| \varphi_{i,j} \psi d\rho + 2\pi^2 \int \sum_{i,j'} \dashint p_i \dashint q_{j'} | \mu | \varphi_i \psi_{j'} d\rho =0 
$$
Combining this with the numbered equations, we get
\begin{align*}
& 2\pi \int \sum_{i,j} \int_{\mb{U}^2} p_j(w) V_{p_i}(w,w') d\lambda(w) d\mu(w') \varphi_{i,j} \psi d\rho =  2\pi^2 \int \sum_{i,j} \int_{\mb{U}} \left( p_i p_j + \tilde{p}_i \tilde{p}_j  - \dashint p_i \dashint p_j \right) d\mu  \varphi_{i,j} \psi d\rho \\
= & 2\pi^2 \int \sum_{i,j} \left( \int_{\mb{U}}  p_i p_j  d\mu \right)   \varphi_{i,j} \psi d\rho + \frac{\pi}{\xi} \int \sum_j \left( \int \partial_n H p_j d\mu  \right) \varphi_j \psi d\rho - 2\pi^2 \int  \sum_{i',j} \left( \int \tilde{p}_j \tilde{q}_{i'} - \dashint p_j \dashint q_{i'} d\mu \right) \varphi_j \psi_{i'}  d\rho 
\end{align*}
So altogether, we get
\begin{align}
\label{eq:IBP-simplification}
0 =  & \int \sum_i  \left[ \int_{\mb{D}} (D_{\mu} f_i) h d\lambda + 2\pi \xi \int_{\mb{U}} (p_i - \dashint p_i) d\mu + 2\pi (2\xi  + \frac{1}{2\xi}) \int_{\mb{U}} \partial_n H p_i d\mu \right] \varphi_i \psi d\rho  \nonumber \\
& + 2\pi \int \sum_{i,j'} \int q_{j'}(w) V_{p_i}(w,w') d\lambda(w) d\mu(w') \varphi_i \psi_{j'} d\rho \\
& + 2\pi^2 \int \sum_{i,j} \left( \int_{\mb{U}}  p_i p_j  d\mu \right)   \varphi_{i,j} \psi d\rho - 2\pi^2 \int  \sum_{i',j} \left( \int \tilde{p}_j \tilde{q}_{i'} - \dashint p_j \dashint q_{i'} d\mu \right) \varphi_j \psi_{i'}  d\rho  \nonumber
\end{align}
namely, recalling $b_i$ from \eqref{eq:generator-pure-gravity},
\begin{align*}
\int \sum_i b_i \varphi_i \psi d\rho = & - 2\pi^2 \int \sum_{i,j} \left( \int_{\mb{U}}  p_i p_j  d\mu \right)   \varphi_{i,j} \psi d\rho \\
& - 2\pi \int \sum_{i,j'} \int q_{j'}(w) V_{p_i}(w,w') d\lambda(w) d\mu(w') \varphi_i \psi_{j'} d\rho \\
& + 2\pi^2 \int  \sum_{i',j} \left( \int \tilde{p}_j \tilde{q}_{i'} - \dashint p_j \dashint q_{i'} d\mu \right) \varphi_j \psi_{i'}  d\rho + 2\pi \xi \sum_i \int \dashint p_i |\mu| \varphi_i \psi d\rho 
\end{align*}
so that, recalling $\mc{L} F$ from \eqref{eq:generator-pure-gravity},
\begin{align*}
- \int G \mc{L} F d\rho   = &  2\pi \int \sum_{i,j'} \int q_{j'}(w) V_{p_i}(w,w') d\lambda(w) d\mu(w') \varphi_i \psi_{j'} d\rho \\
& - 2\pi^2 \int  \sum_{i',j} \left( \int \tilde{p}_j \tilde{q}_{i'} - \dashint p_j \dashint q_{i'} d\mu \right) \varphi_j \psi_{i'}  d\rho - 2\pi \xi \sum_i \int \dashint p_i |\mu| \varphi_i \psi d\rho  
\end{align*}
or, more concisely,
\begin{equation}
\label{eq:concise-id}
\begin{split}
- \int G \mc{L} F d\rho = & 2\pi \int DG(w) V_{DF}(w,w') d\lambda(w) d\mu(w') d\rho \\
&- 2\pi^2 \int \left( \widetilde{DF} \widetilde{DG} -\dashint DF \dashint DG  \right) d\mu d\rho - 2\pi \xi \int \left( \dashint DF \right) G  |\mu| d\rho
\end{split}
\end{equation}
and, using $qV_p + pV_q = 2\pi (pq + \tilde{p} \tilde{q} - \dashint p \dashint q)$ and an integration by parts of the zero modes, we find that the symmetric part $\tilde{\mc{E}}(F,G)$ is given by
\begin{equation}
\label{eq:SymmetricPart}
\frac{1}{2} \int  F (-\mc{L} G ) + G (- \mc{L} F) d\rho  = 2\pi^2 \int \langle DF, DG \rangle_{L^2(\mu)} d \rho
\end{equation}
and that the antisymmetric part $\check{\mc{E}}(F,G)$ is given by
\begin{equation}
\label{eq:AntisymmetricPart}
\begin{split}
\frac{1}{2} \int  F (-\mc{L} G ) - G (- \mc{L} F) d\rho   = &  \pi \int \left( DF(w) V_{DG}(w,w') - DG(w) V_{DF}(w,w') \right) d\lambda(w) d\mu(w') d\rho \\
& + \pi \xi \int \left( (\dashint DF ) G -    (\dashint DG ) F \right)  |\mu| d\rho.
\end{split}
\end{equation}
Finally, using $\frac{1}{2\pi} (p V_q - q V_p) = \wt{(\wt{p} q - p \wt{q})}$,
\begin{align*}
 \int \left( DF(w) V_{DG}(w,w') - DG(w) V_{DF}(w,w') \right) d\lambda(w) d\mu(w') d\rho =   2\pi \int  \int \left( \widetilde{ \widetilde{DF}DG} - \widetilde{ DF\widetilde{DG}}   \right)  d\mu d\rho
\end{align*}
so that
$$
\check{\mc{E}}(F,G) =  2\pi^2 \int  \int \left( \widetilde{ \widetilde{DF}DG}   -  \widetilde{DF\widetilde{DG}}   \right)  d\mu d\rho + \pi\xi \int   \left( \dashint_{\mb{U}} D F  \right) G - \left( \dashint_{\mb{U}} D G  \right) F | \mu|  d \rho 
$$
and this concludes the proof of \eqref{eq:Dirichlet-Form}.
\end{proof}

\paragraph{Remark.} When using the integration by parts developed here but for a potential $\mc{V}_1(h)  =\mc{V}(h)+\mc{R}(h)$ instead of $\mc{V}(h)$, this brings the additional term
$$
\int F \int \frac{2}{2\pi} D \mc{R} V_{DG} - \wt{DR} \wt{DG} d\mu d\rho
$$
Namely, if we denote by $\rho_{c,\mc{R}}$ the associated measure, we have
\begin{align*}
\int  F(h) \mc{L}_{\alpha,\chi,\beta} G(h) \rho_{c, \mc{R}}(dh) = & \int F \left( -2\pi (\chi+2\xi+\frac{1}{2\xi}) \int_{\mb{U}} \partial_n H DG d\mu + 2\pi (\chi-\alpha+2\pi c) \int_{\mb{U}} DG d\mu - \beta \int_{\mb{U}} DG d\lambda \right)  d\rho_{c,\mc{R}} \\
& + 2\pi^2 \int \left(- \frac{2}{2\pi} DF V_{DG} + \wt{DF} \wt{DG}  - \dashint DF \dashint DG \right) d\mu d\rho_{c,\mc{R}} + \pi (\xi-2\pi c) \int F \dashint DG  |\mu| d\rho_{c,\mc{R}}  \\
& + 2\pi^2 \int F \int_{\mb{U}} \frac{2}{2\pi} D \mc{R} V_{DG} - \wt{D \mc{R}} \wt{DG} d\mu d\rho_{c,\mc{R}}.
\end{align*}
Note also that
$$
\frac{2}{2\pi} D\mc{R} V_{DG}  = \frac{D\mc{R} V_{DG}  + DG V_{D \mc{R}} }{2\pi}   + \frac{D\mc{R} V_{DG}  - DG V_{D \mc{R}}}{2\pi} = DG D\mc{R} + \wt{DG} \wt{D \mc{R}}- \dashint DG \dashint D\mc{R} + \frac{D\mc{R} V_{DG}  - DG V_{D \mc{R}}}{2\pi} 
$$
so, recalling $A$ from \eqref{def:operator-A},
$$
\frac{2}{2\pi} D\mc{R} V_{DG}  - \wt{DG} \wt{D \mc{R}} = DG D\mc{R}  - \dashint DG \dashint D\mc{R} + A(D\mc{R}, DG).
$$
With $F =1$, in order to find an invariant measure by using a similar identification as the one of the previous section, we would like to express this extra term with  $DG d\mu$ and $\partial_n H DG d\mu$.  

\subsection{Proof using a formal integration by parts}

\label{sec:dirichlet-other-proof}

The motivation of the proof presented above is to start from the Gaussian integration by parts characterizing the boundary field to obtain the invariance condition. This required boundary localization of the different terms appearing in the generator and the integration by parts associated with the term $h D_{\mu} f$ was nontrivial. Below, we use formal computations to explain in another way how this integration by parts works. A non-trivial observation here is that certain terms can be written in divergence form.

Recall the general integration by parts formula associated with a measure on fields $\rho = e^{-V} \mc{D} h$ where $\mc{D} h$ is formally the Lebesgue measure (which does not exist).
\begin{equation}
\label{eq:divergence-formula}
\int \langle  DF, B \rangle_{L^2(\lambda)} d\rho = - \int F \Div (e^{-V} B) \mc{D} h = - \int F \left[ \Div B - \langle B, DV \rangle_{L^2(\lambda)} \right] d\rho
\end{equation}
The symmetry $V_q(w,w') = V_q(w',w)$ gives the identity $\la p V_q, r \ra = \la r V_q, p \ra$. This and \eqref{eq:divergence-formula} give
\begin{align}
\int \la DF V_{DG}, e^{-\xi h} \ra d\rho & = \int \la DG, e^{-\xi h} V_{DG} \ra d\rho  \nonumber\\
& =  - \int F \left[ \Div (e^{-\xi h} V_{DG}) - \la e^{-\xi h} V_{DG}, D \mc{V} \ra \right] d\rho \nonumber \\
& = - \int F \left[ \Div (e^{-\xi h} V_{DG}) - \la D\mc{V} V_{DG}, e^{-\xi h} \ra \right] d\rho  
\end{align}

The divergence term is given by
\begin{align*}
\Div (e^{-\xi h} V_{DG})  &  = \sum_{j \geq 0, k \geq 0} \partial_{j,k} \psi \int e_k V_{e_j} e^{-\xi h} d\lambda - \xi \sum_{k \geq 0} \int V_{DG}(w,w') e_k(w) e_k(w') e^{-\xi h(w)} \lambda(dw) \lambda(dw') \\
& = \frac{1}{2} \sum_{j \geq 0, k \geq 0} \partial_{j,k} \psi \int  (e_k V_{e_j} + e_j V_{e_k}) e^{-\xi h} d\lambda + 2\xi \int \partial_n H DG d\mu \\
& = \pi \sum_{j \geq 0, k \geq 0} \partial_{j,k} \psi (\int e_j e_k + \wt{e}_j \wt{e}_k - \dashint e_j \dashint e_k) d\mu+ 2\xi \int \partial_n H DG d\mu 
\end{align*}
since $\sum_{k \geq 0} e_k(w) e_k(w') = \delta_{w}(w')$. 

The term with $\wt{e}_j \wt{e}_k$  can be interpreted as the following divergence
\begin{align*}
\Div \left( \wt{\wt{DG} e^{-\xi h}} \right) & =  - \sum_{k \geq 1} D_{e_k}  \int \wt{DG} e^{-\xi h} \wt{e}_k d\lambda \\
& = - \sum_{k  \geq 1,  j \geq 1} \partial_{j,k} \psi \int \wt{e}_j \wt{e}_k e^{-\xi h} d\lambda + \xi \sum_{k \geq 1} \int \wt{DG} e_k \wt{e}_k e^{-\xi h} d\lambda  \\
& = - \sum_{k  \geq 1,  j \geq 1} \partial_{j,k} \psi \int \wt{e}_j \wt{e}_k e^{-\xi h} d\lambda 
\end{align*}
An integration by parts using this divergence gives
$$
\int F \Div \left( \wt{\wt{DG} e^{-\xi h}} \right)  d\rho = - \int \la DF, \wt{\wt{DG} e^{-\xi h}}  \ra d\rho +   \int F \la  \wt{\wt{DG} e^{-\xi h}}  ,  D\mc{V} \ra  d\rho   =  \int \la\wt{DF}, \wt{DG} e^{-\xi h}\ra -  F \la \wt{DG}, \wt{D \mc{V}}  e^{-\xi h}\ra d\rho
$$
so
$$
-  \pi \int F \sum_{j,k} \partial_{j,k}  \psi  \int \wt{e}_j \wt{e}_k d\mu  d\rho = \pi \int F \Div \left( \wt{\wt{DG} e^{-\xi h}} \right) d\rho = \pi \int \la\wt{DF}, \wt{DG} e^{-\xi h}\ra -  F \la \wt{DG}, \wt{D \mc{V}}  e^{-\xi h}\ra d\rho
$$

Altogether, by multiplying by $2\pi$ we get
\begin{align*}
& \int F \left[ - 2\pi^2 \sum_{j,k \geq 0} \partial_{j,k} \psi \int (e_j e_k - \dashint e_j \dashint e_k) d\mu + 2\pi \la D\mc{V} V_{DG}, e^{-\xi h} \ra  - 2\pi 2\xi \int \partial_n H DG d\mu  -2\pi^2 \la \wt{DG}, \wt{D \mc{V}} e^{-\xi h} \ra  \right] d\rho \\
& =  2\pi^2 \frac{2}{2\pi} \int \la DF V_{DG}, e^{-\xi h} \ra d\rho -  2\pi^2 \int \la \wt{DF}, \wt{DG} e^{-\xi h} \ra d\rho
\end{align*}

Now, we specify $D\mc{V} = -\frac{1}{2\pi} (\partial_n H h + \xi) $.  Then,
$$
- 2\pi \la D \mc{V} V_{DG}, e^{-\xi h} \ra = \la \partial_n H h V_{DG}, e^{-\xi h}  \ra + \xi \la  1V_{DG}, e^{-\xi h} \ra = \la \partial_n H h V_{DG}, e^{-\xi h}  \ra + 2\pi \xi  (DG - \dashint DG)
$$

Using $\wt{\partial_n H p} = \partial_\theta p$ and $\partial_\theta \wt{p} = - \partial_n H p$, 
$$
2\pi^2 \la \wt{DG}, \wt{D \mc{V}} e^{-\xi h} \ra = -\pi \la \wt{DG}, \partial_\theta h e^{-\xi h} \ra = \pi \xi^{-1} \la \wt{DG}, \partial_\theta ( e^{-\xi h}) \ra  = - \pi \xi^{-1} \la \partial_\theta \wt{DG}, e^{-\xi h} \ra = \pi \xi^{-1} \la \partial_n H DG , e^{-\xi h} \ra
$$

In this case, since $2\pi^2 \int F \sum_{j,k \geq 0} \partial_{j,k} \psi |\mu| d\rho = 2\pi^2 \int F \partial_m^2 G |\mu| d\rho =  - 2\pi^2 \int \partial_m F \partial_m G  |\mu| d\rho$, the equation becomes
\begin{align*}
& -\int F \left[ 2\pi^2 \sum_{j,k \geq 0} \partial_{j,k} \psi \int e_j e_k d\mu  + \la \partial_n H h V_{DG}, e^{-\xi h}  \ra + 2\pi \left(  (2\xi )^{-1} + 2\xi \right) \int \partial_n H DG d\mu    + 2\pi \xi  \int DG d\mu  \right] d\rho \\
 = & 2\pi^2 \frac{2}{2\pi} \int \la DF V_{DG}, e^{-\xi h} \ra d\rho -  2\pi^2 \int \la \wt{DF}, \wt{DG} e^{-\xi h} \ra d\rho + 2\pi^2 \int \dashint DF \dashint DG |\mu| d\rho  - 2\pi \xi \int F \dashint DG |\mu | d\rho
\end{align*}
which is equivalent to \eqref{eq:concise-id} by recalling \eqref{eq:generator-pure-gravity}. In particular, we readily retrieve $\int F (- \mc{L} G) d\rho = \mc{E}(F,G)$.

\subsection{Symmetric part: formal dynamics and weak solution}

\label{sec:symmetric-part}

In this section, we study the symmetric part of the Dirichlet form, which is given by 
\begin{equation}
\label{eq:original-symmetric}
\tilde{\mc{E}}(F,G) = 2\pi^2   \int \langle DF, DG \rangle_{L^2(\mu)} d\rho 
\end{equation}
where $\mu = e^{-\xi h}$ and $\rho(dh)$ is given in \eqref{eq:invariant-measure}. When $\xi = 0$, the associated dynamics are simple and given by 
\begin{equation}
\label{eq:conc-circles}
\dot{h}_t = \pi \partial_n H h_t  dt + 2\pi W = -\pi (-\Delta_{\mb{U}})^{1/2} h_t + 2\pi W
\end{equation}
where $W$ is a space-time white noise. This is simply an infinite dimensional Ornstein-Uhlenbeck process and it appears naturally in different contexts such as in the fluctuations of Hastings-Levitov planar growth \cite{Silvestri} or with the characteristic polynomial process  associated with the Dyson Brownian motion on the circle \cite{BF22}. Also, it has an important role in the proof of the conformal bootstrap of Liouville theory \cite{Bootstrap}. However, for any $\xi >0$, it is not immediate to make sense of associated dynamics. One can be tempted to use an integration by parts formula to find the symmetric part of the generator. This gives formally, for $G = \varphi( \int_{\mb{U}} h q_1 d\lambda, \dots \int_{\mb{U}} h q_m d\lambda )$,
\begin{equation}
\label{eq:formal-ibp-sym}
\begin{split}
  \int \langle DF, DG \rangle_{L^2(\mu)} d\rho  = &  \int F \left[ - \sum_{j,j'} \partial_{j,j'} \psi \int_{\mb{U}} q_{j'} q_j d\mu - \frac{\xi}{2\pi}  \int_{\mb{U}} DG d\mu  \right] d\rho \\
& -  \frac{1}{2\pi} \int F \int_{\mb{U}} DG ``\partial_n H h d\mu" d\rho 
\end{split}
\end{equation}
so (formally), the symmetric generator is given by
\begin{equation}
\label{eq:formal-sym-gen}
\mc{L} G  = \pi\int_{\mb{U}} DG ``\partial_n H h  d\mu"  + \pi \xi  \int_{\mb{U}} DG d\mu   + 2\pi^2 \sum_{j,j'} \partial_{j,j'} \psi \int_{\mb{U}} q_{j'} q_j d\mu 
\end{equation}
The reason for the ``$\partial_n H h d\mu$"  is that it is not clear how to make sense of this term (contrary to $\partial_\theta h d\mu$ which can be interpreted as a tangential derivative of the boundary measure). Below, we discuss this formal integration by parts and why this term doesn't make readily sense. But first, we instead use a (formal) change of variable in the associated (formal) dynamics to construct a weak solution associated with this symmetric Dirichlet form.

\subsubsection{Change of variable and existence of a diffusion process}

\paragraph{Formal dynamics.} An associated dynamic is given (formally), for a space-time white noise $W$, by
$$
\dot{h}_t(w) = \pi (\partial_n H h_t  + \xi ) e^{- \xi h_t} +2\pi e^{-\frac{1}{2} \xi h_t} W(dw,dt)
$$
Multiplying this SPDE by $\xi e^{\xi h_t}$, we get (formally)
\begin{equation}
\label{eq:symmetric-SPDE}
\frac{d}{dt} e^{\xi h_t}  =  \pi \xi (\partial_n H h_t  + \xi ) +2\pi \xi e^{\frac{1}{2} \xi h_t} W(dw,dt)
\end{equation}
Consider the $\xi$-GMC associated with the field $h$ denoted by $\mu_{\xi} := e^{\xi h}$. The generator associated with this equation and applied to $F = \varphi (\int_{\mb{U}} p_1 d\mu_{\xi}, \dots, \int_{\mb{U}} p_n d\mu_{\xi})$ is given by
\begin{align}
\label{eq:symmetric-generator}
\mc{L}^S F & := \sum_{i} \partial_i \varphi (\dots)   \pi \xi  \langle  \partial_n H h + \xi , p_i \rangle_{L^2(\lambda)}  + 2 \pi^2 \xi^2 \sum_{i,j} \partial_{i,j} \varphi (\dots) \int_{\mb{U}} p_i p_j d\mu_{\xi}  \\
& =  - 2 \pi^2 \xi \sum_{i} \partial_i \varphi (\dots)  D_{p_i} \mc{V}(h) + 2 \pi^2 \xi^2 \sum_{i,j} \partial_{i,j} \varphi (\dots) \int_{\mb{U}} p_i p_j d\mu_{\xi} \nonumber 
\end{align}  
where we used \eqref{eq:V-exp-DV} in the second equality. Now, this expression is well defined as soon as the $p_i$'s are smooth enough. By integration by parts, in particular by using  $D_{p_i} \mc{V}(h)$ above, we have
\begin{align*}
\int G \partial_i \varphi(\dots) D_{p_i} \mc{V}(h) d\rho & =  \int D_{p_i} (G \partial_i \varphi) d\rho \\
&  = \int \left( \xi  \sum_j \partial_j \psi (\dots)   \partial_i \varphi  (\dots)  \int_{\mb{U}} p_i q_j d \mu_{\xi} + \xi \sum_j G \partial_{i,j} \varphi (\dots) \int_{\mb{U}} p_i p_j d\mu_{\xi} \right) d\rho.
\end{align*}
So, by multiplying this equality by $2\pi^2 \xi$, we obtain an integration by parts formula for the symmetric part of the Dirichlet form,
\begin{equation}
\label{eq:ibp-symmetric}
\mc{E}(F,G) := \int G (-\mc{L}^S F) d\rho = 2 \pi^2  \int \langle \mc{D}F, \mc{D}G \rangle_{L^2(\mu_{\xi})} d\rho,
\end{equation}
where $\mc{D}F$ denotes the $L^2(\mu_\xi)$-gradient of $F$.

This corresponds to a (formal) change of variable in the original symmetric Dirichlet form \eqref{eq:original-symmetric}: for the test functional $F = \varphi (\int_{\mb{U}} e^{\xi h} p_1 d\lambda, \dots, \int_{\mb{U}} e^{\xi h} p_n d\lambda)$,  $DF = \sum_i \partial_i \varphi(\dots)  \xi e^{\xi h} p_i$ so, with the $\xi$-GMC $\mu_\xi = e^{\xi h}$, $\langle DF, DG \rangle_{L^2(\mu)} =  \xi^2 \sum_{i,j}  \partial_i \varphi (\dots) \partial_j \psi (\dots) \int_{\mb{U}} p_i q_j d\mu_{\xi} = \langle \mc{D}F, \mc{D}G \rangle_{L^2(\mu_{\xi})}$.

\paragraph{Weak solution of \eqref{eq:symmetric-SPDE}.} We construct, using the formalism of Dirichlet forms, an associated stochastic process and argue by using Fukushima decomposition that the process is a weak solution of \eqref{eq:symmetric-SPDE}. Though this process naturally appears here for $\xi = 1/\sqrt{6}$, we can make sense of it for every $\xi$ in the subcritical regime of GMC measures, namely for $\xi \in (0,1)$. 

Using the push-forward given by the GMC map $h \mapsto e^{\xi h}$, the  $\sigma$-finite measure $\rho(dh)$ on fields induces a $\sigma$-finite measure on the space of Borel measures on  $\mb{U}$,  $X := \mc{M}(\mb{U})$, which we denote by $\m$. With this notation, the Dirichlet form we consider is given by the right-hand side of \eqref{eq:ibp-symmetric}, namely, for $F, G \in \mc{C}_S$ where $\mc{C}_S := \{ F = \varphi (|\mu_\xi|, \int p_1 d\mu_{\xi}, \dots, \int p_n d\mu_{\xi}) \}$, with $\varphi$ with compact support in $(0,\infty) \times \mb{R}^n$,
\begin{equation}
\label{def:symmetric-dirichlet-form}
\mc{E}(F,G) := 2 \pi^2 \int \langle \mc{D}F, \mc{D}G \rangle_{L^2(\mu_{\xi})} d\m
\end{equation}
where $\langle \mc{D}F, \mc{D}G \rangle_{L^2(\mu_{\xi})}  =  \xi^2 \sum_{i,j}  \partial_i \varphi (\dots) \partial_j \psi (\dots) \int_{\mb{U}} p_i q_j d\mu_{\xi}$. The integrability is justified by Lemma \ref{lem:integrability-sym}.

 $\mc{M}(\mb{U})$ is locally compact for the topology of weak convergence (which is metrizable).  We add a cemetery point $\Delta$ to the state space seen as the point at infinity and set $X_{\Delta} = X \cup \{ \Delta \}$.

We need a variant of the inverse mapping from \cite{BSS14} which allows to retrieve the free field from its associated LQG measure in the bulk. To be more precise, we need a one dimensional version of this result for which we retrieve $h$ from $e^{\xi h}$. Such an inverse mapping exists and the proof of \cite{BSS14} can be adapted (see Section \ref{sec:inverse}). We denote it by $I_{\xi}$.

\begin{Prop}
\label{prop:weak-solution}
For every $\xi \in (0,1)$, there exists an $\m$-symmetric diffusion  $( (\mu_{t}^{\xi})_{t\geq 0}, (P_{\mu_0})_{\mu_0 \in \mc{M}(\mb{U})})$ on $\mc{M}(\mb{U})$ such that for any smooth function $p$ and $\m$-every $\mu_0 \in \mc{M}(\mb{U})$, under $P_{\mu_0}$, $\mu_0^{\xi} = \mu_0$ and
\begin{equation}
\label{eq:1d-projections}
d \int_{\mb{U}} p(w) \mu_t^{\xi}(dw) = \pi \xi \int_{\mb{U}} p (\partial_n H h_t  + \xi ) d\lambda dt +2\pi \xi \left( \int_{\mb{U}} p(w)^2 \mu_t^{\xi}(dw) \right)^{1/2} d\beta_t^p
\end{equation}
where, for $t>0$ a.s. $h_t = I_{\xi} (\mu_t^{\xi})$ and $(\beta_t^p)_{t \geq 0}$ is a standard Brownian motion.
\end{Prop}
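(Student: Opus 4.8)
The plan is to realize $(\mu_t^\xi)$ as the $\m$-symmetric Hunt process properly associated with the symmetric Dirichlet form $(\mc{E},\mc{C}_S)$ of \eqref{def:symmetric-dirichlet-form} on $L^2(X,\m)$, and then to extract the system \eqref{eq:1d-projections} from the Fukushima decomposition of the coordinate functionals $F_p(\mu):=\int_{\mb{U}}p\,d\mu$, $p$ smooth (which is the weak/tested form of \eqref{eq:symmetric-SPDE}).

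\textbf{Step 1: closability and the Dirichlet form.} First I would record that $X=\mc{M}(\mb{U})$ with the weak topology is locally compact, separable and metrizable, that $\mc{M}(\mb{U})=\bigcup_n\{1/n\le|\mu|\le n\}\cup\{0\}$ is $\sigma$-compact, and that $\m$ is a Radon, $\sigma$-finite measure since it is finite on each $\{\delta\le|\mu_\xi|\le\delta^{-1}\}$ by Lemma \ref{lem:integrability-sym}. Density of $\mc{C}_S$ in $L^2(X,\m)$ follows because functionals of $|\mu_\xi|$ and of $\int p\,d\mu_\xi$ separate points of $\mc{M}(\mb{U})$, and the Markov (unit contraction) property of $\mc{E}$ is immediate from its gradient form. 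The delicate point is \emph{closability} of $(\mc{E},\mc{C}_S)$; this reduces, coordinate direction by coordinate direction, to the integration by parts identity \eqref{eq:ibp-symmetric}, i.e. to the logarithmic-gradient representation $\int (D_pK)\,d\rho=-\int K\,(D_p\mc{V})\,d\rho$ together with the local integrability $D_p\mc{V}=-\tfrac1{2\pi}(\partial_nHh+\xi)\in L^{1+\eta}$ against $\m$ on the sets $\{\delta\le|\mu_\xi|\le\delta^{-1}\}$, which is again exactly Lemma \ref{lem:integrability-sym}. Hence $(\mc{E},\mc{C}_S)$ is closable; denote by $(\mc{E},\mc{F})$ its closure, a symmetric Dirichlet form on $L^2(X,\m)$, which is local because $\langle\mc{D}F,\mc{D}G\rangle_{L^2(\mu_\xi)}=0$ wherever $F$ and $G$ are locally constant.

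\textbf{Step 2: regularity and the associated diffusion.} Next I would check that $(\mc{E},\mc{F})$ is regular, i.e. that $\mc{C}_S\cap C_0(X)$ is $\mc{E}_1$-dense in $\mc{F}$ and uniformly dense in $C_0(X)$; the point separation needed here is where the inverse map $I_\xi$ of Section \ref{sec:inverse} is convenient, as it recovers $h$, hence every bulk functional, measurably from $\mu_\xi$. By the Fukushima--Oshima--Takeda theory there is then an $\m$-symmetric Hunt process $((\mu_t^\xi)_{t\ge0},(P_{\mu_0})_{\mu_0})$ on $X_\Delta$ properly associated with $(\mc{E},\mc{F})$, and locality of the form makes it a diffusion (continuous paths up to the lifetime). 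Since $\mathrm{Cap}(A)\ge\m(A)$ for any $A$, every $\mc{E}$-polar set is $\m$-null, so $P_{\mu_0}$ is well defined for $\m$-a.e. $\mu_0$ and $\mu_0^\xi=\mu_0$ holds there.

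\textbf{Step 3: Fukushima decomposition and identification of the SPDE.} Fix $p$ smooth. The coordinate functional $F_p$ is not in $L^2(\m)$ (the zero-mode direction carries infinite mass, which is why $\mc{C}_S$ uses $\varphi$ with support bounded away from $|\mu_\xi|=0,\infty$), but $F_p\in\mc{F}_{\mathrm{loc}}$, and by \eqref{eq:symmetric-generator} $\mc{L}^SF_p=\pi\xi\langle\partial_nHh+\xi,p\rangle_{L^2(\lambda)}$ is locally in $L^2(\m)$ by Lemma \ref{lem:integrability-sym}. Applying the Fukushima decomposition for $\mc{F}_{\mathrm{loc}}$ functionals gives, under $P_{\mu_0}$ for $\m$-a.e. $\mu_0$,
\begin{equation*}
F_p(\mu_t^\xi)=F_p(\mu_0^\xi)+M_t^{[p]}+N_t^{[p]},\qquad N_t^{[p]}=\pi\xi\int_0^t\int_{\mb{U}}p\,(\partial_nHh_s+\xi)\,d\lambda\,ds,\quad h_s:=I_\xi(\mu_s^\xi),
\end{equation*}
with $M^{[p]}$ a continuous local martingale additive functional locally of finite energy. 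Its sharp bracket is the PCAF whose Revuz measure is the energy measure of $F_p$, which from \eqref{def:symmetric-dirichlet-form} equals $4\pi^2\xi^2\big(\int_{\mb{U}}p^2\,d\mu_s^\xi\big)\,ds$; being absolutely continuous in $t$, a standard representation (on a possibly enlarged probability space) produces a standard Brownian motion $\beta^p$ with $M_t^{[p]}=2\pi\xi\int_0^t\big(\int_{\mb{U}}p^2\,d\mu_s^\xi\big)^{1/2}d\beta_s^p$. Summing the two parts is precisely \eqref{eq:1d-projections}. I expect the crux to be Steps 1--2, namely proving closability of the weighted gradient form against the $\sigma$-finite, non-Gaussian reference measure $\m$ (which through the chain rule amounts to integrability of $D\mc{V}$, Lemma \ref{lem:integrability-sym}, with extra care in the zero-mode direction) and establishing enough regularity of the form on $\mc{M}(\mb{U})$ to run the Fukushima construction; once the diffusion exists, the identification of the drift via $I_\xi$ and the bracket computation are routine.
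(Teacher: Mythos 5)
Your proposal follows essentially the same route as the paper: construct the process as the diffusion properly associated with the regular, local symmetric Dirichlet form (closability resting on the integration by parts \eqref{eq:ibp-symmetric} and the integrability Lemma \ref{lem:integrability-sym}, with the Markov/regularity/locality checks that the paper imports from the stochastic Ricci flow arguments), and then identify the drift and martingale part of the coordinate functionals through the Fukushima decomposition and the matching of Revuz measures, with $I_\xi$ recovering $h_t$ from $\mu_t^\xi$. The only differences are presentational — you spell out the $\mc{F}_{\mathrm{loc}}$ treatment of $F_p$ and the point-separation argument slightly more explicitly than the paper does — so there is nothing substantive to add.
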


A diffusion is called $\m$-symmetric if its associated semigroup $P_t$ satisfies $\int f P_t g d\m= \int g P_t f d\m$. If the symmetric process has an initial distribution absolutely continuous w.r.t. the symmetrizing measure $\m$, then for every $t>0$, its distribution is still absolutely continuous w.r.t. $\m$ since
$$
\E_{f d\m} [ g(X_t)  ] = \E_{f d\m} [P_t g ] = \int f P_t g d\m = \int (P_t f) g d\m = \int g \rho_t d\m
$$
so the Radon-Nikodym derivative of $X_t$ under $f d\m$ is given by $\rho_t = P_t f$.

Quasi-everywhere (q.e.) is a notion associated to the Dirichlet form, in particular to its capacity (see \cite[Chapter 2]{FOT11}). A statement is said to hold q.e. if there exists a set $N$ of zero capacity such that the statement is true for $x \notin N$. This introduces exceptional sets finer than $\m$-negligible set: any set of zero capacity is $\m$-negligible.

\paragraph{Remark.} The weak solution $(\mu_t^{\xi}) = (e^{\xi h_t})$ induces a natural growth process. Indeed, we can consider the growth evolution associated with the driving measure $e^{-\xi h_t}$ of the Loewner-Kufarev equation. To consider this evolution, it is sufficient to define the boundary measure $e^{-\xi h_t}$ for Lebesgue every time $t$ in order to obtain the conformal maps $(g_t)$. Here, we retrieve first $h_t$ from $e^{\xi h_t}$ for Lebesgue every time $t$, using $I_{\xi}$, and then define $e^{-\xi h_t}$, again for Lebesgue every time $t$.

\paragraph{Total mass process.} We consider the ``total mass process" associated with $(\mu_t^{\xi}) = (e^{\xi h_t})$, namely
\begin{equation}
X_t := \int_{\mb{U}} e^{\xi h_t} d\lambda := |\mu_t^{\xi}|,
\end{equation}
Using Proposition \ref{prop:weak-solution}, we get, for some standard Brownian motion $(B_t)$,
$$
d X_t = 2\pi^2 \xi^2 dt + 2\pi \xi \sqrt{X_t} dB_t 
$$
With $Y_t := \sigma X_t$ and $2 \pi \xi \sqrt{\sigma} =1$, we have
$$
d Y_t = \sqrt{Y_t} dB_t + \frac{1}{2} dt.
$$
$X_t$ is, up to a multiplicative factor, a $\frac{1}{2}$-dimensional squared Bessel process (see, e.g., Section 8.4.3 in \cite{LeGall-calculus}).

\begin{proof}[Proof of Proposition \ref{prop:weak-solution}]
We first recap the plan to prove this proposition and mention explicitly where the steps are proved. The main parts below consists in proving first the existence of a diffusion and then that the dynamics \eqref{eq:1d-projections}  hold.

First, we need to show that the symmetric form \eqref{def:symmetric-dirichlet-form} is \textit{Markovian} and \textit{closable} (see \cite[Section 1.1]{FOT11} for a precise definition; roughly speaking the Markov property states that if $g$ is a contraction then $\mc{E}(g\circ F, g\circ F) \leq \mc{E}(F,F)$, the form is closed if the set $D(\mc{E}) \subset L^2(\m)$ on which $\mc{E}$ is defined is complete with the metric $\mc{E}_{\alpha}(F,G) = \mc{E}(F,G) + \alpha \int FG d\m$ for some $\alpha >0$). Then, it has a closed extension which we still denote by $\mc{E}$.  Following \cite{FOT11}, a symmetric form which is Markovian and closed is a \textit{Dirichlet form} (this is a definition).  We use the theory of Dirichlet form to construct an associated process. In particular, from \cite[Section 7.2]{FOT11}, any \textit{regular} Dirichlet form admits a Hunt process (Theorem 7.2.1).  This asks for the Dirichlet form to have a ``core" namely a subset
$\mc{C}$ of $\mc{D}(\mc{E}) \cap C(X)$ such that $\mc{C}$ is dense in $\mc{D}(\mc{E})$ with norm $\mc{E}_{\alpha}$ and dense in the space
of continuous functions $C(X)$ with uniform norm. So, we show that $\mc{E}$ is regular. From \cite[Section 4.5]{FOT11}, the \textit{local property} of a regular Dirichlet form (namely, $\mc{E}(F,G) = 0$ whenever $F, G \in \mc{D}(\mc{E})$ have disjoint compact supports) implies the existence of an $\m$-symmetric Hunt process $(\mu_t^{\xi})$ that possesses continuous sample paths with probability one (for an introduction to Hunt processes, see \cite[Appendix A.2]{FOT11}). Such a process is called an $\m$-symmetric diffusion. 

A proof of the Markov property  can be found in the proof of \cite[Proposition 3.9]{DS19}. Closability is proved in  \cite[Lemma 3.8]{DS19}; this is the part that relies on the integration by parts. Regularity is proved in \cite[Proposition 3.9]{DS19}. Locality is proved in \cite[Proposition 3.10]{DS19} (in fact, strong locality is also proved there).


To argue that $(\mu_t^{\xi})$ is a weak solution of  \eqref{eq:symmetric-SPDE}, we need to show that the semi-martingale decomposition of the one-dimensional projection \eqref{eq:1d-projections} holds. For appropriate test functions $F$,  the additive functional \cite[Section 5.2]{FOT11} $F(\mu_t^{\xi}) - F(\mu_0^{\xi})$   admits a unique Fukushima decomposition $F(\mu_t^{\xi}) - F(\mu_0^{\xi}) = M^F_t+N^F_t$ where $M^F$,  $N^F$ are additive functionals, $M^F$ has finite energy and $N^F$ has  zero-energy (see, e.g., \cite[Theorem 5.2.2]{FOT11}). The energy of an additive functional $Y$ is (by definition) given by $\lim_{t \to 0} \frac{1}{2t}  \E_{\m} (Y_t^2)$ whenever the limit exists. We need to identify the quadratic variation of the martingale $M^F$, which we denote by $A_t^F$ and which is a Positive Continuous Additive Functional (PCAF) and the drift $N^F$. In both cases, this relies on the notion of \textit{Revuz measure} since there is a correspondence between these measures and PCAF.  So it is sufficient to show that the Revuz measures of some explicit PCAF coincide with those the quadratic variation and of the drift, which are respectively given in \cite[Theorem 5.2.3.]{FOT11} and \cite[Corollary 5.4.1.]{FOT11}. The two explicit additive functionals are respectively
$$
\tilde{A}_t^F = 2\pi^2 \int_0^t \langle \mc{D}F, \mc{D}F \rangle_{L^2(\mu_s^{\xi})} ds  \qquad \text{and} \qquad \tilde{N}_t = \int_0^t \mc{L} F(\mu_s^{\xi}) ds.
$$
Arguing along the same lines as in \cite{Albeverio-Rockner, DS19}, associated Revuz measures match hence the identification of the terms in the Fukushima decomposition.
\end{proof}

\subsubsection{Formal generator of the symmetric part}

\label{sec-formal-gen}

\paragraph{Formal expression.} We discuss here one obstacle encountered when trying to consider the symmetric part of the generator, without the formal change of variable.  For an orthonormal family  $P = \{ p_1, \dots, p_n \}$ in $L^{2}(\mb{U})$ with the Lebesgue measure, we denote by $\Pi_P$ the orthogonal projection on the span of the $p_i$'s. 

\begin{Lem}
\label{lem:ibp-sym-bon}
Suppose $F = \varphi( \int_{\mb{U}} h p_1 d\lambda, \dots, \int_{\mb{U}} h p_n d\lambda)$ and $G = \psi (\int_{\mb{U}} h q_1 d\lambda, \dots, \int_{\mb{U}} h q_m d\lambda)$ where the $p_i$'s are orthonormal in $L^2(\mb{U})$. Then, we have
\begin{equation}
\label{eq:Goal-Approximation}
\begin{split}
\int \langle DF, DG \rangle_{L^2(\mu)} d\rho  = - & \int F \left[  \sum_{j,j'} \partial_{j,j'} \psi \int_{\mb{U}} \Pi_{P}(	q_{j'}) q_j d\mu + \frac{\xi}{2\pi}  \int_{\mb{U}} \Pi_P(1) DG d\mu    \right. \\
& \left. +  \frac{1}{2\pi} \int_{\mb{U}}  \left[ \xi \E \left( \Pi_P(\partial_n H h) h \right) + \Pi_P(\partial_n H h) \right]  DG d\mu \right] d\rho.
\end{split}
\end{equation}
\end{Lem}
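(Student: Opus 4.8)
The plan is to reduce the identity to three standard operations: the orthonormal expansion of the $L^2(\lambda)$-gradient $DF$, the Gaussian integration by parts \eqref{eq:IBP-potential} for $\rho$ (in the pure gravity normalization $2\pi c=-\xi$, for which $D\mc V(h)=-\frac1{2\pi}(\partial_n Hh+\xi)$), and the chain/product rule for the directional derivative $D_{p_i}$ acting on the GMC measure $\mu=e^{-\xi h}$. First I would use that $\{p_1,\dots,p_n\}$ is orthonormal in $L^2(\lambda)$, so that $\partial_i\varphi=D_{p_i}F$ and $DF=\sum_i(\partial_i\varphi)p_i$, to write $\langle DF,DG\rangle_{L^2(\mu)}=\sum_i(D_{p_i}F)\big(\int_{\mb U}p_iDG\,d\mu\big)$. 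Applying \eqref{eq:IBP-potential} to $F\cdot(\int_{\mb U}p_iDG\,d\mu)$ in the direction $k=p_i$, expanding $D_{p_i}(F\int_{\mb U}p_iDG\,d\mu)$ by the product rule, and summing over $i$ then gives
$$
\int\langle DF,DG\rangle_{L^2(\mu)}\,d\rho=\int F\Big[\sum_i\big(\textstyle\int_{\mb U}p_iDG\,d\mu\big)(D_{p_i}\mc V)\;-\;\sum_iD_{p_i}\big(\textstyle\int_{\mb U}p_iDG\,d\mu\big)\Big]\,d\rho.
$$

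Next I would evaluate the two sums. For the first, $D_{p_i}\mc V=-\frac1{2\pi}\big(\langle p_i,\partial_n Hh\rangle_{L^2(\lambda)}+\xi\langle p_i,1\rangle_{L^2(\lambda)}\big)$, and the reproducing identity $\sum_i\langle p_i,g\rangle_{L^2(\lambda)}\,p_i=\Pi_P(g)$ converts it into $-\frac1{2\pi}\int_{\mb U}\big(\Pi_P(\partial_n Hh)+\xi\,\Pi_P(1)\big)\,DG\,d\mu$. For the second, the chain rule $D_{p_i}(DG)=\sum_{j,j'}(\partial_{j,j'}\psi)\langle p_i,q_{j'}\rangle q_j$ together with $D_{p_i}\mu=-\xi p_i\,d\mu$ gives $D_{p_i}\big(\int_{\mb U}p_iDG\,d\mu\big)=\sum_{j,j'}(\partial_{j,j'}\psi)\langle p_i,q_{j'}\rangle\int_{\mb U}p_iq_j\,d\mu-\xi\int_{\mb U}p_i^2\,DG\,d\mu$, whose sum over $i$ is $\sum_{j,j'}(\partial_{j,j'}\psi)\int_{\mb U}\Pi_P(q_{j'})q_j\,d\mu-\xi\int_{\mb U}\big(\sum_ip_i^2\big)DG\,d\mu$, again by the reproducing identity.

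The remaining ingredient is the identification $\sum_ip_i(w)^2=-\frac1{2\pi}\,\E\big(\Pi_P(\partial_n Hh)(w)\,h(w)\big)$: writing $\Pi_P(\partial_n Hh)=\sum_i\langle \partial_n Hp_i,h\rangle\,p_i$ and using that the boundary covariance $G_\partial(w,w')=-2\log|w-w'|$ satisfies $\partial_n HG_\partial(\cdot,x)=-2\pi\delta_x$, one gets $\E[\langle\partial_n Hp_i,h\rangle\,h(w)]=-2\pi p_i(w)$, hence the claim. Substituting this turns $-\xi\int_{\mb U}(\sum_ip_i^2)DG\,d\mu$ into $+\frac{\xi}{2\pi}\int_{\mb U}\E(\Pi_P(\partial_n Hh)h)\,DG\,d\mu$, and collecting all the terms yields exactly \eqref{eq:Goal-Approximation}. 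The integrability needed to justify the integration by parts against the $\sigma$-finite measure $\rho$ (finite GMC moments $\E(|\mu_{h_0}|^p)$ for $p$ near $1$, together with Gaussian exponential moments of the $h_0$-linear functionals) is the one provided in Section \ref{sec:integrability}.

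The computation is formal, and its only genuinely delicate point — the reason it is recorded as an obstacle — is the term $\int_{\mb U}\Pi_P(\partial_n Hh)\,DG\,d\mu$: for a fixed finite orthonormal family $P$ it is perfectly well defined, since $\Pi_P(\partial_n Hh)$ is a smooth random function, but it fails to stabilize as $P$ grows to a full basis, because $\Pi_P(\partial_n Hh)\to\partial_n Hh$ is genuinely distributional and, correspondingly, $\sum_ip_i^2$ (equivalently $\E(\Pi_P(\partial_n Hh)h)$) diverges. In particular the naive integration by parts \eqref{eq:formal-ibp-sym} silently omits this divergent correction, so one cannot pass to the limit $\Pi_P\to\mathrm{Id}$ to read off a bona fide symmetric generator, in contrast with the change-of-variables route of Section \ref{sec:symmetric-part}. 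A minor bookkeeping point is the constant mode: $\Pi_P(1)$ enters as a separate term precisely because it originates from the $\xi$-shift in $D\mc V$ (the $e^{\xi m}$ weight in $\rho$) rather than from the covariance.
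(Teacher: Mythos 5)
Your proposal is correct and follows essentially the same route as the paper's proof: integration by parts for $\rho$ in the directions $p_i$ (using orthonormality to write $\partial_i\varphi=D_{p_i}F$), the product rule with $D_{p_i}\mu=-\xi p_i\,d\mu$, the reproducing identity $\sum_i\langle p_i,g\rangle p_i=\Pi_P(g)$, and the identification $\sum_i p_i^2=-\frac{1}{2\pi}\E\big(\Pi_P(\partial_nHh)h\big)$. The only cosmetic difference is that you verify this last identity via the kernel relation $\partial_nHG_\partial(\cdot,x)=-2\pi\delta_x$ from the preliminaries, whereas the paper expands in the Fourier basis; the assembly of terms (including the $\Pi_P(1)$ contribution coming from the zero-mode part of $D\mc{V}$) is identical.
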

Before proving this lemma, note that by increasing $P$ to an orthonormal basis of $L^2(\mb{U})$ in \eqref{eq:Goal-Approximation}, one hopes to get
\begin{align*}
\int \langle DF, DG \rangle_{L^2(\mu)} d\rho  = -  \int F \left[  \sum_{j,j'} \partial_{j,j'} \psi \int_{\mb{U}} 	q_{j'} q_j d\mu + \frac{\xi}{2\pi}  \int_{\mb{U}}  DG d\mu     +  \frac{1}{2\pi} \int_{\mb{U}}  \left[ \xi \E \left( (\partial_n H h) h \right) + \partial_n H h \right]  DG d\mu \right] d\rho
\end{align*}
but it is not clear that $\int_{\mb{U}}  \left[ \xi  \E ( \partial_n H h(w) h(w) ) + \partial_nH h(w) \right] q d\mu$ makes sense for $\rho$-a.e. (but when $\xi = 0$ in which case this is well-defined when $q$ is smooth). We discuss further this term after the proof of the lemma. Finally, let us point out that an alternative proof of this fact is provided in the appendix (Section \ref{sec:GI}).

\begin{proof}
We set $S_{2}(P) := \sum_i p_i^2$, $\bar{S}_2^1(P) := \sum_i p_i \bar{p}_i$ and  we show first that
\begin{align*}
&\int \langle DF, DG \rangle_{L^2(\mu)} d\rho \\
& = \int \varphi \left[ -\sum_{j,j'} \partial_{j,j'} \psi \int_{\mb{U}} \Pi_{P}(	q_{j'}) q_j d\mu + \xi \int_{\mb{U}} (S_2(P)- \bar{S}_2^1(P)) DG d\mu - \frac{1}{2\pi} \int_{\mb{U}} \Pi_P(\partial_n H h) DG d \mu \right] d\rho
\end{align*}
By rewriting $\partial_i \varphi = D_{p_i} \varphi$ and recalling that
$$
D_k \mc{V} = - \frac{1}{2\pi} \int_{\mb{U}} k (\partial_n H h + \xi) d\lambda = - \frac{1}{2\pi} \int_{\mb{U}} h \partial_n H k d\lambda - \frac{\xi}{2\pi} \int_{\mb{U}} k d\lambda
$$
we have, by integration by parts,
\begin{align*}
\int \partial_i \varphi \partial_j \psi \int_{\mb{U}} p_i q_j d\mu d\rho  = &  \int \varphi \left[ - D_{p_i} \left[ \partial_j \psi \int_{\mb{U}} p_i q_j d\mu \right] + \partial_j \psi \int_{\mb{U}} p_i q_j d\mu D_{p_i }\mc{V} \right] d\rho \\
 = & \int \varphi \left[ - \sum_{j'} \partial_{j,j'}  \psi \int_{\mb{U}} p_i q_{j'} d\lambda \int_{\mb{U}} p_i q_j d\mu + \xi \partial_j \psi  
\int_{\mb{U}} (p_i^2 - p_i \bar{p}_i) q_j d\mu \right. \\
& \left. - \frac{1}{2\pi} \partial_j \psi \int_{\mb{U}} p_i q_j d\mu \int_{\mb{U}} h \partial_n H p_i d\lambda \right] d\rho 
\end{align*}
Since $\Pi_P(f) = \sum_i ( \int_{\mb{U}} f p_i d\lambda ) \ p_i$, summing over $i$'s gives
\begin{align*}
\sum_i \int \partial_i \varphi \partial_j \psi \int_{\mb{U}} p_i q_j d\mu d\rho  = & \int \varphi \left[ - \sum_{j'} \partial_{j,j'}  \psi \int_{\mb{U}} \Pi_P(q_j') q_j d\mu + \xi \partial_j \psi  
\int_{\mb{U}} ( S_2(P) - \bar{S}_2^1(P) )q_j d\mu \right. \\
& \left. - \frac{1}{2\pi} \partial_j \psi \int_{\mb{U}} \Pi_P(\partial_n H h) q_j d\mu \right] d\rho 
\end{align*}
and this first step follows by summing over the $j$'s.

Then, note that $ \E \left[ \Pi_P(\partial_n H h)(x) h(x) \right] = -2\pi S_2(P)(x)$. Indeed, since $\partial_n H h = - \sum_i \sqrt{2\pi \lambda_i} \ e_i X_i$ and $\Pi_P(\partial_n H h) = \sum_j \left( \int_{\mb{U}} p_j \partial_n H h d\lambda \right) p_j$, we have
\begin{align*}
 \E \left[ \Pi_P(\partial_n H h)(x) h(x) \right] & = - \sum_j \sum_i  \sqrt{2\pi \lambda_i} \int_{\mb{U}} p_j e_i d\lambda \ p_j(x) \sqrt{\frac{2\pi}{\lambda_i}} e_i(x) = - 2\pi \sum_j p_j(x)^2 = -2\pi S_2(P)(x).
\end{align*}
Finally, note that $\bar{S}_2^1(P) = \sum_i p_i \bar{p}_i = \frac{1}{2\pi} \sum \langle 1 , p_i \rangle p_i = \frac{1}{2\pi}  \Pi_P(1)$.
\end{proof}

\paragraph{Comparison with the derivative martingale.} This form, namely $\int_{\mb{U}}  \left[ \xi  \E ( \partial_n H h(w) h(w) ) + \partial_nH h(w) \right] q d\mu$ is a sort of renormalization of $(\partial_n H h) e^{-\xi h}$ and is reminiscent of the so called ``derivative martingale'' which is formally given by
\begin{equation}
\label{eq:derivative-mart}
\frac{d}{d\gamma} e^{\gamma h(x) - \frac{\gamma^2}{2} \E(h(x)^2)} = [h(x) - \gamma \E(h(x)^2)]  e^{\gamma h(x) - \frac{\gamma^2}{2}\E(h(x)^2)}.
\end{equation}
The derivative martingale was brought to the context of  LQG in \cite{Critical-lqg} in order to construct a measure for LQG with parameter $\gamma = 2$.  In our case, we have a natural martingale by considering the Fourier basis but higher modes contribute much more. In fact, as pointed out to us by Zeitouni, we can represent it as a derivative martingale. Set $\bar{\xi} := (\xi_1, \dots, \xi_N, \dots)$ and
$M_N(\bar{\xi}) := \prod_{1\leq i \leq 2N} \exp( -\xi_i X_i \frac{e_i}{\sqrt{\lambda_i}} - \frac{\xi_i^2}{2 \lambda_i} e_i^2)$. Then, for every $\bar{\xi}$, this is a martingale and
$$
\sum_i \lambda_i {\frac{d}{d \xi_i }}_{|\xi_i=\xi} M_N(\bar{\xi})  = \left( \partial_n H h_N + \xi \E(h_N \partial_n H h_N) \right) M_{N}(\xi).
$$
Indeed, note that $\lambda_i \frac{d}{d \xi_i} M_N(\bar{\xi}) = (- \sqrt{\lambda_i} X_i e_i - \xi_i e_i^2) M_N$. Then, with $h_N = \sum_{1 \leq i \leq 2N} X_i \frac{e_i}{\sqrt{\lambda_i}}$, we see that $\partial_n H h_N = - \sum_{1 \leq i \leq 2N}  \sqrt{\lambda_i} X_i e_i$ and $-\xi \E(h_N \partial_n H h_N) =  \sum_{1 \leq i \leq 2N} \xi e_i^2 $.

However, an immediate difference with the derivative martingale of LQG is that for $\gamma$ is small enough, this later measure has a finite second moment whereas this is not the case for $``(\partial_n H h)e^{-\xi h}"$. First, the  Gaussian identity \eqref{eq:Gaussian-identity-1} gives
$$
\E(\Pi_P(\partial_n H h)(w) e^{- \xi h(w)- \frac{\xi^2}{2} \E(h(w)^2) } ) = -\xi \E( \Pi_P(\partial_n H h)(w) h(w) ).
$$
Then, the  Gaussian identity \eqref{eq:Gaussian-identity-3} with $W= \Pi_P(\partial_n H h)(w) $, $Z = \Pi_P(\partial_n H h)(z)$, $X =  - \xi h(w)$, $Y= - \xi h(z)$ gives
\begin{align*}
& \E \left( \left( \int_{\mb{U}}  \left[ \xi  \E ( \Pi_P(\partial_n H h) h ) + \Pi_P(\partial_n H h) \right] q d\mu \right)^2 \right) \\
= &  \int_{\mb{U}^2} \Big( \xi^2 \E( \Pi_P(\partial_n H h)(w) h(z) ) \E( \Pi_P(\partial_n H h)(z) h(w) + \E(\Pi_P(\partial_n H h)(w) \Pi_P(\partial_n H h)(z) )   \Big) q(w) q(z)\frac{dwdz}{|w-z|^{2\xi^2}}
\end{align*}
So, increasing $P$ to an orthonormal basis would give
$$
 \int_{\mb{U}^2} \Big( \xi^2 \E( \partial_n H h(w) h(z) ) \E( \partial_n H h(z) h(w) + \E(\partial_n H h(w) \partial_n H h(z) )   \Big) q(w) q(z)\frac{dwdz}{|w-z|^{2\xi^2}}
$$
which is infinite, for every $\xi >0$, e.g. when $q=1$. 

\paragraph{Competing interests.} The authors have no competing interests to declare that are relevant to the content of this article.

\section{Appendix}

\subsection{Gaussian identities}

\label{sec:GI}

\paragraph{Gaussian integration by parts.}

If $X,X_1,\dots,X_n,Y$ are variables in a centered Gaussian space, $\psi\in C^\infty_c$, then by integration by parts, 
\begin{equation}
\label{eq:basic-ibp-1}
\E(\psi'(X)) \Var(X)=\E(X\psi(X)), \quad \E(\psi'(X)) \Cov(X,Y)=\E(\psi(X)Y)
\end{equation}
and more generally
\begin{equation}
\label{eq:basic-ibp-2}
\E(\psi(X_1,\dots,X_n)Y)=\sum_i\E(\psi_i(X_1,\dots,X_n)) \Cov(X_i,Y).
\end{equation}

\paragraph{Cameron-Martin formula.}
Recall first that if $(X,Y)$ is a Gaussian vector and $Y$ is centered, then for a smooth function $F$ with some mild growth condition, $\E(F(X) e^{Y-\E(Y^2)/2}) = \E(F(X+\Cov(X,Y)))$. This is referred to as the Cameron-Martin formula and can be interpreted by a shift of the mean of the distribution of $X$ when the reference measure is tilted by $e^{Y-\E(Y^2)/2}$. We gather here some consequences of this identity. 

Let $(X,Y,W,Z)$ be a Gaussian vector with $(X,Y)$ being centered. By applying the Cameron-Martin formula with shift $Y$, we have
\begin{equation}
\label{eq:Gaussian-identity-1}
\E( W e^{Y- \E(Y^2)/2 })  = \E(W Y).
\end{equation}
With the shift $X+Y$, we have
\begin{equation}
\label{eq:Gaussian-identity-2}
\E(W Z  e^{X-\E(X^2)/2} e^{Y-\E(Y^2)/2}) = \left( \E(WZ) + (\E(WX)+\E(WY))(\E(ZX)+\E(ZY)) \right) e^{\E(XY)}
\end{equation}
and
\begin{equation}
\label{eq:Gaussian-identity-3}
\begin{split}
\E \left( \left( W -\E(W e^{X-\E(X^2)/2}) \right) \left( Z -\E(Z e^{Y-\E(Y^2)/2}) \right)  e^{X-\E(X^2)/2} e^{Y-\E(Y^2)/2} \right)  \\
 = \left( \E(WY) \E(ZX) + \E(WZ) \right) e^{\E(X Y)}.
\end{split}
\end{equation}

\paragraph{Alternative proofs using the Cameron-Martin formula.}
 
Several computations carried out could have been done using the Cameron-Martin formula \eqref{eq:weighting-mass}. Since this way of computing is sometimes useful, but not in the spirit of most of the computations carried above, we present here some applications of this formula where we reinterpret some of our results.

\begin{Lem}
\label{Lem:tilde-DF-shift}
Consider a covariance kernel given on $\mb{U}^2$ by $G_{\partial}(w,z) = -2 \log |w-z|$ and a  bulk test functional $F = \varphi (\int h p_1 d\lambda, \dots, \int h p_n d\lambda)$. If $f(h,w) := \wt{DF}(h,w)$ then $f(h-\xi G_{\partial}(\cdot, w)) = \frac{1}{2\pi \xi} \partial_\theta F (h-\xi G_{\partial}(\cdot,w))$.
\end{Lem}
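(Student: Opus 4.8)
The plan is to unwind both sides of the asserted identity into the explicit Fourier data of $F$ and then invoke two elementary facts: the identification of the Girsanov shift kernel and the commutation of the tangential derivative with $(-\partial_n H)^{-1}$. First I would write $F(h)=\varphi\bigl(\int_{\mb U}hp_1\,d\lambda,\dots,\int_{\mb U}hp_n\,d\lambda\bigr)$, so that $D_kF(h)=\sum_i\partial_i\varphi(\cdots)\int_{\mb U}p_ik\,d\lambda$; hence the $L^2(\lambda)$-gradient is $DF(h)=\sum_i\partial_i\varphi(\cdots)\,p_i$ and its harmonic conjugate is $\widetilde{DF}(h)=\sum_i\partial_i\varphi(\cdots)\,\widetilde{p_i}$. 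Consequently $f(h-\xi G_\partial(\cdot,w),w)=\sum_i\partial_i\varphi\bigl(\int_{\mb U}(h-\xi G_\partial(\cdot,w))p_1\,d\lambda,\dots\bigr)\widetilde{p_i}(w)$, while on the right-hand side the only $w$-dependence of $F(h-\xi G_\partial(\cdot,w))$ is carried by the arguments $\int_{\mb U}(h-\xi G_\partial(\cdot,w))p_j\,d\lambda$ of $\varphi$.

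Next I would identify the shift kernel. Since $\partial_nH G_\partial(\cdot,x)=-2\pi\delta_x$ (recalled just after \eqref{def:rho0}; equivalently, $-2\log|1-e^{i\theta}|=2\sum_{m\ge1}m^{-1}\cos m\theta$ together with $\partial_nHe_k=-\lambda_ke_k$), applying $\partial_nH$ in the variable $w$ to $w\mapsto\int_{\mb U}G_\partial(w',w)p(w')\,d\lambda(w')$ and using Fubini shows that, for smooth $p$, this function equals $2\pi(-\partial_nH)^{-1}p$ on $\mb U$. Hence $\int_{\mb U}(h-\xi G_\partial(\cdot,w))p_j\,d\lambda=\int_{\mb U}hp_j\,d\lambda-2\pi\xi\,(-\partial_nH)^{-1}p_j(w)$. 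Differentiating tangentially in $w$ and using $\partial_tq=\partial_nH\widetilde q$ together with the fact that $\widetilde{(\cdot)}$ commutes with $\partial_nH$ (both are diagonalized by the Fourier basis and $\lambda_{2m-1}=\lambda_{2m}=m$), I get $\partial_t\bigl[(-\partial_nH)^{-1}p_j\bigr]=\partial_nH\bigl[(-\partial_nH)^{-1}\widetilde{p_j}\bigr]=-\widetilde{p_j}$, so $\partial_t\int_{\mb U}(h-\xi G_\partial(\cdot,w))p_j\,d\lambda=2\pi\xi\,\widetilde{p_j}(w)$.

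Combining the two steps via the chain rule gives
\[
\partial_t\bigl[F(h-\xi G_\partial(\cdot,w))\bigr]=2\pi\xi\sum_j\partial_j\varphi\bigl(\textstyle\int_{\mb U}(h-\xi G_\partial(\cdot,w))p_1\,d\lambda,\dots\bigr)\widetilde{p_j}(w)=2\pi\xi\,\widetilde{DF}(h-\xi G_\partial(\cdot,w))(w),
\]
which is precisely $2\pi\xi\,f(h-\xi G_\partial(\cdot,w))$, proving the lemma. I do not expect a genuine obstacle here: the statement is a calculus identity for cylindrical functionals. The only points requiring care are (i) recognizing that the field shift $h\mapsto h-\xi G_\partial(\cdot,w)$ is exactly the Girsanov shift produced when reweighting by the boundary GMC measure $e^{-\xi h}$ at $w$, i.e.\ \eqref{eq:weighting-mass} with $\alpha=-\xi$ and covariance $G_\partial$, and (ii) the sign and normalization bookkeeping in $\partial_t(-\partial_nH)^{-1}=-\,\widetilde{(\cdot)}$. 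One should also observe that $G_\partial(\cdot,w)$, though logarithmically singular at $w$, is integrable, so the pairings $\int_{\mb U}G_\partial(\cdot,w)p_j\,d\lambda$ and their $w$-derivatives are well defined for smooth $p_j$, and the constant Fourier modes of the $p_j$ drop out of both sides since $\widetilde{e_0}=0$ and $\int_{\mb U}G_\partial(\cdot,w)\,d\lambda=0$.
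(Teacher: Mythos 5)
Your proof is correct and follows essentially the same route as the paper's: you use $\partial_nHG_\partial(\cdot,w)=-2\pi\delta_w$ to identify the shift of each argument of $\varphi$ as $2\pi\xi$ times a preimage of $p_j$ under $-\partial_nH$, and then the relation $\partial_tq=\partial_nH\tilde q$ (the paper phrases this via an explicit primitive $P_i$ with $\partial_nHP_i=p_i$, which is the same bookkeeping as your $(-\partial_nH)^{-1}p_i$) to convert the tangential $w$-derivative into $2\pi\xi\,\widetilde{p_j}(w)$ and conclude by the chain rule. Your remark that the constant modes drop out on both sides is a small point the paper leaves implicit.
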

\begin{proof}
First, note that $\widetilde{\partial_n H p} = \partial_\theta p$: if $p = \sum \alpha_k e_k$, then $\widetilde{\partial_n H p} = - \sum \alpha_k \lambda_k \wt{e}_k = \sum \alpha_k \partial_\theta e_k = \partial_\theta p$. When $f(h,x) = \sum_i \partial_i \varphi( \int h p_1 d\lambda, \dots, \int h p_n d\lambda) \tilde{p}_i(x)$, introduce $P_i$ s.t. $\partial_n H P_i(x) = p_i(x)$ so that 
$$
\int G_{\partial}(\cdot, x) p_i(\cdot) d\lambda  = \int G_{\partial}(\cdot, x) \partial_n H P_i(\cdot) d\lambda = \int \partial_n H G_{\partial}(\cdot, x)  P_i(\cdot) d\lambda = -2\pi P_i(x),
$$
where we used $\partial_n H G_{\partial}(\cdot,x) = - 2\pi \delta_x(\cdot)$ to obtain the last equality. So, in this case, using $\tilde{p}_i(x) = \widetilde{\partial_n H P_i}(x) = \partial_\theta P_i(x) $ we have
\begin{align*}
f(h-\xi G_{\partial}(\cdot,x),x) & = \sum_i \partial_i \varphi( \int h p_1 d\lambda + 2\pi \xi P_1(x), \dots, \int h p_n d\lambda + 2\pi \xi P_n(x)) \tilde{p}_i(x) \\
&  = \frac{1}{2\pi \xi} \partial_\theta \varphi( \int h p_1 d\lambda + 2\pi \xi P_1(x), \dots, \int h p_n d\lambda + 2\pi \xi P_n(x))
\end{align*}
and this completes the proof.
\end{proof}

\textit{Discussion on Lemma \ref{Lem:rotational-invariance-v1}.}  We introduce and have
\begin{equation}
\label{eq:conjugate-dirichlet-form}
\mc{E}_c(F,G) := \int \langle \widetilde{DF} , \widetilde{DG}  \rangle_{L^2(\mu)} d\rho =  \int F  (- \mc{L}_c G ) d \rho
\end{equation}
where 
$$
\mc{L}_c G = \sum_{i,j} \partial_{i,j} \psi \int \tilde{q}_i \tilde{q}_j d\mu - \frac{1}{2\pi \xi} \int \partial_n H DG d\mu.
$$
Then, Lemma \ref{Lem:rotational-invariance-v1} is equivalent to $\int \mc{L}_c F d\rho = 0$ which comes from  \eqref{eq:conjugate-dirichlet-form} since then $\mc{L}_c^* = \mc{L}_c$ so $\int \mc{L}_c F d\rho = \int F \mc{L}^*1 d\rho  = 0$. \eqref{eq:conjugate-dirichlet-form} is indeed a corollary of the Cameron-Martin formula and Lemma \ref{Lem:tilde-DF-shift}:
$$
\int \langle \widetilde{DF} , \widetilde{DG}  \rangle_{L^2(\mu)} d\rho  = \frac{1}{(2\pi \xi)^2} \int \langle \partial_\theta F_x, \partial_\theta G_x \rangle_{L^2(\lambda)} d\rho = - \int \frac{1}{(2\pi \xi)^2} \langle F_x, \partial_\theta^2 G_x \rangle_{L^2(\lambda)} d\rho
$$
and
$$
\partial_\theta G_x = 2\pi \xi \sum_i (\partial_i \psi)_x \tilde{q}_i(x), \quad
\partial_\theta^2 G_x  =  (2\pi \xi)^2 \sum_{i,j} (\partial_{i,j} \psi)_x \tilde{q}_i(x) \tilde{q}_j(x)  + 2\pi \xi \sum_i (\partial_i \psi)_x \partial_\theta \tilde{q}_i(x) 
$$
so, with $\partial_\theta \tilde{q}_i = - \partial_n H q_i$,
$$
\int \langle \widetilde{DF} , \widetilde{DG}  \rangle_{L^2(\mu)} d\rho =  - \int F \left[  \sum_{i,j} \partial_{i,j} \psi \int \tilde{q}_i \tilde{q}_j d\mu - \frac{1}{2\pi \xi} \int \partial_n H DG d\mu  \right] d\rho.
$$

\textit{Discussion on Lemma \ref{lem:ibp-sym-bon}.} For $F = F(h)$, we write $F_x$ for $F(h-\xi G_{\partial}(\cdot,x))$. Assume $(p_1, \dots, p_n)$ is orthonormal in $L^2(\lambda)$. Then, for $F = \varphi (\int_{\mb{U}} h p_1 d\lambda, \dots, \int_{\mb{U}} h p_n d\lambda)$, $G = \psi (\int_{\mb{U}} h q_1 d\lambda, \dots, \int_{\mb{U}} h q_m d\lambda)$
\begin{align*}
\int \langle DF, DG \rangle_{L^2(\mu)} d\rho & = \iint_{\mb{U}}  (DF)_x (DG)_x d\lambda d\rho  = \sum_{i,j} \iint_{\mb{U}} (\partial_i \varphi)_x  (\partial_j \psi)_x p_i(x) q_j(x) d\lambda d\rho \\
& = \sum_{i,j} \iint_{\mb{U}} D_{p_i} \varphi_x (\partial_j \psi)_x p_i(x) q_j(x) d\lambda d\rho \\
& = \sum_{i,j} \iint_{\mb{U}} \varphi_x \left[- D_{p_i}(\partial_j \psi)_x - \frac{1}{2\pi} (\partial_j \psi)_x \int_{\mb{U}} ( \partial_n H h + \xi) p_i d \lambda   \right] p_i(x) q_j(x) d\lambda d\rho \\
& = \iint_{\mb{U}} \varphi_x \left[ - \sum_{j,j'} (\partial_{j,j'} \psi)_x \Pi_P(q_{j'})(x) q_j(x) - \frac{\xi}{2\pi} \Pi_P(1)(x) (DG)_x - \frac{1}{2\pi} \Pi_P(\partial_n H h)(x) (DG)_x  \right] d\lambda d\rho
\end{align*}
where the first equality uses the Cameron-Martin formula, the third one the fact that $P$ is orthonormal and the fourth one an integration by parts.

\subsection{Inverse mapping}
\label{sec:inverse}

We consider for $\xi \in (0,1)$ the GMC $e^{\xi h}$ on $\mb{U}$. This gives a coupling $(h, e^{\xi h})$ where $e^{\xi h}$ is measurable with respect to $h$. We want show the existence of a measurable map $I_{\xi}$ such that $\rho$-a.e., $h = I_{\xi}(e^{\xi h})$. The proof in \cite{BSS14} can be adapted for the log-correlated Gaussian field $h$ on $\mb{U}$, in particular by relying on the detailed study of the field $h$ and its chaos measures in \cite{welding}. We just sketch the main arguments below. First, set 
\begin{equation}
B_{\eps}(x) := \{ e^{i y} : y \in (x-\eps,x+\eps) \} \qquad \text{and} \qquad e^{\xi h^{\eps}(e^{ix})}:= \mu_{\xi}(B_\eps(x))
\end{equation}
so $h^{\eps}(x) = \xi^{-1} \log \mu_{\xi}(B_\eps(x))$. Then,  the following convergence occurs in $L^2$
$$
\int h p d\lambda = \lim_{\eps \to 0} \int (h^{\eps}(w) - \E h^{\eps}(w) ) p(w) d\lambda
$$
To justify it, consider an approximation $h_{\eps}(x)$ at space-scale (e.g., $h_{\eps} := \langle h, \rho_{\eps}^x \rangle$ for a mollification to be specified) and set $f_{\eps}(x) := h^{\eps}(x)-h_{\eps}(x)$. For the convergence to occur, it is sufficient to have the following pointwise estimates: there exists $\alpha > 0$ and $0< \kappa < 1$ such that, uniformly in $\eps \in (0,1/4)$,
\begin{align}
\label{eq:pointwise-var}
& \Var f_{\eps}(x)  \leq C \log \eps^{-1} \\
\label{eq:pointwise-cov}
& | \Cov f_{\eps}(x), f_{\eps}(y) | \leq C \eps^{\alpha} \qquad \text{for } |x-y| > \eps^{\kappa} 
\end{align}
Indeed, by splitting $\mb{U}^2$ in points $\{|w-z| < \eps^{\kappa} \}$ and its complement, using the variance bound in the former case and the covariance one in the latter case, we get $\Var \int p(e^{ix}) f_{\eps}(x) dx \leq C( \eps^{\kappa} \log \eps^{-1} + \eps^{\alpha} )$. The desired convergence follows then from the one of $\int_{\mb{U}} h_\eps p d\lambda$.

We start with \eqref{eq:pointwise-var}. Consider the GMC $\eta = e^{\xi \psi}$ associated with the field $\psi$ with covariance $\E(\psi(x) \psi(y)) = - \log (|x-y| \wedge 1)$ on $\mb{R}$. When fixing an interval $I$ of length $< \pi$, by Lemma 3.6 and equation (53) of \cite{welding}, there exists a coupling of $\mu$ and $\eta$ with a random variable $X$ having Gaussian tails such that for any interval $B\subset I$,  $e^{-X} \eta(B) \leq \mu(B) \leq \eta(B) e^X$. In particular, $\Var \log \mu(B_{\eps}(x)) \leq 2 \E(X^2) + 2 \Var \log \eta (B_{\eps}(x))$ . The pointwise variance estimate \eqref{eq:pointwise-var} follows from the exact scaling relation (54) in \cite{welding} and from the upper bound on the pointwise variance of the mollification of a log-correlated field (which can be found in \cite{Berestycki17}) for $\Var h_{\eps}(x)$.

We sketch here the main ideas to get the covariance bound \eqref{eq:pointwise-cov}. Use a white-noise representation of the field to split it into two parts, a fine field whose restriction on $B_{\eps}(x)$ and $B_{\eps}(y)$ are independent and a coarse field, which is independent. This is useful to obtain the exact decorrelation of measure on these sets. In \cite{welding}, the field $h$ is represented with a white-noise on $\mb{U} \times \mb{R}^+$ (up to an independent additive  constant) where the $y$-axis represents the spatial scale, so a natural way to consider a coarse field and a fine field is to split horizontally the domain of the white-noise ($\{ y < \eps^{\kappa} \}$ and its complement). Doing so, we write $h = h_{(0,\eps^{\kappa})} + h_{(\eps^{\kappa},\infty)}$.
Then, note that the oscillation of the coarse field on a microscopic ball is of order $\osc_{B_{\eps}(x)} h_{(\eps^{\kappa},\infty)} = O(\eps^{1-\kappa})$ so that 
$$
\mu_h (B_{\eps}(x)) = \mu_{h_{(0,\eps^{\kappa})}}(B_{\eps}(x)) e^{\xi h_{(\eps^{\kappa},\infty)}(x)} e^{O(\eps^{1-\kappa})}
$$
and
\begin{align*}
\xi^{-1} \log \mu_h (B_{\eps}(x)) - h_{\eps}(x) & = \xi^{-1} \log \mu_{h_{(0,\eps^{\kappa})}} (B_{\eps}(x)) + h_{(\eps^{\kappa},\infty)}(x) - h_{\eps}(x) + O(\eps^{1-\kappa}) \\
& = \left( \xi^{-1} \log \mu_{h_{(0,\eps^{\kappa})}} (B_{\eps}(x))  - h_{(\eps,\eps^{\kappa})}(x) \right) + \left( h_{(\eps,\infty)}(x) - h_{\eps}(x)  \right) + O(\eps^{1-\kappa}) \\
& = M_x + F_x + R_x
\end{align*}
so that when $|x-y| > \eps^{\kappa}$, $M_x$ and $M_y$ are independent. We need pointwise variance upper bounds for $M_x$ and $F_x$. For $M_x$ this uses the same ideas as for the bound \eqref{eq:pointwise-var}. For $F_x$, note that we have the decomposition in independent terms $h_{\eps}(x) - h_{(\eps,\infty)}(x)  = \langle h_{(0,\eps)}, \rho_{\eps}^x \rangle + \langle h_{(\eps,\infty)}, \rho_{\eps}^x - \delta_x \rangle$. Now, with $\rho_{\eps}^x(y) := \frac{1}{\pi} \sum_{k = 1}^{\eps^{-1}} \cos(k(x-y))$, $h_{(0,\eps)}$ is essentially orthogonal in $L^2$ to $\rho_{\eps}^x(y)$ and  $\delta_x(y)-\rho_{\eps}^x(y)  = \frac{1}{\pi} \sum_{k > \eps^{-1}} \cos(k(x-y))$ to  $h_{(\eps,\infty)}$ and one can get a polynomial upper bound on the variance.

\subsection{Comparison with the QLE generator}
\label{sec:comparison-qle}

The article \cite{MS16} develops a formal SPDE satisfied by the QLE process, using the Brownian motions driving the SLEs. We introduce the notation used by the authors and translate their results in our notation to compare the generator. They work with a boundary probability measure $\nu_t$ instead of $\mu_t$ (a general Borel measure without mass constraint) and with a normalization of harmonic functions such that  $h_t(0) = 0$. They argue that the dynamics of the harmonic functions $(h_t)$ are formally given by 
\begin{equation}
\label{eq:QLE-MS-dynamics}
\dot{h}_t(z) = \int_{\mb{U}} \left( D_t(z,u) + \mc{P}^{\star}(z,u) W(t,u) \right) d\nu_t(u)
\end{equation}
where
\begin{align*}
D_t(z,u) & = - \nabla h_t(z) \cdot \Phi(u,z) + \frac{1}{\sqrt{\kappa}} \mc{P}^{\star}(z,u) + Q (\partial_{\theta} \overline{\mc{P}})(z,u) \\
& =  - \nabla h_t(z) \cdot \Phi(u,z) + \xi \mc{P}^{\star}(z,u) + ( 2\xi+ \frac{1}{2\xi}) (\partial_{\theta} \overline{\mc{P}})(z,u)
\end{align*}
where $\Phi(u,z) = -z \frac{z+u}{z-u}$, for $a,b \in \mb{C} = \mb{R}^2$, $a \cdot b = \Re(\bar{a} b),$ so $\nabla h_t(z) \cdot \Phi (u,z) = 2 \overline{\partial_{z} h_t} \cdot \Phi (u,z) = 2 \Re ( \Phi(u,z) \partial_z h_t )$ and
\begin{equation}
D_t(z,u) =  - 2 \Re ( \Phi(u,z) \partial_z h_t ) + \xi \mc{P}^{\star}(z,u) + ( 2\xi+ \frac{1}{2\xi}) (\partial_{\theta} \overline{\mc{P}})(z,u)
\end{equation}
Above $\mc{P}$ is $2\pi$ times the Poisson kernel on $\mb{D}$, i.e. $\mc{P}(z,w) = 2\pi H(z,w) = \Re \left( \frac{w+z}{w-z} \right)$, $\overline{\mc{P}}$ is $2\pi$ the conjugate Poisson kernel on $\mb{D}$,  i.e.  $\overline{\mc{P}}(z,w) = \Im \left( \frac{w+z}{w-z} \right)$ so $\partial_{\theta} \overline{\mc{P}}(z,w) = \Re \left( \frac{-2zw}{(w-z)^2} \right)$ and $\mc{P}^{\star} = \mc{P}-1$ so $\mc{P}^{\star}(0,w) = 0$ for all $w \in \mb{U}$. Finally, $W$ is a space-time white noise on $\mb{U} \times [0,\infty)$. Furthermore, above we already specified $\xi = \frac{1}{\sqrt{\kappa}} = \frac{1}{\sqrt{6}}$ and $Q = (2\xi)^{-1}+2\xi = \frac{5}{\sqrt{6}}$.

\bigskip

We look at $d \int h_t f d\lambda$ for $f \in C_c^{\infty}(\mb{D})$. By Fubini, the part without white-noise contains three terms: for the first one, we integrate it over $d\nu$,
$$
(i)=- 2 \iint  \Re ( \Phi(u,z) \partial_z h_t ) f(z) dz \nu(du)  = - 2 \iint  h_t(z) \Re (  \partial_z  \left( \Phi(u,z) f(z) \right) dz  \nu(du)  = \int h_t(z) (D_{\nu}f)(z) dz
$$
using Fubini and recalling the expression \eqref{eq:dmu-expressions} of $D_\nu f$ for the last equality.
$$
(ii)=\xi \int \mc{P}^{\star}(z,u) f(z) dz = \xi \int (\mc{P}(z,u)-1) f(z) dz = 2\pi \xi \int H(z,u) f(z) dz - \xi \int f d\lambda = 2\pi \xi ( f^{*}(u) - \dashint f d\lambda )
$$
$$ 
(iii)=Q \int \partial_{\theta} \overline{\mc{P}}(z,u)  f(z) dz = Q \int \Re \left( \frac{-2zw}{(w-z)^2} \right) f(z) dz = 2 \pi Q (\partial_n H f^*)(u)
$$
using \eqref{eq:intermediate-qle} for the last equality.

The part with white-noise becomes
$$
\int \mc{P}^{\star}(z,u) W(t,u) f(z) dz  =  2\pi  (f^{*}(u)   -  \dashint f d\lambda ) W(t,u)
$$

For $F(h) = \int h f d\lambda = \int H h f d\lambda = \int h f^* d\lambda_{\partial}$, $DF = f^*$,
\begin{align*}
d \int h_t f d\lambda = &  \int h_t(z) (D_{\nu_t}f)(z) dz + 2\pi Q \int \partial_n H f^*  d\nu_t + 2 \pi \xi \int (f^*-\dashint f^* d\lambda ) d\nu_t   \\
& +  2\pi \xi \int ( f^{*} - \dashint f^* d\lambda  )W_t d\nu_t  \\
= &   \mc{L} F(h_t) dt + d \mathrm{Mart}(f)
\end{align*}
where $\mc{L}F(h) = \int h (D_{\nu}f) d\lambda + 2\pi Q \int \partial_n H DF d\nu + 2\pi \xi \int (DF - \dashint DF) d\nu $

Now, we compute $\mc{L}F$ for $F = \varphi (\int h f_1 d\lambda, \dots, \int h f_n d\lambda)$. It is an application of It\^o's formula. The quadratic variation is given by
$$
d \langle \mathrm{Mart}(f), \mathrm{Mart}(g) \rangle_t = (2\pi)^2 \int (f^*-\dashint f^*)(g^*-\dashint g^*) d\nu_t
$$
so, writing $p_i = f_i^*$, 
$$
\mc{L}F(h)  = \sum_{i} b(p_i) \partial_i \varphi + \frac{1}{2} \sum_{i,j} \sigma(p_i, p_j) \partial_{i,j} \varphi 
$$
where
$$
b(p) =  \int h(z) (D_{\nu}f)(z) dz + 2\pi Q \int \partial_n H p  d\nu + 2 \pi \xi \int (p-\dashint p d\lambda ) d\nu
$$
and $\sigma(p,q) = (2\pi)^2 \int (p-\dashint p)(q-\dashint q) d\nu$.

\bibliographystyle{abbrv}
\bibliography{biblio}

\end{document}